
\documentclass[12pt,reqno]{amsart}

\usepackage{setspace}
\setlength{\textwidth}{16truecm} 
\setlength{\textheight}{22.2truecm}

\usepackage{color}
\usepackage[all]{xy}
\usepackage{amssymb,amsmath, amsfonts, amsthm}

\usepackage{enumitem}

\setlength{\parskip}{.1em}
 
\calclayout

\usepackage[latin5]{inputenc}



\newcommand{\Qq}{\mathbb Q}
\newcommand{\Zz}{\mathbb Z}
\newcommand{\Ff}{\mathbb F}

\newcommand{\squ}{\preceq}
\newcommand{\nsub}{\trianglelefteq}
\newcommand{\nor}{\trianglelefteq}

\newcommand{\ext}{{\rm Ext}}
\newcommand{\des}{{\rm Des}}
\newcommand{\krn}{{\rm Ker}}
\newcommand{\res}{{\rm Res}}
\newcommand{\defres}{{\rm Defres}}

\def\obs{{\rm Obs}}
\def\lin{{\rm Lin}}
\def\rk{{\rm rk}}

\def\ind{{\rm Ind}}
\def\hom{{\rm Hom}}
\def\aut{{\rm Aut}}
\def\coind{{\rm Coind}}
\def\rc{{\rm c}}
\def\defl{{\rm Def}}
\def\infl{{\rm Inf}}
\def\iso{{\rm Iso}}
\def\St{{\rm St}}

\def\cB{\mathcal{B}}
\def\cD{\mathcal{D}}
\def\cA{\mathcal{A}}
\def\cE{\mathcal{E}}
\def\cF{\mathcal{F}}
\def\cS{\mathcal{S}}
\def\cO{\mathcal{O}}
\def\cC{\mathcal{C}}
\def\cX{\mathcal{X}}
\def\cG{\mathcal{G}}


\newtheorem{thm}{Theorem}[section]
\newtheorem{pro}[thm]{Proposition}
\newtheorem{cor}[thm]{Corollary}
\newtheorem{lem}[thm]{Lemma}

\theoremstyle{definition}
\newtheorem{rem}[thm]{Remark}

\newtheorem{defn}[thm]{Definition}
\newtheorem{ex}[thm]{Example}

\def\maprt#1{\smash{\,\mathop{\longrightarrow}\limits^{#1}\,}}
\newcommand{\leftexp}[2]{{\vphantom{#2}}^{ #1}{\hskip-1pt#2}}


\begin{document}

\title{Obstructions for gluing biset functors}
\author[O. Co\c skun]{Olcay Co\c skun}
\address{Department of Mathematics, Bo\u gazi\c ci University, 
34342 Bebek, \. Istanbul, Turkey}
\email{olcay.coskun@boun.edu.tr}
\author[E. Yal\c{c}{\i}n]{Erg\"un Yal\c{c}{\i}n}
\address{Department of Mathematics, Bilkent University,
 06800 Bilkent, Ankara, Turkey}
\email{yalcine@fen.bilkent.edu.tr }
\subjclass[2010]{} 
 
 \keywords{}

\date{\today}

\begin{abstract}  We develop an obstruction theory for the existence and uniqueness of a solution to the gluing problem
for a biset functor defined on the subquotients of a finite group $G$.  The obstruction groups for this 
theory are the reduced cohomology groups of a category $\cD^*_G$ whose objects are the sections $(U,V)$ 
of $G$, where $1\neq V\trianglelefteq U \leq G$, and whose morphisms are defined as a generalization of morphisms in the orbit 
category. Using this obstruction theory, we calculate the obstruction group for some well-known $p$-biset functors, 
such as the Dade group functor defined on $p$-groups with $p$ odd.
\end{abstract}

\maketitle


\section{Introduction and definitions}


Given a commutative ring $R$ with unity, the $R$-biset category 
$R\cB$ is a category whose objects are all finite groups, and in which the morphisms from the finite 
group $H$ to the finite group $K$ are given by the Burnside module $R \otimes _{\mathbb Z} B(K,H)$ 
of $(K,H)$-bisets, or equivalently of left $K\times H$-sets. The composition in the biset category is given by the composition product of bisets 
(see Section \ref{sect:DestOrbit} for details).  An $R$-linear functor $$F: R \cB \rightarrow R\text{-mod}$$ is 
called a \emph{biset functor} over $R$. The Burnside ring 
functor $B$ and the representation ring functor $R_K$ defined on finite groups, and the Dade group 
$D$ defined on $p$-groups with $p$ odd, are important examples of biset functors.   

Let $G$ be a finite group. A pair $(U,V)$ of subgroups of $G$ is called a \emph{section} 
of $G$ if $V \nsub U \leq G$. The quotient group $U/V$ is called a \emph{subquotient} of $G$. 
In the literature sometimes these two terms are used in an equivalent way. Our convention will be as follows: 
Let $\cX ^s$ denote the 
set of all quotient groups $U/V$ indexed by the set of sections $(U,V)$ of $G$. We denote the $R$-biset category 
over the collection $\cX^s$ by $R\cB _G$. An $R$-linear functor 
$F: R \cB_G \rightarrow R$-mod is called a \emph{biset functor for $G$ over $R$}.

Every $(K, H)$-biset can be expressed as a composition of five types of bisets, called induction, inflation,
isogation, deflation, and restriction bisets. Descriptions of these bisets can be found in \cite[2.3.9]{Bouc-Book} 
(isogation is a biset associated to an isomorphism, see  \cite[pg. 3811]{Barker-Rhetorical}). 
For $V \nsub U \leq G$, the composition of deflation and restriction maps  $\res^G _U$ and $\defl ^U _{U/V}$ 
is often denoted by $\defres^G _{U/V}$ or $\des^G _{U/V}$, and is called the \emph{destriction biset}. 
Similar to biset functors, one can also define a notion of \emph{destriction functor}, which uses only the 
destriction maps and isogations induced by conjugations. Every biset functor can be considered as a destriction functor 
via the algebra map from the destriction algebra to the alchemic algebra over the collection $\cX^s$ (see Section \ref{sect:destriction} for details).
 
For every $K \nor H \leq G$, the normalizer of the section $(H,K)$ is defined 
as the subgroup $N_G(H, K)=N_G(H) \cap N_G(K)$. 

\begin{defn}\label{def:GluingProblem} Let $F$ be a biset functor or a destriction functor for $G$ over $R$. 
A {\bf gluing data} for $F(G)$ is 
a sequence $(f_H)_{1<H \le G}$ of elements $f_H \in F(N_G(H)/H)$ satisfying the following 
compatibility conditions:
\begin{enumerate}
  \item[(i)] (Conjugation invariance) For any $g\in G$ and $H \le G$, we have $$\leftexp{g} f_H = f_{\leftexp{g} H}.$$
  \item[(ii)] (Destriction invariance) For any $K \nor H \leq G$, we have 
  $$\defres^{N_G(K)/K}_{N_G(H, K)/H }f_K = \res^{N_G(H)/H }_{N_G(H, K)/H}f_H. $$
\end{enumerate}

The {\bf gluing problem} for the biset functor $F$ at $G$ is the problem of finding an element $f\in F(G)$ 
such that for any non-trivial subgroup $H$ of $G$, we have $\defres^G_{N_G(H)/H}f = f_H.$ If such an 
element exists, we call it a solution to the given gluing data. 
\end{defn}

The gluing problem was first introduced for endo-permutation modules by Bouc and Th\'{e}venaz 
\cite{BoucThev-Glue}.  When a solution to a gluing problem exists we would like to also know if it is unique. 
Following Bouc and Th\'{e}venaz \cite{BoucThev-Glue}, we denote the set of all gluing data for $F(G)$ 
as the inverse limit  $$\varprojlim_{1<H\le G}F(N_G(H)/H ).$$ Later we show that the gluing data is indeed 
an inverse limit over a category. The complete solution to the gluing problem is an exact sequence
$$
\xymatrix{0 \ar[r] & \krn (r^F_G) \ar[r] & F(G) \ar[r]^-{r^F_G} & \varprojlim \limits_{1<H\le
G}F(N_G(H)/H)  \ar[r] &  \obs(F(G)) \ar[r] & 0 } 
$$
where $r^F_G$ denotes the map which takes $f \in F(G)$ to the tuple $(\defres^G_{N_G(H)/H} f)_{1< H \leq G}$
and  $\obs (F(G))$ denotes the group of obstructions for a gluing data to have a solution. 
 
Bouc and Th\' evenaz \cite[Theorem 1.1]{BoucThev-Glue} calculated the obstruction group for the torsion 
part of the Dade group $D_t (G)$ when $G$ is a noncyclic $p$-group with $p$ odd. They found that
$$ \obs(D_t (G))\cong  H^0 (\cA_{\geq 2} (G) ; \Ff _2 ) ^G$$
where $\cA _{\geq 2} (G)$ is the poset of elementary abelian subgroups of $G$ with rank $\geq 2$. Later 
Bouc \cite[Theorem 2.15]{Bouc-Glue} showed that the obstruction group $\obs(D(G))$ for the Dade group 
$D(G)$ of a $p$-group $G$, with $p$ odd, embeds into the group 
$H^1 (\cA_{\geq 2} (G) ; \Zz) ^{(G)}$ defined in \cite[Notation 2.9]{Bouc-Glue} using $G$-invariant cocycles. 
We show in Lemma \ref{lem:CohIsom}
that this group is isomorphic to the first cohomology 
group $H^1 ( \cA_{\geq 2} (G) /G ; \Zz )$
of the orbit space $\cA_{\geq 2} (G)/G$.

Co\c skun \cite{Coskun-GlueBS} extended these calculations to the biset functor for the dual of the rational 
representation ring $R_{\Qq}^*$ and to the biset functor of Borel-Smith functions $C_b$. In all these calculations 
the results are expressed as modified versions of  cohomology groups for some topological space. Our aim in this 
paper is to give a common framework for all these obstruction group calculations by expressing the obstruction group 
for a biset functor $F$ as the 0-th reduced cohomology group of a category $\cD_G$, which we introduce below. 
Using this theory we also obtain new results for the obstruction groups of certain $p$-biset functors  such as 
the Dade group functor.
 
The set of sections of $G$ is a $G$-poset under the conjugation action  $(U,V) \to ( \leftexp{g} U, \leftexp{g}V)$ 
of $G$, where  $\leftexp{g}U$ denotes the subgroup $gUg^{-1}$. The order relation is given by the inclusion of sections,
that is, $(U,V) \squ (M,L)$ if and only if $L\le V\le U\le M$. We call a set of sections of $G$ a \emph{collection}
if it is closed under conjugation.

\begin{defn}\label{def:OrbCatSec} Let $\mathcal C$ be a collection of sections of a finite group $G$. The \textbf{orbit category of sections} 
$\cD^{\mathcal C} _G$ is the category whose objects are sections $(U, V)$ in $\mathcal C$, and whose  morphisms 
are given by  $$\hom _{\cD_G^{\mathcal C} } ((U,V) , (M,L))=\{ Mg \ | \  g\in G, \ \leftexp{g} (U, V) \squ (M, L) \}.$$
If $Nh: (M, L) \to (N, R)$ is a morphism in $\cD_G^\mathcal C$, then the composition is given by $Nh\circ Mg =Nhg$.  
We denote the orbit category of sections over all sections of $G$ simply by $\cD_G$.
\end{defn}
 
Note that the composition defined above is well-defined: If $g'=xg$ and $h'=yh$ for some $x\in M$ and $y\in N$, then
$Nh'g'=Nhxg=Nhg$ because $hxh^{-1} \in \leftexp{h} M \leq N$. We show that the category algebra for the opposite category of
category $\cD_G ^{\cC}$ is isomorphic to destriction algebra (see Definition \ref{pro:DesAlg} and Proposition \ref{pro:Equivalence}).
Using the algebra map from the destriction algebra to the alchemic 
algebra, we can consider a biset functor $F$ as an $R\cD^{\cC}_G$-module and study the gluing problem for $F$ using homological 
algebra over $R\cD^{\cC}_G$-modules. To simplify the notation we write $F(U/V)$ for $F\bigl ( (U,V) \bigr ) $ when $F$ 
is an $R\cD_G$-module. We also write $F(G)$ for $F(G/1)$ to simplify the notation further.

Given a collection of sections $\cC$ of $G$, closed under taking subsections, let $T^{\cC}$ denote the 
$R\cD_G$-module whose values on those sections $(U, V)\not \in \cC$ are equal to $R$, and equal 
to zero on the sections in $\cC$. Let $\underline{R}$ denote the constant functor of $\cD_G$ and $J^{\cC}$ denote 
the $R\cD _G$-module such that $0 \to J^{\cC} \to \underline{R} \to T^{\cC} \to 0$ is an exact sequence.  
For a destriction functor or a biset functor $F$, this gives a long exact sequence  
\[
\xymatrix{0 \ar[r] & \hom_{R\cD_G} (T^{\cC}, F)  \ar[r] & F(G) \ar[r]^-{r^F_G} & \varprojlim \limits _{(U,V) \in 
\cD^{\cC}_G} F(U/V)  \ar[r] & \ext{}^1_{R\cD_G} (T ^{\cC} ,F)  \ar[r] & 0},
\] 
which can be regarded as a general version of a gluing problem for a collection  $\cC$ that is closed under taking 
subsections. The obstruction group for this gluing problem over $\cC$ is given by 
$\obs ^{\cC} (F) \cong \ext{}^1_{R\cD_G} (T ^{\cC} ,F)$.

We show that the gluing problem for $F$ stated in Definition \ref{def:GluingProblem} is a special case of 
this more general gluing problem over the category $\cD_G^{\cC}$. 

\begin{thm}\label{thm:Main1} 
Let $F$ be a destriction functor or a biset functor for  a finite group $G$ defined over $R$. Let $\cD_G^*$ denote 
the orbit category of sections over the collection of all sections $(U,V)$ of $G$ with $V\neq 1$, and  
$T$ the constant functor on the set of all sections $(U,V)$ with $V=1$. 
Then there is an isomorphism 
$$\varprojlim \limits_{1<H\le
G}F(N_G(H)/H) \cong \varprojlim \limits _{(U,V) \in \cD^*_G} F(U/V),$$ which gives isomorphisms
\[\obs(F(G)) \cong \ext_{R\cD_G} ^1 (T , F)   \text{         and         } \ 
\krn (r^F_G) \cong \hom_{R\cD_G}(T,F).\] 
\end{thm}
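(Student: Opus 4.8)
The plan is to prove the displayed isomorphism of inverse limits by producing an explicit pair of mutually inverse $R$-linear maps $\Phi,\Psi$, and then to deduce the assertions about $\obs$ and $\krn$ by comparing two four-term exact sequences. I will use throughout that a morphism $Mg\colon (U,V)\to(M,L)$ of $\cD_G$ acts on $F$ by a composite $F(Mg)\colon F(M/L)\to F(U/V)$ of isogation, restriction and deflation maps determined by $\leftexp{g}{(U,V)}\squ(M,L)$, and that a tuple $(x_{(U,V)})$ lies in $\varprojlim_{(U,V)\in\cD^*_G}F(U/V)$ precisely when $F(Mg)(x_{(M,L)})=x_{(U,V)}$ for every such morphism.

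The map $\Phi$ would send $(x_{(U,V)})$ to $(x_{(N_G(H),H)})_{1<H\le G}$; this makes sense since $(N_G(H),H)\in\cD^*_G$ whenever $H\neq 1$. That the image obeys condition~(i) of Definition~\ref{def:GluingProblem} is just compatibility of $(x_{(U,V)})$ with the isogation isomorphisms $(N_G(H),H)\to(N_G(\leftexp{g}{H}),\leftexp{g}{H})$ in $\cD^*_G$. For condition~(ii), with $1\neq K\nor H\le G$, the section $(N_G(H,K),H)$ lies in $\cD^*_G$ and admits inclusion-of-section morphisms onto $(N_G(H),H)$ and $(N_G(K),K)$ in $\cD^*_G$, along which $F$ acts by $\res^{N_G(H)/H}_{N_G(H,K)/H}$ and $\defres^{N_G(K)/K}_{N_G(H,K)/H}$ respectively; equality of the two induced values of $x_{(N_G(H,K),H)}$ is exactly condition~(ii).

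For $\Psi$ the structural point is that a section $(U,V)$ with $V\neq 1$ automatically satisfies $U\le N_G(V)$, so there is a canonical morphism $\iota_{(U,V)}\colon(U,V)\to(N_G(V),V)$ in $\cD^*_G$ with $F(\iota_{(U,V)})=\res^{N_G(V)/V}_{U/V}$. I would set $\Psi(f)_{(U,V)}:=\res^{N_G(V)/V}_{U/V}f_V$, which is legitimate because $V\neq 1$. The substantial step is checking compatibility with an arbitrary morphism $Mg\colon(U,V)\to(M,L)$ of $\cD^*_G$: writing $\sigma=\leftexp{g}{V}$ and $\tau=\leftexp{g}{U}$, so that $1\neq L\nor\sigma\nor\tau$ with $\tau\le M\le N_G(L)$ and $\tau\le N_G(\sigma)$, and using~(i) to replace $\leftexp{g}{f_V}$ by $f_\sigma$, the compatibility condition reduces by the usual bookkeeping of composites of restriction, deflation and isogation maps to the single identity
\[
\res^{N_G(\sigma)/\sigma}_{\tau/\sigma}\,f_\sigma \;=\; \defres^{N_G(L)/L}_{\tau/\sigma}\,f_L .
\]
This in turn follows by applying $\res^{N_G(\sigma,L)/\sigma}_{\tau/\sigma}$ to condition~(ii) for the section $(\sigma,L)$, which is available since $\tau\le N_G(\sigma)\cap N_G(L)=N_G(\sigma,L)$, and composing. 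That $\Phi$ and $\Psi$ are mutually inverse is then immediate: $\Phi\Psi(f)_H=\res^{N_G(H)/H}_{N_G(H)/H}f_H=f_H$, and $\Psi\Phi(x)_{(U,V)}=\res^{N_G(V)/V}_{U/V}\,x_{(N_G(V),V)}=F(\iota_{(U,V)})(x_{(N_G(V),V)})=x_{(U,V)}$ by compatibility of $(x_{(U,V)})$ with $\iota_{(U,V)}$.

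Finally, I would take $\cC$ to be the collection of all sections $(U,V)$ with $V\neq 1$. It is closed under taking subsections, and for it $T^{\cC}=T$ and $\cD^{\cC}_G=\cD^*_G$, so the long exact sequence displayed before the theorem becomes $0\to\hom_{R\cD_G}(T,F)\to F(G)\xrightarrow{r^F_G}\varprojlim_{(U,V)\in\cD^*_G}F(U/V)\to\ext{}^1_{R\cD_G}(T,F)\to 0$, whose map $r^F_G$ sends $f$ to $(\defres^G_{U/V}f)$; placing this against the complete-solution sequence $0\to\krn(r^F_G)\to F(G)\xrightarrow{r^F_G}\varprojlim_{1<H\le G}F(N_G(H)/H)\to\obs(F(G))\to 0$ and noting that $\Phi(r^F_G f)_H=\defres^G_{N_G(H)/H}f$, the isomorphism $\Phi$ identifies the two sequences via the identity on $F(G)$, so their kernels and cokernels agree: $\krn(r^F_G)\cong\hom_{R\cD_G}(T,F)$ and $\obs(F(G))\cong\ext{}^1_{R\cD_G}(T,F)$. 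I expect the main obstacle to be the well-definedness of $\Psi$ — identifying the precise composite of isogation, restriction and deflation maps that a general morphism of $\cD^*_G$ represents and running the Mackey-type reduction to one instance of~(ii); the hypothesis $V\neq 1$ built into $\cD^*_G$ is exactly what keeps $f_L$ and $f_\sigma$, hence the whole argument, defined.
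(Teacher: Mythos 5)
Your proposal is correct and follows essentially the same route as the paper: your $\Phi$ and $\Psi$ are exactly the paper's maps $\phi$ and $\psi$ from Proposition \ref{pro:isom2} (read through the identification of $\varprojlim_{(U,V)\in\cD^*_G}F(U/V)$ with $\hom_{R\cD_G}(J,F)$), including the key reduction of compatibility with a general morphism to the single identity $\res^{N_G(V)/V}_{U/V}f_V=\des^{N_G(L)/L}_{U/V}f_L$ obtained from destriction invariance via $U\le N_G(V)\cap N_G(L)=N_G(V,L)$. The final comparison of the two four-term exact sequences is the paper's Proposition \ref{pro:main1} (five-lemma argument).
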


This theorem is proved in Section \ref{sect:ObsGluing}  as Propositions \ref{pro:isom2}  and \ref{pro:main1}. 
In Section \ref{sect:Reduction}, we discuss the gluing problem for an arbitrary collection $\cC$ (not necessarily 
closed under taking subsections). We define the reduced cohomology $\widetilde H^* (\cD_G^{\cC} ; F) $ of 
the category $\cD_G^{\cC}$ and show that if the collection $\cC$ is closed under taking subsections, then 
$$\obs^{\cC} ( F(G)) \cong \ext^1 _{\cD_G } (T^{\cC} , F) \cong \widetilde H^0 (\cD_G ^{\cC} ; F).$$

To calculate these reduced cohomology groups, we apply a theorem of Jackowski and Slominska \cite{JackSlom} 
on isotropy presheaves to the category $\cD_G^{\cC}$ and prove that under certain conditions the collection $\cC$ 
can be replaced by a smaller collection $\cC'$ with isomorphic reduced cohomology group. When $G$ is a $p$-group, 
we show that the collection of all sections $(U,V)$ of $G$ with $V\neq 1$ can be reduced to the collections of all sections 
of the form $(C_G(E), E)$ where $E$ is a nontrivial elementary abelian subgroup of $G$. 

This reduction allows us to relate the obstruction groups $\obs (F(G))$ to the cohomology groups of the Quillen category 
$\cA_p (G)$, the category whose objects are the nontrivial elementary abelian $p$-subgroups of $G$ and morphisms
$E_1 \to E_2$ are given by compositions of conjugation and inclusion maps. Given a destriction or biset functor $F$, there is an associated 
$R\cA_p(G)$-module $\overline F$ defined by $\overline F (E)=F(C_G(E)/E)$ for every $E\in \cA_p (G)$. 
We prove the following theorem.  

\begin{thm}\label{thm:Main2} 
Let $G$ be a $p$-group and $\cA_p (G)$ denote the Quillen category of $G$ over all  nontrivial elementary abelian 
$p$-subgroups of $G$. Then, for a destriction functor or a biset functor $F$, there is an isomorphism 
$$\widetilde H^* ( \cD^* _G  ; F) \cong \widetilde H^* (\cA_p (G) ; \overline F ).$$  
In particular,  $\obs(F (G)) \cong \widetilde H^0 (\cA_p (G) ; \overline F)$.
\end{thm}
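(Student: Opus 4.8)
The plan is to reduce Theorem \ref{thm:Main2} to an application of the Jackowski--Slomi\'nska isotropy presheaf theorem, combined with the reduction of collections developed in Section \ref{sect:Reduction}. By Theorem \ref{thm:Main1} we already know that $\obs(F(G))\cong\widetilde H^0(\cD_G^*;F)$, where $\cD_G^*$ is the orbit category of sections $(U,V)$ with $V\neq 1$; so it suffices to establish the general isomorphism $\widetilde H^*(\cD_G^*;F)\cong\widetilde H^*(\cA_p(G);\overline F)$ in all degrees, and then specialize to degree $0$. The overall strategy is therefore: (1) identify a subcollection $\cC'$ of the collection $\cC$ of all sections $(U,V)$ with $V\neq 1$ for which the reduction theorem of Section \ref{sect:Reduction} applies, showing $\widetilde H^*(\cD_G^{\cC};F)\cong\widetilde H^*(\cD_G^{\cC'};F)$; (2) take $\cC'$ to be the collection of sections of the form $(C_G(E),E)$ with $E$ a nontrivial elementary abelian $p$-subgroup, as announced in the paragraph preceding the theorem; and (3) exhibit an equivalence (or at least a cohomology isomorphism) between the orbit category of sections $\cD_G^{\cC'}$ and the Quillen category $\cA_p(G)$, carrying the restriction of $F$ to the module $\overline F$ defined by $\overline F(E)=F(C_G(E)/E)$.

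For step (1), the key is to verify the hypotheses of the Jackowski--Slomi\'nska theorem for the $G$-poset of sections: one needs the relevant fixed-point subposets (for each subgroup, or each relevant section) to be contractible or otherwise cohomologically trivial so that the inclusion $\cC'\hookrightarrow\cC$ induces an isomorphism on the reduced cohomology with coefficients in $F$. Concretely, for a section $(U,V)$ with $V\neq 1$, one wants to show that the subposet of sections of the form $(C_G(E),E)$ lying below (or above) $(U,V)$, together with the appropriate isotropy data, has trivial reduced cohomology; the natural candidate is to use that $V$ contains a nontrivial elementary abelian subgroup $E$ (take $E=\Omega_1(Z(V))$ or similar) and that the assignment $(U,V)\mapsto$ such an $E$ gives a homotopy between the two collections. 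Here the $p$-group hypothesis is essential, since it guarantees every nontrivial normal subgroup meets the center nontrivially, producing the required elementary abelian sections. This contractibility/fixed-point argument is the main obstacle: one must carefully set up which $G$-poset the Jackowski--Slomi\'nska machine is applied to, check that $F$ (regarded via the destriction algebra as a module over the category algebra) behaves as a presheaf with the needed acyclicity, and confirm that the reduction does not disturb the coefficient system, i.e.\ that $F$ restricted to $\cC'$ corresponds to $\overline F$.

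For step (3), I would construct a functor $\Phi:\cA_p(G)\to\cD_G^{\cC'}$ on objects by $E\mapsto(C_G(E),E)$, and on morphisms by sending a conjugation-composed-with-inclusion map $c_g\colon E_1\to E_2$ (so $\leftexp g E_1\le E_2$) to the morphism $C_G(E_2)g\colon(C_G(E_1),E_1)\to(C_G(E_2),E_2)$ in $\cD_G^{\cC'}$; one checks this is well-defined using $\leftexp g C_G(E_1)=C_G(\leftexp g E_1)\ge C_G(E_2)$, so that $\leftexp g(C_G(E_1),E_1)\squ(C_G(E_2),E_2)$ as required by Definition \ref{def:OrbCatSec}. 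The essential point is that $\Phi$ is essentially surjective by the definition of $\cC'$ and an equivalence of categories (or induces an equivalence after passing to the relevant localization), because a morphism $C_G(E_2)g\colon(C_G(E_1),E_1)\to(C_G(E_2),E_2)$ forces $\leftexp g E_1\le E_2$ and $\leftexp g C_G(E_1)\ge C_G(E_2)$; conjugation maps in $\cA_p(G)$ are recovered precisely from these, and the fact that $E\mapsto C_G(E)$ is order-reversing while $E\mapsto E$ is order-preserving makes the hom-sets match up. Under this equivalence, the pullback of the coefficient functor $F|_{\cD_G^{\cC'}}$ is by definition $E\mapsto F(C_G(E)/E)=\overline F(E)$, so $\widetilde H^*(\cD_G^{\cC'};F)\cong\widetilde H^*(\cA_p(G);\overline F)$. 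Combining (1)--(3) with Theorem \ref{thm:Main1} gives $\obs(F(G))\cong\widetilde H^0(\cA_p(G);\overline F)$, completing the proof.
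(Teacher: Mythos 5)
Your overall route is the one the paper takes: a Jackowski--Slomi\'nska reduction from the collection of all sections $(U,V)$ with $V\neq 1$ down to the collection $\cC^c=\{(C_G(E),E)\}$, followed by an identification of $\cD_G^c$ with the Quillen category. (The paper does the reduction in two stages, Propositions \ref{pro:CentralRef} and \ref{pro:CentralRef2}; a one-step reduction also works.) However, two of your verifications are wrong as stated. In step (1), the contractibility witness must centralize all of $U$, not just $V$: for $w=(U,V)$ the poset $w\backslash i$ consists of the sections $(C_G(E),E)$ with $E\le V\le U\le C_G(E)$, so the candidate subgroup must lie in $\Omega_1(Z(U))\cap V$, and $\Omega_1(Z(V))$ need not qualify. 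The correct witness $Z=\Omega_1(Z(U))\cap V$ is nontrivial because $V$ is a nontrivial normal subgroup of the $p$-group $U$, and $(C_G(Z),Z)$ is a minimum of $w\backslash i$ since every $E$ occurring there satisfies $E\le Z$ and hence $C_G(Z)\le C_G(E)$.

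The more serious problem is the variance in step (3): $\cA_p(G)$ is isomorphic to the \emph{opposite} of $\cD_G^c$, not to $\cD_G^c$ itself, and your covariant functor $\Phi$ is not well defined. For $C_G(E_2)g$ to be a morphism $(C_G(E_1),E_1)\to(C_G(E_2),E_2)$, Definition \ref{def:OrbCatSec} requires $\leftexp{g}{(C_G(E_1),E_1)}\squ(C_G(E_2),E_2)$, which unpacks to $E_2\le\leftexp{g}{E_1}$ and $\leftexp{g}{C_G(E_1)}\le C_G(E_2)$ --- the reverse of what a Quillen morphism ($\leftexp{g}{E_1}\le E_2$) provides, and the containment $\leftexp{g}{C_G(E_1)}\ge C_G(E_2)$ you check is the wrong direction; such a morphism exists only when $\leftexp{g}{E_1}=E_2$. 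The repair is the paper's: send $gC_G(E_1)\in\hom_{\cA_p}(E_1,E_2)$ to $C_G(E_1)g^{-1}\in\hom_{\cD_G}\bigl((C_G(E_2),E_2),(C_G(E_1),E_1)\bigr)$, giving an isomorphism $\cA_p(G)\cong\overline{\cD}_G^c$. This anti-isomorphism is exactly what is needed, since $F$ is a contravariant functor on $\cD_G^c$ and therefore corresponds to the covariant functor $\overline F$ on $\cA_p(G)$; but as written your well-definedness check fails and the variance must be corrected before the cohomology identification goes through.
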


As a consequence of Theorem \ref{thm:Main2} and a theorem of Oliver \cite[Theorem 1]{Oliver}, we also 
obtain that if $G$ is a $p$-group then $$\ext_{R\cD_G} ^n (T , F) \cong \widetilde H^{n-1} (\cD_G ^* ; F)=0$$ for $n \geq \rk (G)+1$, where 
$\rk (G)$ denotes the maximal rank of elementary abelian subgroups in $G$ (see Proposition \ref{pro:Oliver}). 
 
In Section \ref{sect:Rhetorical} we consider rhetorical $p$-biset functors and give an explicit formula for the 
obstruction groups $\obs (F(G))$ for these functors (see Theorem \ref{thm:Rhetorical}). In Section \ref{sect:GluingEndo}, 
we apply this formula to gluing problems for various rhetorical $p$-biset functors. For the torsion part of the Dade group 
$D_t$ defined on $p$-groups with $p$ odd, we recover the earlier obstruction group computations done by Bouc 
and Th\'{e}venaz \cite{BoucThev-Glue}. We also recover the obstruction group calculations done 
by Co\c skun \cite{Coskun-GlueBS} for the dual of the rational representation ring functor $R_{\Qq}^*$ and for the 
Borel-Smith functor $C_b$ (Propositions \ref{pro:ObsR_Q^*} and \ref{pro:ObsCb}). We also calculate the obstruction 
group $\obs(D_t(G))$ for a $2$-group $G$ (see Proposition \ref{pro:ObsDt}).

In Section \ref{sect:GluingEndo} we consider the obstruction group for the Dade group functor $D$ defined over 
$p$-groups with $p$ odd. We first prove a vanishing result  $H^1 (\cD^*_G; B^*)=0$ for the dual of 
the Burnside ring functor $B^*$ using a base change spectral sequence for ext-groups (see Lemma 
\ref{lem:Vanish}). As a consequence of this vanishing result  we prove the following theorem.

\begin{thm}\label{thm:Main3} 
Let $G$ be a $p$-group with $p$ odd. Let $D$ denote the $p$-biset functor for the Dade group. 
Then there is an exact sequence  of abelian groups
$$ 0 \to \obs(D(G)) \to H^1 (\cA_{\geq 2} (G) /G ; \Zz) \to H^1 (\cA_{\geq 2} (G)/G; \Zz/2),$$
where the second map is induced by the mod $2$ reduction map $\Zz \to \Zz/2$.
\end{thm}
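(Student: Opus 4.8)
The plan is to combine Theorem~\ref{thm:Main3}'s predecessors—Theorem~\ref{thm:Main2}, which identifies $\obs(D(G))$ with $\widetilde H^0(\cA_p(G);\overline D)$, and the vanishing result $H^1(\cD^*_G;B^*)=0$ (Lemma~\ref{lem:Vanish})—with the known structure of the Dade group as a $p$-biset functor. The key input is the exact sequence of $p$-biset functors relating $D$ to the torsion part $D_t$ and to a rational invariant: for $p$ odd there is a short exact sequence of biset functors
\[
0 \to D_t \to D \to D^\Omega \to 0 \quad\text{(or an analogous sequence involving } D^\Omega\text{ and } B^*),
\]
where $D^\Omega$ is the subfunctor generated by relative syzygies, and $D^\Omega$ sits inside $B^*$ (the dual Burnside functor) with a controllable cokernel. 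Concretely, I would use the presentation of $D$ in terms of $B^*$ coming from Bouc's work: there is an exact sequence $0 \to D \to B^* \xrightarrow{\ \varphi\ } (\text{something involving } \Zz/2)$ expressing $D(G)$ as the kernel of a reduction-type map on $B^*$. Applying the functor $\widetilde H^*(\cD^*_G;-)$ (equivalently $\ext^*_{R\cD_G}(T,-)$) to this short exact sequence yields a long exact cohomology sequence.

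First I would set up the relevant short exact sequence of $\cD^*_G$-modules (or $\cA_p(G)$-modules via Theorem~\ref{thm:Main2}) obtained by restricting the biset-functor exact sequence to the sections $(C_G(E),E)$. The zeroth-degree part of the long exact sequence reads
\[
0 \to \widetilde H^0(\cD^*_G;D) \to \widetilde H^0(\cD^*_G;B^*) \to \widetilde H^0(\cD^*_G;Q) \to \widetilde H^1(\cD^*_G;D) \to \widetilde H^1(\cD^*_G;B^*),
\]
where $Q$ is the quotient functor ($B^*/D$, essentially a $\Zz/2$-valued functor supported on the relevant sections). Using $\obs(D(G))\cong \widetilde H^0(\cD^*_G;D)$ and the vanishing $\widetilde H^0(\cD^*_G;B^*)=H^1(\cD^*_G;B^*)\otimes\cdots$—more precisely the statement $H^1(\cD^*_G;B^*)=0$ from Lemma~\ref{lem:Vanish} together with the computation of $\widetilde H^0(\cD^*_G;B^*)$—I would read off that $\obs(D(G))$ injects into (or equals) $\widetilde H^0$ of the quotient functor, which by Theorem~\ref{thm:Main2} and the identification of cohomology of $\cA_p(G)$ with orbit-space cohomology (Lemma~\ref{lem:CohIsom}) becomes $H^1(\cA_{\geq 2}(G)/G;\Zz)$. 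The shift in cohomological degree (from $\widetilde H^0$ of $Q$ to $H^1$ of $\cA_{\geq 2}(G)/G$) comes from the fact that $Q$ is itself a "syzygy" of the constant functor, i.e.\ there is a resolution relating $Q$ to $\underline{\Zz}$ shifting degree by one—this is exactly the mechanism already used in the excerpt to pass between $\ext^n_{R\cD_G}(T,F)$ and $\widetilde H^{n-1}(\cD^*_G;F)$.

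Next I would identify the second map in the asserted exact sequence. The connecting/quotient map $\widetilde H^0(\cD^*_G;B^*)\to \widetilde H^0(\cD^*_G;Q)$ is induced by the defining map $\varphi\colon B^* \to Q$ of the biset-functor presentation of $D$; after translating through Theorem~\ref{thm:Main2} and $\cA_p(G)$-cohomology, $Q$ contributes a $\Zz/2$-coefficient cohomology group, and tracking the maps shows that the relevant comparison $H^1(\cA_{\geq 2}(G)/G;\Zz)\to H^1(\cA_{\geq 2}(G)/G;\Zz/2)$ is precisely mod-$2$ reduction, because the map $\varphi$ on Burnside/Dade invariants is reduction modulo $2$ (reflecting that an endo-permutation module is determined up to the torsion-free quotient by a $\Zz/2$-valued class on each section, and over $\cA_{\geq 2}$ the torsion contributions—handled by Bouc--Th\'evenaz's earlier computation of $\obs(D_t)$—are already absorbed). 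The hard part will be making this last identification rigorous: one must check that under the chain of isomorphisms (destriction functor $\leftrightarrow$ $\cD_G$-module $\leftrightarrow$ $\cA_p(G)$-module $\leftrightarrow$ orbit-space cohomology) the structure map $\varphi$ really does become the mod-$2$ reduction, and that the portion of $\cA_p(G)$ indexed by rank-$1$ subgroups (where $D_t$ vanishes, so only rank $\geq 2$ survives, explaining why $\cA_{\geq 2}$ appears) does not introduce extra terms. I expect the vanishing Lemma~\ref{lem:Vanish} plus Oliver's dimension bound (Proposition~\ref{pro:Oliver}) to kill all higher-degree ambiguities, so the only real work is the degree-$0$ and degree-$1$ bookkeeping in the long exact sequence and the explicit identification of $\varphi$ with reduction mod $2$.
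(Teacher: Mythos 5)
Your proposal has the right general flavor---long exact sequences of $p$-biset functors, the vanishing $H^1(\cD_G^*;B^*)=0$, and a translation to orbit-space cohomology---but it contains a structural error and omits the main technical ingredient. First, the presentation $0\to D\to B^*\to Q$ you propose is backwards: for $p$ odd, $D=D^\Omega$ is a \emph{quotient} of $B^*$, via the exact sequence $0\to C_b\to B^*\to D^\Omega\to 0$, not a subfunctor. As written, your long exact sequence combined with $\obs(B^*(G))=\widetilde H^0(\cD_G^*;B^*)=0$ would force $\obs(D(G))=0$, which is false in general. The correct first step is to use the connecting homomorphism of $0\to C_b\to B^*\to D\to 0$ together with $\obs(B^*(G))=0$ and $H^1(\cD_G^*;B^*)=0$ to get the isomorphism $\obs(D(G))\cong H^1(\cD_G^*;C_b)$; the degree shift you attribute to a ``syzygy'' mechanism is really just this connecting map.

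Second, the source of the two coefficient groups $\Zz$ and $\Zz/2$ and of the mod-$2$ reduction map is not a ``$\Zz/2$-valued quotient of $B^*$'' but the \emph{other} exact sequence $0\to C_b\to R_{\Qq}^*\to D_t\to 0$, whose faithful parts on cyclic and generalized quaternion groups are exactly $\Zz\to\Zz/2$. One then needs the surjectivity of $\obs(R_{\Qq}^*(G))\to\obs(D_t(G))$ to truncate the resulting long exact sequence and exhibit $H^1(\cD_G^*;C_b)$ as the kernel of $H^1(\cD_G^*;R_{\Qq}^*)\to H^1(\cD_G^*;D_t)$. Finally, and most seriously, the step you defer as ``the hard part''---identifying $H^1(\cD_G^*;F)$ with $H^1(\cA_{\geq 2}(G)/G;\partial F)$ for a rhetorical functor $F$ whose faithful part is a fixed group $A$ on cyclic and quaternion subquotients---is a genuine theorem in its own right, not bookkeeping. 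It requires the Hambleton--Taylor--Williams detection sequences, the structure of $\cA_{\geq 2}(G)$ when $Z(G)$ is cyclic (isolated components versus the big component), an induction over subquotients $C_G(C)/C$, and a decomposition of $H^1$ of the orbit space relative to a contractible subposet. Without that computation there is no route from $H^1(\cD_G^*;R_{\Qq}^*)$ and $H^1(\cD_G^*;D_t)$ to $H^1(\cA_{\geq 2}(G)/G;\Zz)$ and $H^1(\cA_{\geq 2}(G)/G;\Zz/2)$, so the proof is incomplete at its central point.
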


This theorem is proved as Theorem \ref{thm:ObsD} in Section \ref{sect:GluingEndo}. One of the key 
ingredients is the computation of $H^1(\cD_G^*; F)$ for a rhetorical $p$-biset functor satisfying certain 
properties (see Proposition \ref{pro:H1Calc}).

The paper is organized as follows: We introduce the destriction algebra and the orbit category of sections 
in Section \ref{sect:DestOrbit}. The obstruction groups for the gluing problem are defined in Section \ref{sect:ObsGluing}. 
Theorem \ref{thm:Main1} is also proved in this section. In Section \ref{sect:Reduction} we define the reduced 
cohomology groups for the category $\cD^{\cC}_G$, and apply a theorem of Jackowski and S{\l}ominska to these 
reduced cohomology groups. In Section \ref{sect:Quillen}  we show that the reduced cohomology groups of 
$\cD^{\cC}_G$ can be calculated as the cohomology groups of the Quillen category $\cA_p(G)$ when $G$ is 
a $p$-group (Theorem \ref{thm:Main2}). In Section \ref{sect:Rhetorical} we prove Theorem \ref{thm:Rhetorical}, 
which gives an explicit description of the obstruction group $\obs(F(G))$ for a rhetorical $p$-biset functor $F$. 
Finally in Section \ref{sect:GluingEndo} we apply Theorem \ref{thm:Rhetorical} to calculate the obstruction 
groups for some rhetorical $p$-biset functors related to endo-permutation modules. Theorem \ref{thm:Main3} 
is proved in this final section.

\vskip 5pt
 
{\bf Acknowledgement:}  The second author is supported by a T\" ubitak 1001 project (grant no: 116F194).   
We thank the referee for careful reading of the paper and for many helpful suggestions that improved the paper.


\section{Destriction algebra and the orbit category of sections}\label{sect:DestOrbit}

In this section, we introduce basic definitions about biset functors and destriction algebra. Then we define 
the orbit category of sections $\cD^{\cC}_G$ over a collection of sections $\cC$ and prove that the module 
category over the orbit category of sections is equivalent to the module category over the destriction algebra.
 
\subsection{Biset functors}\label{sect:Bisets} Let $K$ and $H$ be finite groups.  A  $(K,H)$-biset is a finite 
set $X$ together with a left $K$-action and a right $H$-action such that $k(xh)=(kx)h$ for every $k\in K$, 
$h\in H$, and $x\in X$. A $(K,H)$-biset $X$ can be considered as a left $K\times H$-set with the action 
given by $(k,h)x=kxh^{-1}$. We define the Burnside group of $(K,H)$-bisets $B(K,H)$ as the Burnside group 
$B(K\times H)$ of left $K\times H$-sets. Given another finite group $L$, we define a composition product
\[
\text{--} \,\times_H \text{--} : B(K,H)\times B(H,L)\rightarrow B(K,L)
\]
as the linear extension of the correspondence $(X,Y)\mapsto X\times_H Y$ where $X\times_H Y = (X\times Y)/H$ 
is the set of $H$-orbits of the set $X\times Y$ under the action $h\cdot (x,y):= (xh^{-1}, hy)$.

Let $R$ be a commutative ring with unity. The \emph{R-biset category} $R\cB$ is a category whose objects are all finite 
groups, and in which the morphisms from the finite group $H$ to the finite group $K$ are given by the Burnside 
module $R \otimes _{\mathbb Z} B(K,H)$ of $(K,H)$-bisets. The composition in the biset category is given by 
the above composition product. For any finite group $H$, the identity morphism of $H$ in $R\cB$ is equal to 
$R \otimes {\rm id} _H$, where ${\rm id}_H={}_H H_H$ is the identity biset at $H$. 

Recall that if $U, V$ are subgroups of a finite group $G$ such that $V \nsub U$, then the quotient group $U/V$ is called a 
subquotient of $G$, and the pair $(U,V)$ is called a section of the group $G$.  
Let $\cX$ be a finite set of finite groups closed under taking subquotients up to isomorphism. By this we mean that 
if $G \in \cX$, then a subquotient $U/V$ of $G$ is isomorphic to a group in $\cX$.

\begin{defn}\label{def:biset} 
The $R$-biset category $R\cB^{\cX}$ over the collection $\cX$ is the full subcategory of the biset category $R\cB$ 
whose objects are groups in $\cX$. An $R$-linear functor $$F: R \cB ^{\cX} \rightarrow R\text{-mod}$$ 
is called a \emph{biset functor} defined on the collection $\cX$ over $R$.
\end{defn}

To define a biset functor for a fixed finite group $G$, we take $\cX$ as 
the collection $\cX^s$ which is defined as the set of all finite groups indexed by sections $(U,V)$ of $G$, 
where the group in $\cX^s$ corresponding to the section $(U,V)$ is the quotient group $U/V$.
In this case we denote the corresponding $R$-biset category by $R\cB_G$, and an $R$-linear functor $F: R \cB_G \rightarrow R$-mod 
is called a {\bf biset functor} for $G$ over $R$. 

We also recall the definition of the alchemic algebra for an arbitrary collection $\cX$.

\begin{defn}\label{def:alchemic} 
The \emph{alchemic algebra} for $\cX$ over $R$ is the $R$-algebra 
$$R\Gamma ^{\cX} =\bigoplus _{K,H \in \cX} RB(K,H)$$
with multiplication given by biset composition products. A biset functor $F$ defined on $\cX$ over $R$ is 
defined to be an $R\Gamma ^{\cX}$-module (see \cite{Barker-Rhetorical} for details). 
\end{defn}

It is shown in 
\cite[pg. 3816]{Barker-Rhetorical} that this definition of a biset functor as an $R\Gamma ^{\cX}$-module coincides 
with the definition given in Definition \ref{def:biset}.

By \cite[Lemma 2.3.26]{Bouc-Book}, any transitive $(K,H)$-biset is a composition of five elementary bisets.  
To introduce our notation, let $H\le G$  and $N\unlhd G$ be subgroups of $G$ and $\phi: G'\to G$ be an isomorphism. 
Then the set $G$ can be regarded as a $(G,H)$-biset, as an $(H, G)$-biset or as a $(G, G')$-biset via the usual 
actions. These bisets are called the induction biset $\ind_H^G$, the restriction biset $\res^G_H$ and the isogation biset 
$\iso_{G, G'}^\phi$, respectively. In addition, the set $G/N$ can be regarded as a $(G, G/N)$-biset or as a 
$(G/N, G)$-biset. These are called the inflation biset $\infl_{G/N}^G$ and deflation biset $\defl_{G/N}^G$. With this notation, 
for a subgroup $J \le K\times H$, we have 
\[
\Big(\frac{K\times H}{J}\Big) = \ind_P^K \infl_{P/A}^P\iso_{P/A, Q/B}^\lambda\defl^{Q}_{Q/B}\res^H_Q
\]
where $P=\{ k \in K\, |\, \exists h \in H : (k,h) \in J \} $ and  $A=\{ k \in K \, | \, (k,1) \in J\}$.
The subgroups $Q$ and $B$ are defined in a similar way. The isomorphism $\lambda: Q/B \to P/A$ 
is the one induced by $J$. 

In particular, the alchemic algebra for $\cX$ over $R$ is generated by its elementary bisets. An $R$-free basis 
for $R\Gamma^\cX$ can be found in \cite[Lemma 2.2]{Webb}.  


\subsection{Destriction algebra}\label{sect:destriction}
Let $R$ be a commutative ring with unity and $G$ be a finite group.
Let $\cX^s$ denote a collection of finite groups defined in the previous section. 
For a section $(U, V)$ of $G$, the composition 
of the deflation map $\defl ^U _{U/V}$ and the restriction map  $\res^G _U$ is called the destriction biset
and is denoted by $\defres^G _{U/V}$ or shortly by $\des^G _{U/V}$. For a section $(M, L)$ of $G$, if $(U, V)$ is another section of 
$G$ with $L\le V\le U\le M$, then there is a $(U/V, M/L)$-biset given by 
$$\iso_{U/V, (U/L)/(V/L)}^\lambda \circ \defl^{U/L} _{(U/L)/(V/L)} \circ  \res^{M/L}_{U/L}$$
where $\lambda$ is the canonical isomorphism. With an abuse of notation, we denote this composition by 
$\des^{M/L}_{U/V}$, and call it the destriction $(U/V, M/L)$-biset.  

\begin{defn}\label{defn:destriction}  Let $R, G$ and $\cX^s$ be as above.
\begin{enumerate}
\item The \textbf{$R$-destriction category} $R\mathcal {DES}_G$ is the subcategory of the $R$-biset category $R\cB_G$
with objects the groups in $\cX^s$, in which morphisms from $H=M/L$ to $K=U/V$ are given by the bisets of the form
$$\rc_{K,K^g}^g\des^{H}_{K^g}$$ where $(U,V)^g \squ (M,L)$ and $g\in G$.

\item The \textbf{native destriction algebra} $\nabla' _G (R)$ is defined as the subalgebra of the 
alchemic algebra $R\Gamma^{\cX^s}$ generated by the bisets of the form $\rc_{ \leftexp{g} S, S}^g = 
\iso_{\leftexp{g} S, S}^g$ and $\des^T_{S}$ where  $g\in G$ and $S=U/V$ and $T=M/L$  with $(U,V)\squ (M,L)$. 
\end{enumerate}
\end{defn}

Note that it follows from \cite[Lemma 2.2]{Webb} that the set
$$\{ \rc_{K,K^g}^g\des^{H}_{K^g}\, |\, K = U/V, H= M/L \text{ with } (U, V)^g \squ  (M,L), g\in U\backslash G\}$$
is an $R$-free basis for $\nabla_G(R)$. Also as in the case of biset functors and the alchemic algebra, the category 
of modules over $\nabla' _G(R)$ can be identified by the category of $R$-linear functors $R\mathcal{DES}_G \to R\text{-mod}$. There is a restriction functor from the category of biset functors to the category of 
native destriction functors, induced by the inclusion map $\nabla '_G(R) \hookrightarrow R\Gamma ^{\cX^s}$ of the subalgebra. 

Now we define an abstract destriction algebra with generators and relations that  
will be more useful for our purposes.

\begin{defn}\label{pro:DesAlg}
Let $R, G$ and $\cX^s$ be as above.
\begin{enumerate}
\item The \textbf{destriction algebra} $\nabla_G(R)$ is the algebra generated by the symbols of the form 
\begin{enumerate}
  \item[(G1)] $\rc_{\leftexp{g} K,K}^g$ for each $g\in G$ and $K = U/V$ in $\cX^s$, called a \emph{conjugation} and
  \item[(G2)] $\des^H_K$ for each pair $K= U/V$ and $H =M/L$ in $\cX^s$ such that  $(U, V) \squ  (M, L)$, called a \emph{destriction} (in this case we also write $K \squ H$)
\end{enumerate}
subject to the following relations
\begin{enumerate}
  \item[(R1)] $\rc_{H,H}^h = \des^H_H$ for any $H = M/L$ and $h\in M$,
  \item[(R2)] $\rc_{\leftexp{gg^\prime}H, \leftexp{g^\prime}H}^{g}\rc^{g^\prime}_{\leftexp{g^\prime}H,H} = 
  \rc_{\leftexp{gg'}H,H}^{gg^\prime} $ and $\des^K_L\des^H_K = \des^H_L$ for any
  $g,g^\prime \in G$ and $L\squ K\squ H$,
  \item[(R3)] $\des^H_K\rc^{g}_{H,H^g} = \rc_{K,K^g}^g\des^{H^g}_{K^g}$ for any $g\in G$ and $K\squ H$, and
  \item[(R4)] $1 = \sum_{U/V\in\cX^s}\rc_{U/V}$ where $\rc_{U/V} =\rc_{U/V, U/V}^1$ is the conjugation 
  associated to the identity element $1$ of $G$.
\end{enumerate}
  \item A \emph{destriction functor} for $G$ over $R$ is defined as a module over the destriction algebra $\nabla _G(R)$.
\end{enumerate}
\end{defn}
It is straightforward to show, using arguments in \cite[Lemma 2.2]{Webb}, that the destriction algebra has a basis
consisting of symbols of the form $\rc_{K,K^g}^g\des^{H}_{K^g}$. Hence there is a homomorphism of algebras 
$\varphi: \nabla_G (R) \to R\Gamma ^{\cX^s}$ defined by mapping the symbol $\rc_{K,K^g}^g\des^{H}_{K^g}$ to the 
biset $\rc_{K,K^g}^g\des^{H}_{K^g}$. By definition the image of the homomorphism $\varphi$ is the native destriction
algebra $\nabla ' _G (R)$. The homomorphism $\varphi$ is not injective in general since some conjugations may 
induce isomorphic bisets, for example, if $g $ centralizes $K$, then $\rc_{K, K^g } ^g=\rc_{K, K} ^1 $ as bisets. 
Every biset functor $F$ can be regarded as a destriction functor
for $G$ over $R$ via the homomorphism $\varphi: \nabla_G (R) \to R\Gamma ^{\cX^s}$.
 
In the next section, we define a small category and express destriction functors 
as functors from this small category to the category of $R$-modules. 
 
 
\subsection{The orbit category of sections}\label{sec:OrbSub} 

Let $G$ be a finite group. We call a set of sections of $G$ a \emph{collection
of sections} of $G$ if it is closed under conjugation. Let $\mathcal C$ be a collection of sections 
of a finite group $G$. The {\bf orbit category 
of sections $\cD^{\mathcal C} _G$} over the collection $\mathcal C$ is defined 
as in Definition \ref{def:OrbCatSec}. When $\mathcal C$ is the collection of all sections of $G$, we denote the category of sections with $\cD_G$.
Note that the category $\cD_G^{\mathcal C}$ is an EI-category (all endomorphisms are isomorphisms) and 
we have the isomorphism Aut$_{\cD_G}\bigl ( (U, V) \bigr ) \cong N_G(U, V)/U$ of groups where 
$N_G(U,V) = N_G(U)\cap N_G(V)$.

\begin{rem} 
Recall that the orbit category $\mathcal O ^{\mathcal C} _G$ of a finite group $G$, over a collection $\mathcal C$ 
of subgroups of $G$, is the category whose objects are the subgroups of $G$ and whose morphisms from 
$H$ to $K$ are the set of $G$-maps $G/H \rightarrow G/K$, with a composition given by the composition of functions. 
We can identify the set of morphisms from $H$ to $K$ with right cosets $Kg$ with $g\in G$ satisfying $gHg^{-1}\leq K$. 
Note that given a collection of subgroups $\cC$, if we define a collection of sections $\cC'$ as a collection formed by 
sections $S=(U, V)$ with $V=1$ and $U \in \cC$, then the category of sections $\cD^{\cC'}_G$ is isomorphic to the 
orbit category $\cO^{\cC}_G$. The notion of orbit category of sections is a generalization of the orbit 
category of a group.
\end{rem}
 
The category $\cD_G$ is closely connected to the destriction algebra $\nabla_G (R)$  introduced in 
Section \ref{sect:destriction}.  
 
\begin{pro}\label{pro:Equivalence}
Let $\cC$ be a collection of sections closed under taking subsections. Let $\overline{\cD^{\cC}_G}$ denote the 
opposite category of  $\cD^{\cC}_G$, and let $1_\nabla^\cC$ denote the sum $\sum_{(U, V) \in\cC} \rc_{U/V}$ 
in the destriction algebra $\nabla_G(R)$. Set $\nabla_G^\cC(R) = 1_\nabla^\cC \nabla_G(R) 1_\nabla^\cC$. Then 
the map $$j_G :R\overline{\cD^\cC_G}\rightarrow \nabla_G^\cC(R)$$ that takes the morphism $Mg$ from $(U,V)$ 
to $(M,L)$  in $\cD_G^{\cC}$ to the basis element $\rc^{g^{-1} } _{S, {}^gS }\des^T_{{}^gS}$ in $\nabla ^{\cC} _G(R)$, 
where $S=U/V$ and $T=M/L$, is an isomorphism of $R$-algebras.
\end{pro}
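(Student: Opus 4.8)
The plan is to verify that the proposed map $j_G$ is a well-defined bijective $R$-linear map and then check multiplicativity, i.e. that it reverses composition in the expected way so that it is an algebra homomorphism from $R\overline{\cD^\cC_G}$ to $\nabla_G^\cC(R)$. First I would make the domain and codomain explicit as $R$-modules: a basis for $R\overline{\cD^\cC_G}$ is given by the morphisms $Mg\colon (U,V)\to(M,L)$ of $\cD_G^{\cC}$, which by Definition \ref{def:OrbCatSec} are parametrized by double cosets, namely by $g\in M\backslash G$ with ${}^g(U,V)\squ (M,L)$; and by the basis statement following Definition \ref{pro:DesAlg}, together with the idempotent truncation by $1_\nabla^\cC$, a basis for $\nabla_G^\cC(R)$ consists of the elements $\rc^{h}_{K,K^{h}}\des^{T}_{K^{h}}$ with $K=U'/V'$, $T=M'/L'$ sections in $\cC$, $(U',V')^{h}\squ(M',L')$, and $h\in U'\backslash G$. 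Setting $h=g^{-1}$, $S=U/V$, $T=M/L$, the element $\rc^{g^{-1}}_{S,{}^gS}\des^{T}_{{}^gS}$ is exactly such a basis element (note ${}^gS$ is the subquotient $({}^gU)/({}^gV)$, which is a section in $\cC$ since $\cC$ is conjugation-closed, and $(U,V)\squ$ condition translates correctly), and the parametrization matches up: $g\in M\backslash G$ corresponds to $g^{-1}\in G/M$, which is the same data as the $U'\backslash G$-coset once one keeps track of which section is the source and which is the target. So I would check that $g\mapsto \rc^{g^{-1}}_{S,{}^gS}\des^{T}_{{}^gS}$ induces a bijection on these bases; this gives that $j_G$ is a well-defined $R$-linear isomorphism of modules.

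Next I would check that $j_G$ respects composition. In $\cD_G^{\cC}$ the composite of $Mg\colon(U,V)\to(M,L)$ followed by $Nh\colon(M,L)\to(N,R)$ is $Nhg\colon(U,V)\to(N,R)$; passing to the opposite category reverses this, so in $R\overline{\cD^\cC_G}$ the product of $Nh$ (now a morphism ``from $(N,R)$ to $(M,L)$'') with $Mg$ is $Nhg$. On the destriction side I must compute $\bigl(\rc^{g^{-1}}_{S,{}^gS}\des^{T}_{{}^gS}\bigr)\bigl(\rc^{h^{-1}}_{T,{}^hT}\des^{W}_{{}^hT}\bigr)$ where $W=N/R$, and show it equals $\rc^{(hg)^{-1}}_{S,{}^{hg}S}\des^{W}_{{}^{hg}S}$ (or rather the product in the order dictated by the opposite category). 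This is where the relations (R1)--(R4) of Definition \ref{pro:DesAlg} do the work: one pushes the conjugation $\rc^{h^{-1}}_{T,{}^hT}$ past $\des^T_{{}^gS}$ using (R3) (the commutation relation between destrictions and conjugations), composes destrictions using the transitivity relation $\des^K_L\des^H_K=\des^H_L$ in (R2), composes conjugations using $\rc\,\rc=\rc$ in (R2), and uses (R1) to absorb any conjugation by an element of the relevant subgroup (this is precisely the ambiguity in the double-coset representative, which is why well-definedness on cosets and multiplicativity are intertwined). I would organize this as a single explicit relation-chasing computation, being careful about which subquotient each symbol is attached to and that intermediate sections lie in $\cC$ (guaranteed since $\cC$ is closed under subsections and conjugation).

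The main obstacle I anticipate is purely bookkeeping: keeping straight the three layers of ``conjugation'' — the conjugation action on sections $S\mapsto {}^gS$, the conjugation generators $\rc^g$ in $\nabla_G(R)$, and the coset ambiguity $Mg=Mxg$ for $x\in M$ — and matching the direction conventions so that the \emph{opposite} category, not $\cD^\cC_G$ itself, is what maps to $\nabla_G^\cC(R)$ (the reversal is essential because destriction maps ``go down'' $U/V\to$ larger, while composition of cosets in an orbit category ``goes up''). A clean way to handle the ambiguity is to first fix, once and for all, coset representatives and check the formula on representatives, then separately verify independence of the choice using (R1); alternatively one checks that the naive assignment on all of $G$ is constant on $M$-cosets directly from (R1) and (R2). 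I would also remark that identities are preserved: the identity morphism of $(U,V)$ in $\cD^\cC_G$ is $U\cdot 1$, which maps to $\rc_{U/V,U/V}^1=\rc_{U/V}$, and $\sum_{(U,V)\in\cC}\rc_{U/V}=1_\nabla^\cC$ by (R4), so $j_G$ is unital. Finally, injectivity plus surjectivity of $j_G$ on bases, together with multiplicativity and unitality, give that $j_G$ is an isomorphism of $R$-algebras, completing the proof.
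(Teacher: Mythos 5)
Your proposal is correct and follows essentially the same route as the paper: well-definedness on coset representatives via the relations (R1)--(R3), multiplicativity by transitivity of destriction and conjugation, and bijectivity by matching the explicit bases (the paper's injectivity step shows two symbols $\rc^{g^{-1}}_{S,{}^gS}\des^T_{{}^gS}$ and $\rc^{h^{-1}}_{S,{}^hS}\des^T_{{}^hS}$ coincide only if $Mg=Mh$). The only caveat is your aside that the $M\backslash G$ and $U'\backslash G$ parametrizations carry ``the same data'': since $Ug\subseteq gM$, a single $M$-coset may contain several $U$-cosets, so the faithful indexing of the basis elements is by $M\backslash G$, which is exactly what the injectivity check you promise must establish.
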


\begin{proof}  The map $j_G$  is well-defined since, given $x\in M$, we have
\[
\rc^{(xg)^{-1} } _{S, {}^{xg}S }\des^T_{{}^{xg}S} = \rc^{g^{-1} } _{S, {}^gS }\rc^{x^{-1} } _{{}^gS, {}^{xg}S }\des^T_{{}^{xg}S} 
= \rc^{g^{-1} } _{S, {}^gS }\des^{T^{x}}_{{}^{g}S}\rc^{x^{-1} } _{T^{x}, T} = \rc^{g^{-1} } _{S, {}^gS }\des^{T}_{{}^{g}S}
\]
by the composition product formula for bisets. Also, by the transitivity of destriction and conjugation, $j_G$ 
preserves products. Therefore $j_G$ is an algebra map, which is clearly surjective. To prove that it is also injective, let 
$\rc^{g^{-1} } _{S, {}^{g}S }\des^T_{{}^{g}S} = \rc^{h^{-1} } _{S, {}^hS }\des^T_{{}^{h}S}$
for $Mg, Mh\in \hom_{\overline{\cD} _G}(T,S)$. This equality holds if there exists $x\in M$ such that 
${}^{xg}S = {}^hS$ and $$\rc^{g^{-1} } _{S, {}^{g}S } = \rc^{h^{-1}} _{S, {}^{h}S}\rc^{x} _{{}^{xg}S, {}^{g}S}.$$
In particular we have $gh^{-1}x\in {}^g U\le M$. Since $x\in M$, we get $gh^{-1}\in M$, that is, $Mg = Mh$, 
as required.
\end{proof}


\subsection{Modules over destriction algebra}
\label{subsec:Modules}

Given a biset functor or a destriction functor $F$, there is an associated 
contravariant functor $$F: \cD^{\mathcal C} _G \to R\text{-mod},$$ defined as follows:
We first consider the biset functor $F$ as an $R\overline{\cD^{\cC} _G}$-module via the composition
$$R \overline{\cD^{\mathcal C} _G} \maprt{j_G} \nabla ^{\cC} _G (R) \maprt{\varphi} R \Gamma ^{\cC} $$
where the isomorphism $j_G$ is given in Proposition \ref{pro:Equivalence}. Then we use the correspondence 
between the contravariant functors $\cD^{\mathcal C} _G \to R\text{-mod}$ and the modules over the category 
algebra $R\overline{\cD_G ^{\cC}}$ (see \cite[Thm 7.1]{Mitchell}). Because of this correspondence 
contravariant functors $\cD_G ^{\cC} \to R$-mod are usually 
called (right)  $R\cD^{\cC}_G$-modules. Conversely, every contravariant functor $F : \cD^{\cC} _G \to R\text{-mod}$ 
can be considered as a $\nabla _G ^{\cC} (R)$-module (destriction functor) via the isomorphism $j_G$.

Note that under these identifications, 
the category algebra of the orbit category $\cO ^{\cC} _G$ over a collection $\cC$ of subgroups of $G$ 
is identified with the algebra $$\rho_G: = \nabla ^{\cC'} _G (R)=1_\nabla^{\cC'} \nabla _G (R) 1_\nabla^{\cC'},$$ 
where $\cC'=\{ (H,1) \, | \, H\in \cC \}$. The algebra $\rho_G$ is called the restriction algebra 
for $G$ over $R$. The representation  theory of $\rho_G$ is well-known. Moreover representation theory 
of $\nabla _G ^{\cC} (R)$ for an arbitrary collection $\cC$ is very similar to that of the restriction algebra. 
Simple and projective modules can be described 
using the following constructions. Alternatively, one can use the general results about the representations of 
EI-categories to construct these modules.  

Let $\nabla :=\nabla ^{\cC} _G (R)$. There is a subalgebra $\Omega$ of $\nabla$, called the \emph{conjugation subalgebra}, generated 
by all conjugations in $\nabla$. It is isomorphic to the quotient of $\nabla$ by the ideal generated by all proper destriction 
maps, that is by all $\des^H_K$ with $H\neq K$. This means that there are several functors between the categories 
of modules of these algebras, namely there are induction, coinduction and inflation from $\Omega$-mod to 
$\nabla$-mod and deflation, codeflation and restriction from $\nabla$-mod to $\Omega$-mod. Two of these 
functors are easy to identify. The restriction functor $\res^\nabla_\Omega$ from $\nabla$-modules to 
$\Omega$-modules is the usual restriction of the $\nabla$-action. The inflation functor $\infl_\Omega^\nabla$ 
is the restriction functor along the canonical epimorphism $\nabla\to\Omega$ of algebras.  

First we describe the $\Omega$-modules. Given a section $H=(U,V)$ of $G$, 
we write $\rc_{[H]}$ for the sum of 
all idempotents $\rc_{M/L}$ over all sections $(M,L)$ which are $G$-conjugate to $(U,V)$. Then $\Omega_{[H]} =\rc_{[H]}\Omega\rc_{[H]}$ 
is a two-sided ideal of $\Omega$ and \[ \Omega = \bigoplus_{H\squ_GG} \Omega_{[H]}. \] 
In particular, any $\Omega$-module $F$ can be decomposed as $F = \oplus_{H\squ_G G} F_{[H]}$ where 
$F_{[H]} = \rc_{[H]}F$. 

Given a section $H$ of $G$, the algebra $\Omega_H := \rc_H\Omega\rc_H = \rc_H\Omega_{[H]}\rc_H$ is isomorphic to the group algebra 
$R[N_G(U,V)/U]$ and is also a (non-unital) subalgebra of $\Omega_{[H]}$. Hence there is an obvious restriction functor 
$\res^{\Omega_{[H]}}_{\Omega_{H}}:\Omega_{[H]}$-mod $\to \Omega_H$-mod given on objects by 
$\rc_H\Omega_{[H]}\otimes_{\Omega_{[H]}} -$. Its left adjoint is the induction functor
$\ind^{\Omega_{[H]}}_{\Omega_{H}} = \Omega_{[H]}\rc_H\otimes_{\Omega_H} -$. It is easy to show that there are
natural equivalences $$\ind^{\Omega_{[H]}}_{\Omega_{H}}\circ\res^{\Omega_{[H]}}_{\Omega_{H}} \cong {\rm Id}_{\Omega_{[H]}-{\rm mod}}\,\,{\rm and}\,\, \res^{\Omega_{[H]}}_{\Omega_{H}}\circ\ind^{\Omega_{[H]}}_{\Omega_{H}}\cong{\rm Id}_{\Omega_{H}-{\rm mod}}.$$
In particular the algebra $\Omega_{[H]}$ is Morita equivalent to the group algebra $\Omega_H$. 

Now it is straightforward to see that the algebra $\Omega$ is Morita equivalent to the product algebra 
$\prod_{H\squ_G G}\Omega_H$. Hence the representations of $\Omega$ are just tuples of group representations 
for the various groups appearing in the product. In particular, the simple $\Omega$-modules are the functors 
$S_{H, I}^\Omega$ where $H\squ G$ and $I$ is a simple $\Omega_H$-module such that $\rc_{[K]}S_{H, I}^\Omega =0$
if $K$ is not conjugate to $H$ in $G$ and $S_{H, I}^\Omega(H) = I$. We call such a functor which has a support only on a unique conjugacy class of sections 
an \emph{atomic} functor. 
Similarly, if $P_I$ is a projective 
cover of $I$ as an $\Omega_H$-module, then the atomic functor $P_{H, I}^\Omega$ with value $P_I$ at $H$ is a projective cover of 
$S_{H,I}^\Omega$. We also have
\[
S_{H,I}^\Omega = \ind_{\Omega_{H}}^{\Omega} I \;\;\; \mbox{\rm and} \,\,\, P_{H,I}^\Omega = \ind_{\Omega_{H}}^{\Omega} P_{I}.
\]
With this notation we have the following description of simple and projective indecomposable $\nabla$-modules. Note that although the classification of simple and projective modules for $\nabla$ are similar to that of biset functors as in \cite{Bouc-Book}, the following result does not follow from the later classification. In \cite{Bouc-Book}, the categories of bisets always contains all inductions and restrictions hence $\nabla$ is not an example of these general results.

\begin{pro}
Let $H\squ G$ be a section of $G$, let $I$ be a simple $\Omega_H$-module and $P_I$ be its projective cover. 
The following statements hold.
\begin{enumerate}
\item The $\nabla$-module $S_{H,I}^\nabla = \infl_{\Omega}^\nabla S_{H,I}^\Omega$ is simple. Moreover, any simple 
$\nabla$-module is of this form for some pair $(H,I)$ with $H\squ G$ and $I$ a simple $\Omega_H$-module.
\item The $\nabla$-module $P_{H,I}^\nabla = \ind_\Omega^\nabla P_{H,I}^\Omega$ is projective indecomposable 
with simple head $S_{H,I}^\nabla$. 
\end{enumerate}
\end{pro}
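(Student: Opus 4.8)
The plan is to obtain both statements from the already-described representation theory of the conjugation subalgebra $\Omega$, working along the split surjection of algebras $\pi\colon\nabla\to\Omega$ (pull-back along which is the functor $\infl_\Omega^\nabla$) and its section $\iota\colon\Omega\hookrightarrow\nabla$, the inclusion of the subalgebra (restriction along which is the functor $\res^\nabla_\Omega$). Here $\nabla=\nabla^\cC_G(R)$ and $J:=\krn\pi$ is the two-sided ideal generated by the proper destrictions $\des^H_K$ (those with $K\squ H$, $K\ne H$); since $\pi$ fixes every conjugation and $\Omega$ is generated by conjugations, $\pi\iota=\mathrm{id}_\Omega$, so that $\res^\nabla_\Omega\circ\infl_\Omega^\nabla$ is the identity functor on $\Omega$-mod.

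For statement (1), the decisive step is to prove that $J$ is nilpotent, hence contained in $\mathrm{rad}(\nabla)$. Attach to each section $(U,V)$ the quantity $\ell(U,V)=|U/V|$; it is conjugation invariant, and $L\le V\le U\le M$ forces $|U/V|\le|M/L|$ with equality only when $(U,V)=(M,L)$, so a proper destriction strictly decreases $\ell$ while a conjugation leaves it fixed. Using the $R$-basis of $\nabla$ given by the elements $\rc^g_{K,K^g}\des^H_{K^g}$ together with the relations (R1)--(R3), one checks that $J$ is spanned by those basis elements for which $\ell(K)<\ell(H)$ and that a nonzero product of $N$ of them decreases $\ell$ by at least $N$; since $\ell$ takes only finitely many values, $J^N=0$ for large $N$. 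Consequently $J$ annihilates every simple $\nabla$-module, so each such module is inflated from a simple $\Omega$-module (identifying $\Omega$ with $\nabla/J$), while conversely $\infl_\Omega^\nabla$ of a simple $\Omega$-module is simple because any $\nabla$-submodule of it is an $\Omega$-submodule. Combined with the description, recalled above, of the simple $\Omega$-modules as the atomic functors $S^\Omega_{H,I}$, this gives (1) with $S^\nabla_{H,I}=\infl_\Omega^\nabla S^\Omega_{H,I}$.

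For statement (2), the functor $\res^\nabla_\Omega$ is exact, so its left adjoint $\ind_\Omega^\nabla=\nabla\otimes_\Omega-$ preserves projectives; thus $P^\nabla_{H,I}=\ind_\Omega^\nabla P^\Omega_{H,I}$ is a finitely generated nonzero projective $\nabla$-module. To compute its head I would use the adjunction together with $\res^\nabla_\Omega\circ\infl_\Omega^\nabla=\mathrm{id}$: for every simple $\nabla$-module $S^\nabla_{K,L}=\infl_\Omega^\nabla S^\Omega_{K,L}$,
$$
\hom_\nabla\bigl(P^\nabla_{H,I},\,S^\nabla_{K,L}\bigr)\;\cong\;\hom_\Omega\bigl(P^\Omega_{H,I},\,\res^\nabla_\Omega\infl_\Omega^\nabla S^\Omega_{K,L}\bigr)\;=\;\hom_\Omega\bigl(P^\Omega_{H,I},\,S^\Omega_{K,L}\bigr),
$$
which, since $P^\Omega_{H,I}$ is the projective cover of $S^\Omega_{H,I}$, vanishes unless $S^\Omega_{K,L}\cong S^\Omega_{H,I}$ and equals $\mathrm{End}_\Omega(S^\Omega_{H,I})$ in that case. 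Hence $P^\nabla_{H,I}/\mathrm{rad}(P^\nabla_{H,I})$ is a single copy of $S^\nabla_{H,I}$; a finitely generated projective module with simple head is indecomposable with local endomorphism ring, so $P^\nabla_{H,I}$ is the projective cover of $S^\nabla_{H,I}$, proving (2).

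I expect the nilpotence of $J$ to be the part that demands real care: one must keep straight the source/target conventions of the destriction and conjugation symbols and verify, via (R1)--(R3), that left- or right-multiplying any basis element by a proper destriction again yields a combination of basis elements of strictly smaller $\ell$; after that, everything is a formal adjunction argument resting on the structure of the $\Omega$-modules. A minor caveat is that, over a general commutative ring $R$, the terms ``simple module'' and ``projective cover'', and the conclusion that $\mathrm{End}_\Omega(S^\Omega_{H,I})$ is a division ring, should be read under the usual hypotheses on $R$ (for instance $R$ a field).
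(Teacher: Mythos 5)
Your argument is correct (modulo the standing hypotheses on $R$ that the statement itself implicitly requires, e.g.\ $R$ a field, so that ``simple module'', ``projective cover'' and the locality of $\mathrm{End}$ of a simple all make sense), but it is organized rather differently from the paper's proof, and the comparison is worth recording. For part (1) the paper never passes through the nilpotence of the destriction ideal: it argues directly at the level of functors that a simple $\nabla$-module must be atomic, by observing that if $F(U/V)\neq 0\neq F(M/L)$ with $|U/V|\ge |M/L|$ and $(U,V)\neq_G(M,L)$, then $F(M/L)$ generates a proper nonzero $\nabla$-submodule --- the engine being exactly the fact that powers your argument, namely that a proper destriction strictly decreases $|U/V|$. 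Your route ($J=\krn(\nabla\to\Omega)$ is nilpotent, hence annihilates every simple, hence every simple is inflated from $\Omega$) is a clean ring-theoretic repackaging of the same observation; it has the mild advantage of working over any commutative $R$ for the classification of simples and of making explicit that $J\subseteq\mathrm{rad}(\nabla)$. For part (2) the paper proves indecomposability by computing $\mathrm{End}_\nabla(P^\nabla_{H,I})\cong\mathrm{End}_{\Omega_H}(P_I)$ via the identity $\rc_H\nabla\rc_H=\Omega_H$ (equivalently $P^\nabla_{H,I}(H)\cong P_I$) and leaves the identification of the simple head implicit, whereas your computation of $\hom_\nabla(P^\nabla_{H,I},S^\nabla_{K,L})$ through the $\ind$--$\res$ adjunction and $\res^\nabla_\Omega\circ\infl_\Omega^\nabla=\mathrm{id}$ establishes the ``simple head $S^\nabla_{H,I}$'' clause explicitly --- which is precisely the part of the statement the paper's proof glosses over. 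Two small points of hygiene: your phrase ``decreases $\ell$ by at least $N$'' should be read as ``each proper destriction strictly decreases the integer $\ell$, so a nonzero product of $N$ elements of $J$ drops $\ell$ by at least $N$ (in fact by a factor of at least $2^N$)'', which still yields $J^{|G|}=0$; and your $J=\krn\pi$ collides notationally with the functor $J^{\cC}$ introduced later in the paper, so a different symbol would be preferable.
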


\begin{proof}
It is clear that the $\nabla$-module $S_{H,I}^\nabla$ is simple. To see that any simple $\nabla$-module is of this form, 
note that for any $\nabla$-module $F$ and any sections $(U,V)$ and $(M,L)$ of $G$ with $F(U/V) \neq 0 \neq F(M/L)$ such that 
$|U/V| \ge |M/L|$ and $(U,V) \neq_G (M,L)$, the $R$-module $F(M/L)$ generates a proper non-zero $\nabla$-submodule of $F$. 
Thus a simple $\nabla$-module must be atomic. Moreover if $S$ is a simple $\nabla$-module with $S(H) \neq 0$, 
then clearly $S(H)$ is simple. 

To prove the second part, write
\[
P_{H,I}^\nabla = \ind_\Omega^\nabla P_{H,I}^\Omega = \ind_\Omega^\nabla \ind^\Omega_{\Omega_H} P_I  
= \ind_{\Omega_H}^\nabla P_I = \nabla\otimes_{\Omega_H} P_I.
\]
Since $\nabla$ is a free right $\Omega_H$-module, by general properties of tensor product, the 
$\nabla$-module $P_{H,I}^\nabla$ is projective and it is indecomposable 
since we have the following isomorphisms of rings.
\[
\mbox{\rm End}_\nabla(P_{H,I}^\nabla) = \hom_{\nabla}(\nabla\otimes_{\Omega_H} P_I, \nabla\otimes_{\Omega_H} P_I) 
\cong \hom_{\Omega_H}(P_I, \nabla\otimes_{\Omega_H} P_I) \cong \hom_{\Omega_H}(P_I, P_I).
\]
Here the last isomorphism holds since as $\Omega_H$-modules,
there is an isomorphism
$$\nabla\otimes_{\Omega_H} P_I  = \rc_H\nabla\rc_H\otimes_{\Omega_H} P_I \cong\Omega_H\otimes_{\Omega_H} P_I \cong P_I.$$  
Since by definition, $P_I$ is indecomposable, the ring $\mbox{\rm End}_\nabla(P_{H,I}^\nabla)$ 
is local, and hence $P_{H,I}^\nabla$ is indecomposable. 
\end{proof}

\begin{cor}
Let $H\squ G$ and $I$ be a simple $\Omega_H$-module. Then for any  section $K\squ G$, there is an isomorphism 
\[
P_{H, I}^\nabla (K) \cong R\hom_\nabla(H, K)\otimes_{\Omega_H} P_I
\]
of $\Omega_K$-modules.
\end{cor}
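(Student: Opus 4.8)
The plan is to read off everything from the description $P_{H,I}^\nabla = \nabla\otimes_{\Omega_H}P_I$ obtained in the proof of the preceding proposition, applying the idempotent $\rc_K$ and bookkeeping with the orthogonal idempotents $\rc_{U/V}$ of $\nabla$. Recall that for any $\nabla$-module $F$ (equivalently, contravariant functor on $\cD_G$) the value at a section $K = U/V$ is recovered as $F(K) = \rc_K F$, carrying the $\aut_{\cD_G}(K)\cong N_G(U,V)/U$-action given by left multiplication of the group algebra $\Omega_K = \rc_K\nabla\rc_K$. Taking $F = P_{H,I}^\nabla = \nabla\otimes_{\Omega_H}P_I$ yields
\[
P_{H,I}^\nabla(K) = \rc_K\bigl(\nabla\otimes_{\Omega_H}P_I\bigr).
\]

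First I would note that $\rc_K$ acts on the left tensor factor only, so that $\rc_K\bigl(\nabla\otimes_{\Omega_H}P_I\bigr) = (\rc_K\nabla)\otimes_{\Omega_H}P_I$, and that this identification is $\Omega_K$-linear, because on both sides the $\Omega_K$-module structure is just left multiplication by $\Omega_K = \rc_K\nabla\rc_K$ on the common left factor $\rc_K\nabla$. Next, using $1 = 1_\nabla^\cC = \sum_{L\in\cC}\rc_L$ I would write $\rc_K\nabla = \bigoplus_{L\in\cC}\rc_K\nabla\rc_L$ as $R$-modules, and observe that for $L\neq H$ the summand $\rc_K\nabla\rc_L$ dies after $-\otimes_{\Omega_H}P_I$: for $x\in\rc_K\nabla\rc_L$ and $p\in P_I$ we have $x\otimes p = x\otimes\rc_H p = x\rc_H\otimes p = (\rc_K x\rc_L)\rc_H\otimes p = 0$, since $\rc_L\rc_H = 0$ (orthogonal idempotents attached to distinct sections) and $\rc_H$ acts as the identity on the $\Omega_H$-module $P_I$. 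Hence only the $L = H$ term contributes, giving an $\Omega_K$-linear isomorphism $P_{H,I}^\nabla(K)\cong (\rc_K\nabla\rc_H)\otimes_{\Omega_H}P_I$; since $\rc_K\nabla\rc_H = R\hom_\nabla(H,K)$ by the identification of $\nabla$ with its associated category (objects the sections in $\cC$, morphism space from $H$ to $K$ equal to $\rc_K\nabla\rc_H$), this is the assertion.

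The argument is formal once the module structures are matched; the only point needing a moment of care is precisely that matching, namely that in each of the identifications $\rc_K(\nabla\otimes_{\Omega_H}P_I) = (\rc_K\nabla)\otimes_{\Omega_H}P_I = (\rc_K\nabla\rc_H)\otimes_{\Omega_H}P_I$ the $\Omega_K$-action is nothing but left multiplication on the left tensor factor, so no further compatibility check is required. I do not expect any genuine obstacle: the case $K = H$ is already implicit in the proof of the previous proposition, where $\rc_H\nabla\rc_H\cong\Omega_H$ makes the right-hand side collapse to $P_I$, and the general case only replaces $\rc_H$ on the left by $\rc_K$.
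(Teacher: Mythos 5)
Your argument is correct and is exactly the intended one: the paper states the corollary as an immediate consequence of the identification $P_{H,I}^\nabla = \nabla\otimes_{\Omega_H}P_I$ from the preceding proof, and your unpacking (applying $\rc_K$, decomposing $\rc_K\nabla$ via the orthogonal idempotents $\rc_L$, and noting that only $\rc_K\nabla\rc_H = R\hom_\nabla(H,K)$ survives the tensor product since $\rc_H$ acts as the identity on $P_I$) is the standard way to make that explicit. The $\Omega_K$-module structures are matched correctly as left multiplication on the left tensor factor throughout.
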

 
This gives a characterization of projective indecomposable $R\cD^{\cC}_G$-modules via the isomorphism between 
$R\overline \cD_G^{\cC}$ and $\nabla^{\cC} _{G}(R)$. Alternatively we can define projective $R\cD_G^{\cC}$-modules 
as direct summands of free $R\cD_G ^{\cC}$-modules, which are defined as follows:

\begin{defn} 
For any section $(U,V)$ of $G$, let $P_{(U,V)}$ denote the $R\cD_G ^{\mathcal C}$-module
$$P_{(U,V)} (-) :=R\hom _{\cD _G } ( - , (U,V) )$$ with destriction and conjugation maps defined by composition. 
\end{defn}

By the Yoneda lemma these modules satisfy the property that if $(U, V) \in \mathcal C$, then for every 
$R\cD_G ^{\mathcal C}$-module $F$, 
$$ \hom _{R\cD_G ^{\mathcal C}} (P_{(U, V)} , F)\cong F (U/V) $$
This gives, in particular, that $P_{(U,V)}$ is a projective $RD_{G} ^{\mathcal C}$-module. 
Note that this definition works more generally for any EI-category. We will be using these definitions 
in the next section when we are writing projective resolutions for $R\cD_G ^{\cC}$-modules. 

\begin{rem} Note that to simplify the notation we write $F(U/V)$ for $F\bigl ( (U,V) \bigr ) $ when $F$ 
is an $R\cD_G$-module. We also write $F(G)$ for $F(G/1)$ to simplify the notation further.
\end{rem}


\section{Obstruction groups for the gluing problem}\label{sect:ObsGluing} 

In this section we introduce an obstruction theory for the gluing problem stated in Definition \ref{def:GluingProblem}. 
Let $G$ be a finite group, and let $\cD_G$ denote the orbit category of sections in $G$. Let $\underline{R}$ denote
the \emph{constant functor}, that is, the $R\cD_G$-module with values isomorphic to the trivial module $R$ 
at every section $(U,V)$, where all the destriction and 
conjugation maps are the identity maps. 

\begin{lem}\label{lem:isom1} The constant functor  $\underline R$ is isomorphic to $ P_{(G,1)}$. 
In particular, $\underline R$ is a projective $R\cD_G$-module, and there is an isomorphism 
$$\hom _{R\cD_G} (\underline R , F)\cong F ( G) $$ for every $R\cD_G$-module $F$.  
\end{lem}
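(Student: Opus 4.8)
The statement to prove is Lemma~\ref{lem:isom1}: the constant functor $\underline R$ on $\cD_G$ is isomorphic to the representable functor $P_{(G,1)} = R\hom_{\cD_G}(-,(G,1))$, and hence is projective with $\hom_{R\cD_G}(\underline R, F) \cong F(G)$. The plan is to produce an explicit natural isomorphism $P_{(G,1)} \to \underline R$ by evaluating at each section and checking it commutes with the structure maps (destrictions and conjugations); the "in particular" clauses are then immediate from Yoneda.

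\textbf{Step 1: identify the hom-sets.} First I would compute $\hom_{\cD_G}((U,V),(G,1))$ for an arbitrary section $(U,V)$. By Definition~\ref{def:OrbCatSec} this is $\{ Gg \mid g\in G,\ \leftexp{g}(U,V)\squ (G,1)\}$. But $\leftexp{g}(U,V)\squ (G,1)$ means $1\le \leftexp{g}V\le \leftexp{g}U\le G$, which holds for every $g$; moreover $Gg = G$ for all $g$, so there is exactly one morphism $(U,V)\to(G,1)$, call it $\ast_{(U,V)}$. Hence $P_{(G,1)}((U,V)) = R\cdot \ast_{(U,V)} \cong R$ is free of rank one, matching the value of $\underline R$.

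\textbf{Step 2: define the map and check naturality.} Define $\theta\colon P_{(G,1)} \to \underline R$ by sending the basis element $\ast_{(U,V)}$ to $1\in R$ at each section $(U,V)$. To see this is a morphism of $R\cD_G$-modules (a natural transformation of contravariant functors), I must check that for any morphism $Mg\colon (U,V)\to(M,L)$ in $\cD_G$, the square commutes: applying $P_{(G,1)}$ to $Mg$ sends $\ast_{(M,L)}\mapsto \ast_{(M,L)}\circ Mg = \ast_{(U,V)}$ (since $G\cdot g' \circ M g = G\cdot g'g = G$, the unique morphism), while $\underline R$ applied to $Mg$ is the identity on $R$. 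So both composites send the generator to $1$, and the square commutes. Since $\theta$ is an $R$-isomorphism on each object, it is an isomorphism of $R\cD_G$-modules.

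\textbf{Step 3: deduce the consequences.} Since $(G,1)$ lies in the collection of all sections, $P_{(G,1)}$ is a projective $R\cD_G$-module by the Yoneda property stated just before this lemma, hence so is $\underline R$. Finally, the Yoneda isomorphism $\hom_{R\cD_G}(P_{(G,1)},F)\cong F((G,1)) = F(G)$ transports along $\theta$ to give $\hom_{R\cD_G}(\underline R, F)\cong F(G)$ for every $R\cD_G$-module $F$. I do not anticipate a genuine obstacle here; the only point requiring care is Step~1 — verifying that the hom-set $\hom_{\cD_G}((U,V),(G,1))$ is genuinely a singleton, which uses both that $\leftexp{g}(U,V)\squ(G,1)$ is automatic and that the right coset $Gg$ collapses to $G$ for all $g$ — and checking in Step~2 that composition with $Mg$ really lands on the unique morphism rather than producing a sign or scalar.
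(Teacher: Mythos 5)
Your proposal is correct and follows essentially the same route as the paper: identify the unique morphism $(U,V)\to(G,1)$ at each section, check that these unique morphisms are carried to one another under precomposition, and deduce the projectivity and the isomorphism $\hom_{R\cD_G}(\underline R,F)\cong F(G)$ from the Yoneda property of the representable modules $P_{(U,V)}$. The only difference is that you spell out the singleton computation and the naturality square more explicitly than the paper does, which is fine.
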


\begin{proof}  For every $(U, V) \in \cD_G$, there is a unique morphism $f_{(U,V)} \in \hom _{\cD_G} ( (U,V), (G,1))$, 
which is the coset $G\cdot 1$, and these morphisms are mapped to each other under morphisms in $\cD_G$, 
meaning that, if $f: (U, V) \to (M, L)$ is a morphism in $\cD_G$, then $$f^* (f _{(M,L)})=f_{(M,L)} \circ f =f_{(U,V)},$$
where $f^*$ denotes the morphism induced by $f$ in $P_{(G,1)}$.  This gives the isomorphism 
$\underline R \cong P_{ (G, 1) }$. The second part follows from the properties of modules $P_{(U, V)}$ that were
explained in Section \ref{subsec:Modules}.
\end{proof}

Let $\cC$ be a collection of sections of $G$. In the rest of the section we assume that $\cC$ is closed 
under taking subsections, i.e., if $(M,L) \in \cC$ and $(U,V)  \squ (M,L)$, then $(U,V) \in \cC$. Let $J^{\cC}$ denote 
the subfunctor of $\underline R$ that has the value $R$ on the sections $(U,V)$ in $\cC$, and is equal to zero on 
the other sections. Let $T^{\cC}$ denote the quotient module $\underline R/J^{\cC}$.  Note that for any $R\cD_G$-module 
$F$, we have $$\hom _{R\cD_G} (J^{\cC} , F) \cong \varprojlim \limits _{(U,V) \in \cD^{\cC}_G} F(U/V).$$ The short exact 
sequence $0\to J^{\cC} \to \underline R \to T^{\cC} \to 0$ gives a 4-term exact sequence that takes the following form 
when we apply the isomorphisms above: 
\[
\xymatrix{0 \ar[r] & \hom_{R\cD_G} (T^{\cC}, F)  \ar[r] & F(G) \ar[r]^-{r^F_G} & \varprojlim \limits _{(U,V) \in \cD^{\cC}_G} 
F(U/V)  \ar[r] & \ext{}^1_{R\cD_G} (T ^{\cC} ,F)  \ar[r] & 0.}
\]
The map $r^F_G$ induced by the inclusion $J^{\cC} \to \underline R$ coincides with the detection map defined by 
$$r^F_G(f)= (\defres ^G _{U/V} f)_{(U,V)\in \cD_G ^{\cC}}. $$
This 4-term exact sequence can be considered as the solution for the gluing problem for a collection $\cC$. 
The obstruction group is given by $$ \obs ^{\cC} (F(G)) \cong \ext{}^1_{R\cD_G} (T ^{\cC},F) $$
and the uniqueness is determined by the group $\hom_{R\cD_G}(T^{\cC} ,F).$ 

\begin{rem} 
Gluing problems for more general collections of sections are considered in  \cite[Section 4]{BoucThev-Glue}. 
One of these collections is the collection $\cC$ of all sections $(U,V)$ where the quotient group $U/V$ is an 
elementary abelian $p$-group. In the case where $G$ is a $p$-group with $p$ odd, it is shown in 
\cite[Corollary 4.3]{BoucThev-Glue} that the detection map $r^F_G$ for this collection is an isomorphism 
for the biset functor $D_t$ of the torsion part of the Dade group. Another collection of sections considered in 
\cite{BoucThev-Glue} is the collection of the sections $(N_G(Q), Q)$ where $Q$ is a $p$-centric subgroup of $G$. 
Note that in this case, the family of sections is not closed under taking subsections. We discuss the gluing problem 
for such collections in the next section.
\end{rem}

We now focus on the case where $\cC$ is the collection of all sections $(U,V)$ of $G$ such that $V\neq 1$. 
In this case we denote the $R\cD_G$-modules $T^{\cC}$ and $J^{\cC}$ simply by $T$ and $J$.  

\begin{pro}\label{pro:isom2} 
Let $F$ be a destriction functor or a biset functor  for $G$ over $R$. Then the map
\begin{eqnarray*}
\phi: \hom_{R\cD_G} (J,F)&\to& \varprojlim_{1<H\le G} F(N_G(H)/H) 
\end{eqnarray*}
defined by $\phi(f)= (f(1_{N_G(H)/H}))_{1< H\le G}$ is an isomorphism of $R$-modules. 
\end{pro}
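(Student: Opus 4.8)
The plan is to show that $\phi$ is a well-defined isomorphism by constructing an explicit inverse, using the description of $J$ as a subfunctor of the constant functor $\underline R$ together with the identification, from Proposition \ref{pro:Equivalence}, of $R\cD_G$-modules with destriction functors.

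First I would unwind what a morphism $f\in\hom_{R\cD_G}(J,F)$ amounts to. Since $J$ has value $R$ on each section $(U,V)$ with $V\neq 1$ and value $0$ elsewhere, $f$ is determined by the collection of $R$-linear maps $f_{(U,V)}\colon R\to F(U/V)$, i.e.\ by the elements $f(1_{U/V})\in F(U/V)$ indexed by sections with $V\neq 1$, subject to naturality with respect to all conjugations and destrictions in $\cD_G^{\cC}$ (where $\cC$ is the collection of sections with $V\neq 1$). Naturality with respect to a conjugation $\rc^g$ gives exactly the conjugation-invariance condition, and naturality with respect to a destriction $\des^{M/L}_{U/V}$ forces $f(1_{U/V})$ to be compatible with $f(1_{M/L})$ via the destriction maps of $F$. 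In particular, restricting attention to the sections of the form $(N_G(H),H)$ for $1<H\le G$, the tuple $(f(1_{N_G(H)/H}))_{1<H\le G}$ automatically lies in the inverse limit $\varprojlim_{1<H\le G} F(N_G(H)/H)$ as defined in Definition \ref{def:GluingProblem}: conjugation invariance (i) holds because $\leftexp{g}(N_G(H),H)=(N_G(\leftexp gH),\leftexp gH)$ and $f$ respects the conjugation morphism $\rc^g$ in $\cD_G$; destriction invariance (ii) holds because for $K\nor H$ one has $(N_G(H,K),H)\squ(N_G(K),K)$ and $(N_G(H,K),H)\squ(N_G(H),H)$, and $f$ respects both the destriction $\des^{N_G(K)/K}_{N_G(H,K)/H}$ and the destriction (= restriction, since the normal subgroup does not change) $\res^{N_G(H)/H}_{N_G(H,K)/H}$. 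Applying $f$-naturality to the commuting diagram of these two morphisms into $(N_G(H,K),H)$ yields precisely condition (ii). This shows $\phi$ is well-defined and $R$-linear; injectivity will follow once we see that every $f(1_{U/V})$ is recovered from the $f(1_{N_G(H)/H})$.

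The key point — and the main technical step — is the following: for an arbitrary section $(U,V)$ with $V\neq 1$, the value $f(1_{U/V})$ is determined by $f(1_{N_G(V)/V})$ via $f(1_{U/V})=\res^{N_G(V)/V}_{U/V}\bigl(f(1_{N_G(V)/V})\bigr)$, because $(U,V)\squ(N_G(V),V)$ and the corresponding morphism in $\cD_G$ is a plain restriction (again $V$ is unchanged). Thus the whole natural transformation $f$ is rigidly controlled by its values on the ``normalizer sections'' $(N_G(H),H)$, and in fact only the $V\neq 1$ part matters. To build the inverse map $\psi\colon \varprojlim_{1<H\le G}F(N_G(H)/H)\to\hom_{R\cD_G}(J,F)$, given a compatible tuple $(a_H)_{1<H\le G}$ I would define, for a section $(U,V)$ with $V\neq 1$,
\[
\psi(a)_{(U,V)}(1_{U/V}) := \res^{N_G(V)/V}_{U/V}(a_V),
\]
and check that this assignment is natural with respect to all conjugations and all destrictions in $\cD_G^{\cC}$. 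Conjugation naturality is immediate from condition (i). For destriction naturality, a morphism $(U,V)\to(M,L)$ in $\cD_G$ with $L\le V\le U\le M$ factors (up to conjugation) as a deflation from $(M,L)$ to $(M/L,V/L)$ composed with... — more efficiently, one factors the generic destriction $\des^{M/L}_{U/V}$ through the section $(N_G(V),V)$ and $(N_G(L),L)$ and reduces the required identity to condition (ii) applied to the pair $V\nor$ (suitable overgroup), using the transitivity relations (R2) and the interchange relation (R3) of the destriction algebra. The identities $\phi\psi=\mathrm{id}$ and $\psi\phi=\mathrm{id}$ are then routine: $\phi\psi(a)_H=\res^{N_G(H)/H}_{N_G(H)/H}(a_H)=a_H$, and $\psi\phi(f)_{(U,V)}=\res^{N_G(V)/V}_{U/V}(f(1_{N_G(V)/V}))=f(1_{U/V})$ by the rigidity observation above.

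The step I expect to be the main obstacle is verifying destriction naturality of $\psi$ in full generality — i.e.\ that $\res^{N_G(V)/V}_{U/V}(a_V)$ and $\res^{N_G(L)/L}_{M/L}(a_L)$ are matched by the destriction map $\des^{M/L}_{U/V}$ of $F$ for an arbitrary section inclusion $(U,V)\squ(M,L)$. This requires decomposing an arbitrary destriction in the destriction algebra into the "normalizer destrictions" that appear in the inverse-limit compatibility condition (ii), which in turn only directly addresses the case where the overgroup is a normalizer and the quotient passes from $K$ to $H$ with $K\nor H$. Carefully chasing the relations (R1)--(R3) of Definition \ref{pro:DesAlg} to reduce the general case to condition (ii) is where the real work lies; once this is done, well-definedness, the two inverse identities, and $R$-linearity are all formal. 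I would also remark that the same argument, read through the equivalence $R\overline{\cD_G^{\cC}}\cong\nabla_G^{\cC}(R)$, shows $\hom_{R\cD_G}(J,F)\cong\varprojlim_{(U,V)\in\cD_G^{*}}F(U/V)$ directly, which is the form used in Theorem \ref{thm:Main1}.
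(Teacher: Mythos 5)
Your construction is exactly the paper's: the same $\phi$, the same inverse
$\psi(a)_{(U,V)}(1_{U/V})=\res^{N_G(V)/V}_{U/V}(a_V)$, and the same identification of
destriction-naturality of $\psi$ as the substantive point. But that one point is the entire
content of the proposition, and you leave it as ``where the real work lies,'' with a plan that
is both harder and slightly misaimed: no decomposition of a general destriction into
``normalizer destrictions'' via (R1)--(R3) is needed, and the instance of condition (ii) you
want is not ``$V\nor$ (suitable overgroup)'' --- in the relevant application $V$ is the
\emph{overgroup} and $L$ the normal subgroup. As written, the proposal has a genuine gap at
its acknowledged crux.

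Here is the short completion, which is what the paper does. For $(U,V)\squ(M,L)$ one computes
\[
\psi(a)\bigl(\des^{M/L}_{U/V}1_{M/L}\bigr)=\res^{N_G(V)/V}_{U/V}a_V,
\qquad
\des^{M/L}_{U/V}\,\psi(a)(1_{M/L})=\des^{M/L}_{U/V}\res^{N_G(L)/L}_{M/L}a_L
=\des^{N_G(L)/L}_{U/V}a_L,
\]
so the identity to prove is $\res^{N_G(V)/V}_{U/V}a_V=\des^{N_G(L)/L}_{U/V}a_L$. Apply
condition (ii) of Definition \ref{def:GluingProblem} to $L\nor V$ (i.e.\ $K=L$, $H=V$):
$\des^{N_G(L)/L}_{N_G(V,L)/V}a_L=\res^{N_G(V)/V}_{N_G(V,L)/V}a_V$. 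Since $V\nor U$ gives
$U\le N_G(V)$ and $U\le M\le N_G(L)$, we have $U\le N_G(V)\cap N_G(L)=N_G(V,L)$; restricting
both sides from $N_G(V,L)/V$ to $U/V$ and using transitivity (R2) yields the required
identity. With this inserted your argument coincides with the paper's proof; the rest of what
you wrote (well-definedness of $\phi$, the rigidity observation giving injectivity, and the
two inverse identities) is correct and matches the paper.
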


\begin{proof}  It is clear that the map  $\phi$ is additive. To prove that it is well-defined, we need to show that 
$\phi(f)$ is a gluing data for every $f\in \hom _{R\cD_G} (J, F)$, i.e., we need to verify that $\phi (f)$ satisfies 
the conjugation and destriction invariance conditions given in Definition \ref{def:GluingProblem}. The conjugation 
invariance of $\phi(f)$ is trivial and we leave the justification to the reader. To prove the destriction invariance, 
note that for every $K\nsub H\le G$ we have 
\[
 \des^{N_G(H)/H}_{N_G(H, K)/H} 1_{N_G(H)/H}  =1_{N_G(H, K)/H}=\des^{N_G(K)/K}_{N_G(H, K)/H} 1_{N_G(K)/K}, 
 \]
and hence the destriction invariance follows from the fact that $f$ is an $R\cD_G$-module homomorphism.

Now let
\[
\psi: \varprojlim_{1<H\le G} F(N_G(H)/H)\rightarrow\hom_{R\cD_G} (J,F)  
\]
be the map defined by
$\psi \bigl ( (f_H)_{1< H\le G} \bigr )=   (1_{K/L}\mapsto \res^{N_G(L)/L}_{K/L}f_L).$
This map is additive since the restriction map is additive. Moreover it is clear that $\phi$ and $\psi$ are inverse to
each other. Therefore it remains to check that the map $\psi$ is well-defined. Let $f=(f_H)_{1<H\le G}$ be a gluing 
data. We have to prove that $\psi_f := \psi(f)$ is a morphism of $R\cD_G$-modules. By the conjugation invariance 
of the gluing data $f$, it is clear that $\psi_f$ commutes with conjugations. To prove that it also commutes with 
destrictions, let $U/V\squ M/L$.  Then we have
\[
\psi_f(\des^{M/L}_{U/V} 1_{M/L}) = \psi_f(1_{U/V}) = \res^{N_G(V)/V}_{U/V} f_V.
\]
We also have
\[
\des^{M/L}_{U/V} \psi_f(1_{M/L}) = \des^{M/L}_{U/V}\res^{N_G(L)/L}_{M/L} f_L = \des^{N_G(L)/L}_{U/V} f_L.
\]
Thus, it is sufficient to show that
\begin{equation}\label{eqn:needed}
\res^{N_G(V)/V}_{U/V} f_V =  \des^{N_G(L)/L}_{U/V} f_L.
\end{equation}
This follows from the destriction invariance of the gluing data. Indeed, since $L\nsub V$, the destriction 
invariance gives
\[
\des^{N_G(L)/L}_{N_G(V, L)/V}f_L = \res^{N_G(V)/V}_{N_G(V, L)/V}f_V.
\]
Moreover, since $L\nsub U$, we have $U \le N_G(V)\cap N_G(L)= N_G (V, L)$. Thus the equality 
(\ref{eqn:needed}) holds, as required.  
\end{proof}

As a consequence of the above result we obtain the following.
 
\begin{pro}\label{pro:main1}
Let $F$ be a destriction functor or a biset functor for $G$. Let $\cD_G$ denote the orbit category of sections over 
the collection of all sections in $G$, and let $T$ denote the restriction of the constant functor to the set of all sections
$(U, V)$ with $V=1$. Then, as $R$-modules, we have the following isomorphisms
\[\obs(F(G)) \cong \ext_{R\cD_G} ^1 (T, F)   \text{         and         } \ 
\krn (r^F_G) \cong \hom_{R\cD_G}(T,F).\]
\end{pro}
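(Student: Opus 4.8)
The plan is to reduce Proposition~\ref{pro:main1} to the general four-term exact sequence established earlier in this section, applied to the specific collection $\cC$ consisting of all sections $(U,V)$ with $V\neq 1$. First I would note that this $\cC$ is indeed closed under taking subsections: if $(U,V)\squ(M,L)$ then $L\le V$, so $L\neq 1$ forces $V\neq 1$. Hence the machinery around the short exact sequence $0\to J^{\cC}\to \underline R\to T^{\cC}\to 0$ applies verbatim, with $J^{\cC}=J$ and $T^{\cC}=T$ in the notation introduced just before the proposition, and $T$ is exactly the constant functor restricted to the sections with $V=1$ as claimed (since $\underline R/J$ vanishes on all sections with $V\neq 1$ and equals $R$ elsewhere). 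This yields the exact sequence
\[
\xymatrix{0 \ar[r] & \hom_{R\cD_G} (T, F)  \ar[r] & F(G) \ar[r]^-{r^F_G} & \varprojlim \limits _{(U,V) \in \cD^{\cC}_G} F(U/V)  \ar[r] & \ext{}^1_{R\cD_G} (T ,F)  \ar[r] & 0.}
\]

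The second step is to identify the two outer inverse-limit terms. By Proposition~\ref{pro:isom2} there is an isomorphism of $R$-modules $\hom_{R\cD_G}(J,F)\cong \varprojlim_{1<H\le G} F(N_G(H)/H)$, while the remark just before Proposition~\ref{pro:isom2} records that $\hom_{R\cD_G}(J^{\cC},F)\cong \varprojlim_{(U,V)\in\cD^{\cC}_G}F(U/V)$ for any subsection-closed $\cC$; combining these gives the isomorphism
\[
\varprojlim \limits _{(U,V) \in \cD^{\cC}_G} F(U/V) \;\cong\; \varprojlim_{1<H\le G} F(N_G(H)/H)
\]
asserted in Theorem~\ref{thm:Main1}. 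I would then check that this isomorphism is compatible with the detection maps, i.e.\ that the composite of $r^F_G\colon F(G)\to\varprojlim_{(U,V)}F(U/V)$ with the isomorphism $\phi$ of Proposition~\ref{pro:isom2} coincides with the map $f\mapsto(\defres^G_{N_G(H)/H}f)_{1<H\le G}$ from Definition~\ref{def:GluingProblem}. This is the place where a small computation is unavoidable: unwinding the definitions, $r^F_G(f)$ has $(U,V)$-component $\defres^G_{U/V}f$, and under $\hom_{R\cD_G}(J,F)\to\varprojlim F(N_G(H)/H)$ the element corresponding to $r^F_G(f)$ sends $1_{N_G(H)/H}$ to $\defres^G_{N_G(H)/H}f$, which is precisely $f_H$. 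Given this compatibility, the image of $r^F_G$ inside $\varprojlim_{1<H\le G}F(N_G(H)/H)$ is the set of tuples admitting a solution, so its cokernel is by definition $\obs(F(G))$, and its kernel is the kernel of the detection map.

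The final step is bookkeeping on the four-term sequence: the cokernel of $r^F_G$ is $\ext^1_{R\cD_G}(T,F)$ and simultaneously, by the previous paragraph, equals $\obs(F(G))$; the kernel of $r^F_G$ is $\hom_{R\cD_G}(T,F)$ and simultaneously equals $\krn(r^F_G)$ in the sense of Definition~\ref{def:GluingProblem}. This gives both displayed isomorphisms. I expect the only genuine obstacle to be the compatibility check in the second step — verifying that the abstract map induced by $J\hookrightarrow\underline R$ on $\hom$-groups matches, term by term under $\phi$ and Lemma~\ref{lem:isom1}, the concrete destriction-detection map $f\mapsto(\defres^G_{N_G(H)/H}f)_H$; everything else is a direct application of results already in hand.
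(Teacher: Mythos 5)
Your proposal is correct and follows essentially the same route as the paper: the paper's proof likewise places the four-term sequence for $T$ over the subsection-closed collection $\{(U,V):V\neq 1\}$ above the gluing sequence, uses Lemma \ref{lem:isom1} and Proposition \ref{pro:isom2} for the two middle vertical isomorphisms, observes that the middle square commutes (your compatibility check), and extracts the two outer isomorphisms via the 5-lemma. No gaps.
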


\begin{proof} Consider the diagram
$$\xymatrixcolsep{1pc}\xymatrix{0 \ar[r] & \hom _{R\cD_G} (T, F) \ar[r] \ar@{-->}[d]& 
\hom_{R\cD_G} (\underline R, F) \ar[r]^{\iota^*} \ar[d]^{\cong} & \hom_{R\cD_G} (J, F) \ar[d]_{\phi}^{\cong}  
\ar[r] & \ext ^1 _{R\cD_G} (T, F) \ar[r] \ar@{-->}[d] & 0 \\
0 \ar[r] & \ker (r^F_G) \ar[r]  & F(G)  \ar[r]^-{r^F_G} & \varprojlim \limits _{1<H\le G} F(N_G(H)/H)  \ar[r]  & 
 \obs (F(G)) \ar[r]  & 0 \\}
$$
where the vertical maps in the middle are the isomorphisms defined in Lemma \ref{lem:isom1} and Proposition 
\ref{pro:isom2}. The map $\iota^*$ is the map induced by the inclusion $\iota: J\to \underline R$.  The square 
in the middle commutes, so it induces two maps, one on each end, which are isomorphisms by 
5-lemma.
\end{proof}
 
Note that Propositions \ref{pro:isom2}  and \ref{pro:main1} together complete the proof of Theorem \ref{thm:Main1}. 
We now give two examples to illustrate how the obstruction groups $\obs(F(G))$ can be calculated using specific projective
resolutions. 

\begin{ex} 
Let $p$ be a prime, $n \geq 1$, and $G=C_{p^n}$ be the cyclic group of order $p^n$. Let $Z$ denote the cyclic subgroup of 
order $p$ in $G$. There is a  projective resolution of  $T$ as an $R\cD_G$-module which is of the 
form $$0 \to P_{(G,Z)} \to P_{(G,1)} \to T\to 0.$$ 
Note that in this case $J$ is isomorphic to the projective module $P_{(G,Z)}$. For a biset functor $F$
we have $\ext_{R\cD_G} ^i (T, F)=0$  for $i \geq 2$, and in low dimensions we have a short exact sequence 
of the form
$$ 0 \to \hom _{R\cD_G} (T, F) \to F(G) \maprt{\varphi} F(G/Z) \to \obs(F(G))\to 0,$$
where $\varphi$ is the deflation map $\defl ^G _{G/Z}$. Since $\defl^G _{G/Z} \infl ^G _{G/Z} =\mathrm{id} _{F(G/Z)}$, 
we obtain that in this case  $\obs (F(G))=0$. 

The kernel group $\ker r^F_G$ is not zero in general.  If $F=B^*$ is the dual group of the Burnside group functor, 
then we can consider elements $f\in B^* (G)$ as functions from the set of subgroups of $G$ to integers, which 
are constant on conjugacy classes of subgroups. Under this identification, the map $\varphi $ takes $f$ to $f'$, 
where  $f'(H/Z) = f(H)$ for every $Z\leq H \leq G$. The kernel of $\varphi$ is isomorphic to $\mathbb Z$ generated 
by $e^G_1$, the function with values $e^G_1(H)=0$ for $H \neq 1$, and 
$e^G_1(1)=1$, which is an idempotent element in $B^* (G)$. \qed
\end{ex}

\begin{ex}\label{ex:D8}  
Let $G=D_8$ be the dihedral group of order $8$. Let $E_1, E_2$ denote two distinct elementary abelian subgroups of rank 2, 
and let $Z$ denote the center of $G$, and let $Q_1, Q_2$ denote two distinct representatives of the conjugacy classes of the non-central subgroups 
of order $2$.  A projective resolution for $T$ as an $R\cD_G$-module can be given as 
$$0 \to P_{(E_1, E_1)} \oplus P_{(E_2, E_2)} \to P_{(E_1, Q_1)} \oplus P_{(E_2, Q_2)} \oplus P_{(G, Z)} \to P _{(G, 1)} 
\to T \to 0$$ and the cochain complex $\hom _{R\cD_G} (P_*, F)$ is of the form 
$$0 \to F(G) \to F(G/Z) \oplus F(E_1/Q_1) \oplus F(E_2/Q_2)  \to  F(E_1/E_1) \oplus F(E_2/E_2) \to 0.$$
From this we obtain that $\ext_{R\cD_G} ^i (T, F) =0$  for $i \geq 3$.
Later we will see that when $G$ is a $p$-group, $\ext_{R\cD_G} ^i (T, F)=0$ for $i \geq \rk (G)+1$ (see Proposition \ref{pro:Oliver}). 
Note that the projective resolution that we give above comes from a projective resolution over the Quillen category (see Example \ref{ex:D8Chp5}). 
\qed
\end{ex}


\section{Reduction to smaller collections}\label{sect:Reduction}

Let $G$ be a finite group and $\cC$ be an arbitrary collection of sections of $G$. For an $R\cD_G$-module $F$, the ext-group 
$\ext^n_{R\cD_G} (T^{\cC} , F)$ is defined to be the $n$-th cohomology group of the cochain complex 
$\hom _{R\cD_G} (P_*, F)$ where  $P_*$ is a projective resolution
\begin{equation}\label{eqn:resolution}
\xymatrix{ \cdots \ar[r] & P _2 \ar[r] & P_1 \ar[r] & P_0 \ar[r] & T^{\cC} \ar[r] &  0.}
\end{equation}
of $T^{\cC}$ as an $R\cD_G$-module. 

The constant functor $\underline R \cong P_{(G, 1)}$ is a projective $R\cD_G$-module, and  $T^{\cC}=\underline R /J^{\cC}$ by definition,
hence we may assume that the first step of a projective resolution for $T^{\cC}$ is the sequence 
$0 \to J ^{\cC} \to \underline R \to T ^{\cC} \to 0$. If we take a projective resolution 
$Q_* \to J ^{\cC}$ for $J^{\cC}$ as an $R\cD_G$-module, then by adding the constant functor $\underline R$, 
we obtain a projective resolution for $T^{\cC}$ of the form
\begin{equation}\label{eqn:resolution2}
\xymatrix{ \cdots \ar[r] & Q_2 \ar[r] & Q_1 \ar[r] & Q_0 \ar[r] &  \underline R \ar[r]  & T^{\cC} \ar[r] &  0.}
\end{equation}
From this we can conclude that for every $R\cD_G$-module $F$,
$$\ext _{R\cD_G} ^n (T ^{\cC}, F) \cong \ext_{R\cD_G } ^{n-1} (J^{\cC} , F)$$
for $n \geq 2$. If $\cC$ is a collection closed under taking subsections, a projective resolution of $J^{\cC}$ 
as an $R\cD_G ^{\cC}$-module is also a projective resolution of $J^{\cC}$ as an $R\cD_G$-module. 
This means that we can replace the ext-group above with the ext-group $\ext ^{n-1}_{R\cD_G^{\cC} } (J^{\cC}, F)$. 

As an $R\cD_G^{\cC}$-module
$J^{\cC}$ is equal to the constant functor $\underline R$ of the category $\cD_G ^{\cC}$ (which is also equal to, with abuse of notation,  the restriction of the constant functor $\underline R$ of the category $\cD_G$ to the subcategory $\cD_G^{\cC}$). Recall that the cohomology of the category
$\cD_G ^{\cC}$ with coefficients in an $R\cD_G^{\cC}$-module $F$  is defined by 
$$H^* (\cD_G ^{\cC} ; F ) := \ext ^* _{R\cD_G ^{\cC} } (\underline R, F)  $$
where $\underline R$ denotes the constant functor for the category $\cD_G^{\cC}$.
From these we conclude the following.

\begin{lem}\label{lem:ExtCoh} 
Let $\cC$ be a collection of sections of $G$ closed under taking subsections. Then for every $R\cD_G$-module 
$F$  we have isomorphisms 
$$\ext _{R\cD_G} ^n (T ^{\cC}, F) \cong \ext_{R\cD_G } ^{n-1} (J^{\cC} , F) \cong H^{n-1}( \cD_G ^{\cC} ; F)$$ 
for every $n \geq 2$.  
\end{lem}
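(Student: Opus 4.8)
The plan is to assemble the lemma from the three observations already laid out in the surrounding text, checking carefully that each isomorphism is natural enough to be chained. First, I would fix a collection $\cC$ closed under taking subsections and an $R\cD_G$-module $F$. The starting point is the short exact sequence $0 \to J^{\cC} \to \underline R \to T^{\cC} \to 0$ of $R\cD_G$-modules, together with the fact from Lemma \ref{lem:isom1} that $\underline R \cong P_{(G,1)}$ is projective. Applying $\hom_{R\cD_G}(-,F)$ and using the long exact sequence for $\ext$, the projectivity of $\underline R$ forces $\ext^k_{R\cD_G}(\underline R, F) = 0$ for all $k \geq 1$, so the connecting homomorphism gives isomorphisms $\ext^n_{R\cD_G}(T^{\cC}, F) \cong \ext^{n-1}_{R\cD_G}(J^{\cC}, F)$ for every $n \geq 2$. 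This is exactly the first isomorphism, and it is precisely the dimension-shift already noted after \eqref{eqn:resolution2}; I would just spell it out as a consequence of the long exact sequence rather than reconstructing the resolution by hand.

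The second isomorphism is where the hypothesis that $\cC$ is closed under subsections does the work. The claim is that $\ext^{n-1}_{R\cD_G}(J^{\cC}, F) \cong \ext^{n-1}_{R\cD_G^{\cC}}(J^{\cC}, F)$, i.e.\ that computing $\ext$ of $J^{\cC}$ inside the larger category $\cD_G$ agrees with computing it inside the full subcategory $\cD_G^{\cC}$. The key point, already asserted in the text, is that if $Q_* \to J^{\cC}$ is a projective resolution by $R\cD_G^{\cC}$-modules, then the $Q_i$ are still projective as $R\cD_G$-modules. I would justify this by recalling that $\cD_G^{\cC}$ is a full subcategory and that $\cC$ being closed under subsections means the representable modules $P_{(U,V)}$ for $(U,V)\in\cC$ have the same values whether regarded over $\cD_G$ or $\cD_G^{\cC}$ (for a target section $(M,L)\squ(U,V)$, any such $(M,L)$ lies in $\cC$ automatically); hence a projective $R\cD_G^{\cC}$-module, being a summand of a sum of such representables, extends to a projective $R\cD_G$-module with unchanged Hom-groups into any $F$. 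Then $\hom_{R\cD_G}(Q_*, F) = \hom_{R\cD_G^{\cC}}(Q_*, F|_{\cD_G^{\cC}})$ as cochain complexes, giving the isomorphism of cohomology in each degree.

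Finally, I would identify $J^{\cC}$, viewed as an $R\cD_G^{\cC}$-module, with the constant functor $\underline R$ of the category $\cD_G^{\cC}$ — this is immediate from the definition of $J^{\cC}$ as the submodule of $\underline R$ supported on $\cC$ with all structure maps the identity — and invoke the definition $H^*(\cD_G^{\cC}; F) := \ext^*_{R\cD_G^{\cC}}(\underline R, F)$. Chaining the three isomorphisms yields $\ext^n_{R\cD_G}(T^{\cC}, F) \cong \ext^{n-1}_{R\cD_G}(J^{\cC}, F) \cong H^{n-1}(\cD_G^{\cC}; F)$ for $n \geq 2$, as claimed. I expect the only genuinely delicate step to be the middle one: one must be slightly careful that "projective over the subcategory implies projective over the whole category with the same Hom" really uses fullness of $\cD_G^{\cC}$ together with closure of $\cC$ under subsections, rather than being a formal triviality — if $\cC$ were not downward closed, a representable $P_{(U,V)}$ over $\cD_G^{\cC}$ would be a genuine truncation of the one over $\cD_G$ and the Hom-groups could differ. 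Everything else is bookkeeping with long exact sequences and the definitions recalled above.
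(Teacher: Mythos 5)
Your proposal is correct and follows essentially the same route as the paper: dimension-shifting along $0 \to J^{\cC} \to \underline R \to T^{\cC} \to 0$ using the projectivity of $\underline R \cong P_{(G,1)}$, observing that a projective resolution of $J^{\cC}$ over $R\cD_G^{\cC}$ remains projective over $R\cD_G$ because the representables $P_{(U,V)}$ with $(U,V)\in\cC$ are supported on $\cC$ when $\cC$ is closed under subsections, and finally identifying $J^{\cC}$ with the constant functor on $\cD_G^{\cC}$. Your elaboration of the middle step (which the paper only asserts) is the right justification, so nothing further is needed.
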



In low dimensions we have the $4$-term exact sequence as before:
\[
\xymatrix{0 \ar[r] & \hom_{R\cD_G} (T^{\cC}, F)  \ar[r] & F(G) \ar[r]^-{r^F_G} & \varprojlim \limits _{(U,V) \in 
\cD^{\cC}_G} F(U/V)  \ar[r] & \ext{}^1_{R\cD_G} (T ^{\cC} ,F)  \ar[r] & 0.}
\] 
Note that the inverse limit term is isomorphic to $\hom _{R\cD_G} (J^{\cC}; F)\cong H^0 (\cD_G ; F)$. To extend this short 
exact sequence to an arbitrary collection $\cC$ of subsections, we define  the reduced cohomology of $\cD_G^{\cC}$ as follows.  

\begin{defn}\label{defn:Reduced} 
Let $\cC$ be an arbitrary collection of sections of $G$. For an $R\cD_G$-module $F$, the reduced cohomology 
$\widetilde H^n (\cD_G ^{\cC}; F)$ of the category $\cD_G ^{\cC}$ is defined as the usual cohomology group 
$H^n(\cD_G ^{\cC}; F)$ for $n\geq 1$ and, at dimensions $n=-1$ and $n=0$, it is defined as the kernel and cokernel of the map $r_G^F$. 
We have an exact sequence of the form
\[
\xymatrix{0 \ar[r] &  \widetilde H^{-1} (\cD_G^{\cC} ; F ) \ar[r] &  F(G) \ar[r]^-{r^F_G} &  \varprojlim \limits _{(U,V) 
\in \cD^{\cC}_G} F(U/ V)  \ar[r] & \widetilde H^0 (\cD_G ^{\cC} ; F) \ar[r] & 0 }
\] 
where $r^F_G$ is the map that takes $f\in F(G/1)$ to the tuple $(\defres ^G _{U/V} f)_{U/V\in \cC} $.
\end{defn}

Alternatively, we can define the reduced cohomology as the cohomology groups of a cochain complex. Given 
a collection of sections $\cC$ of a finite group $G$, let $\cC'$ denote the set of all sections $(M,L)$ such that  $(U, V)\in \cC$ for some $(U,V) \squ (M,L)$.  Note that when $\cC$ is nonempty (which we 
always assume), $\cC'$ includes the section $(G,1)$. Let $J^{\cC'}$ denote the constant functor for the category 
$\cD_G ^{\cC'}$. Then $J^{\cC}$ is a submodule of $J^{\cC'}$ as an $R\cD_G ^{\cC'}$-module, 
in particular, the quotient module $J^{\cC'} /J^{\cC}$
is defined.

If $Q_* \to J^{\cC}$ is a projective resolution of $J^{\cC}$ as an $R\cD_G ^{\cC}$-module, then splicing to it the short
exact sequence $0\to J^{\cC} \to J^{\cC'} \to J^{\cC'}/J^{\cC} \to 0$, we obtain a resolution of  $J^{\cC'}/J^{\cC}$ as an $R\cD_G ^{\cC'}$-module of the form
\begin{equation}\label{eqn:resolutionJC'}
\xymatrix{ \cdots \ar[r] & Q_2 \ar[r] & Q_1 \ar[r] & Q_0 \ar[r] &  J^{\cC'}  \ar[r]  & J^{\cC'}/J^{\cC} \ar[r] &  0.}
\end{equation}
Since $(G,1)$ is in the collection  $\cC'$, the constant functor $J^{\cC'}$ is a projective $R\cD_G ^{\cC'}$-module, hence the above
resolution is a projective resolution of $J^{\cC'}/J^{\cC}$ as an $R\cD_G ^{\cC'}$-module. 
Applying $\hom _{R\cD_G ^{\cC'} } (-, F)$ to this projective resolution, we get a cochain complex 
\[
\xymatrix{0 \ar[r] & \hom _{R\cD_G ^{\cC'} } ( J^{\cC'}, F) \ar[r] & \hom _{R\cD_G ^{\cC'} } ( Q_0, F) \ar[r] &
\hom _{R\cD_G ^{\cC'} } ( Q_1, F) \ar[r] & \cdots} 
\]
where
$$\hom _{R\cD_G ^{\cC'} } ( J^{\cC'}, F) \cong F(G/1).$$
It is now easy to see that the cohomology of this cochain complex is isomorphic to the reduced cohomology of 
the category $\cD_G ^{\cC}$. This shows that the reduced cohomology $\widetilde H^n (\cD_G ^{\cC}; F)$
of the category $\cD_G^{\cC}$ can be described as the cohomology of a cochain complex defined  as above.

\begin{rem} 
If $\cC$ is a collection that is closed under taking subsections, then $\cC'$ is the collection of all sections of $G$.
In this case $J^{\cC'}/ J^{\cC }=T^{\cC}$, hence  the reduced cohomology 
$\widetilde H^n (\cD_G ^{\cC} ; F)$ is isomorphic to the ext-group $\ext^{n+1} _{\cD_G} (T^{\cC} , F)$ 
for $n \geq -1$. For an arbitrary collection the reduced cohomology is no longer isomorphic to the ext-groups of $T^{\cC}$, 
instead it is isomorphic to the ext-group  $$\ext^{n+1} _{\cD_G^{\cC'}} (J^{\cC'}/ J^{\cC} , F).$$ 
\end{rem}

For an arbitrary collection $\cC$ we have the following version of our main result in the previous section.

\begin{pro}\label{pro:ReducedHom} 
Let $\cC$ be an arbitrary collection of sections of $G$, and $F$ be a destriction functor or a biset functor 
for $G$ over $R$. Then the obstruction group  $Obs ^{\cC} (F)$ for the gluing problem over $\cC$ is isomorphic 
to the reduced cohomology group $\widetilde H^0 (\cD_G ^{\cC}; F)$ and the uniqueness is determined by 
$\widetilde H^{-1} (\cD_G ^{\cC} ; F)$. 
\end{pro}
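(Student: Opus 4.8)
The plan is to unwind the definition of the gluing problem over an arbitrary collection $\cC$ and match it, term by term, with the exact sequence of Definition \ref{defn:Reduced}; the only step with real content is identifying the set of gluing data over $\cC$ with an inverse limit over the orbit category of sections $\cD_G^{\cC}$. Generalizing Definition \ref{def:GluingProblem}, a gluing datum for $F(G)$ over $\cC$ is a family $(f_{(U,V)})_{(U,V)\in\cC}$ with $f_{(U,V)}\in F(U/V)$ satisfying conjugation invariance, $\leftexp{g}{f_{(U,V)}}=f_{\leftexp{g}{(U,V)}}$ for all $g\in G$ (so in particular $f_{(U,V)}$ is fixed by $\aut_{\cD_G}((U,V))=N_G(U,V)/U$), and destriction invariance along each containment $(U,V)\squ(M,L)$ of sections of $\cC$. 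First I would observe, from the description of morphisms in Definition \ref{def:OrbCatSec}, that every morphism $Mg\colon(U,V)\to(M,L)$ of $\cD_G^{\cC}$ factors as a conjugation isomorphism $(U,V)\to\leftexp{g}{(U,V)}$ followed by the destriction morphism $\leftexp{g}{(U,V)}\to(M,L)$ associated to $\leftexp{g}{(U,V)}\squ(M,L)$, and conversely that conjugations and destrictions are morphisms of $\cD_G^{\cC}$. Hence a family $(f_{(U,V)})$ is a gluing datum over $\cC$ if and only if it is compatible with all morphisms of $\cD_G^{\cC}$, i.e.
$$\{\,\text{gluing data for }F(G)\text{ over }\cC\,\}\ \cong\ \varprojlim_{(U,V)\in\cD_G^{\cC}}F(U/V),$$
and under this identification the detection map $f\mapsto(\defres^G_{U/V}f)_{(U,V)\in\cC}$ is exactly the map $r^F_G$ of Definition \ref{defn:Reduced}. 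This is the analogue for an arbitrary collection of sections of Proposition \ref{pro:isom2}, with a formally identical proof, and is if anything more transparent since here the sections themselves index the collection.

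Granting this, the gluing problem over $\cC$ asks, for a given datum $(f_{(U,V)})\in\varprojlim_{(U,V)\in\cD_G^{\cC}}F(U/V)$, for an element $f\in F(G)$ with $r^F_G(f)=(f_{(U,V)})$; a solution exists precisely when the datum lies in $\mathrm{im}(r^F_G)$, so the obstruction group is $\obs^{\cC}(F(G))=\mathrm{coker}(r^F_G)$, and whenever a solution exists its solutions form a coset of $\ker(r^F_G)$, so uniqueness is governed by $\ker(r^F_G)$. By Definition \ref{defn:Reduced}, $\widetilde H^0(\cD_G^{\cC};F)=\mathrm{coker}(r^F_G)$ and $\widetilde H^{-1}(\cD_G^{\cC};F)=\ker(r^F_G)$; combining these gives $\obs^{\cC}(F(G))\cong\widetilde H^0(\cD_G^{\cC};F)$ and identifies the uniqueness obstruction with $\widetilde H^{-1}(\cD_G^{\cC};F)$, as claimed.

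Everything after the first step is a pure definition chase, so the main obstacle is the identification in the first paragraph: one must check that the two ad hoc invariance conditions cut out precisely the inverse limit — imposing no extra relations and losing none — which comes down to the factorization of morphisms in the orbit category of sections together with the matching of conjugation invariance with compatibility along the isomorphisms of $\cD_G^{\cC}$ and of destriction invariance with compatibility along its proper destriction morphisms. As a sanity check, when $\cC$ is in addition closed under taking subsections, $\cC'$ consists of all sections of $G$ and $J^{\cC'}/J^{\cC}=T^{\cC}$, so $\widetilde H^0(\cD_G^{\cC};F)$ coincides with $\ext^1_{R\cD_G}(T^{\cC},F)$, recovering the description given in Section \ref{sect:ObsGluing}.
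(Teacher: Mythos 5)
Your proposal is correct and follows essentially the same route as the paper, which in fact gives no separate proof because the statement is an immediate consequence of Definition \ref{defn:Reduced}: the reduced groups $\widetilde H^{-1}$ and $\widetilde H^{0}$ are there \emph{defined} as the kernel and cokernel of $r^F_G$, whose target is already the inverse limit over $\cD_G^{\cC}$. Your first paragraph, identifying gluing data over $\cC$ with that inverse limit via the factorization of morphisms of $\cD_G^{\cC}$ into conjugations and destrictions, is the same observation underlying Proposition \ref{pro:isom2} and correctly makes explicit what the paper leaves implicit.
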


In the rest of the section we show that under certain conditions the collection $\cC$ can be replaced 
by a smaller collection of sections without changing its reduced cohomology.  To do this we use a theorem 
by Jackowski and Slominska \cite{JackSlom} on the cohomology of categories associated to an isotropy presheaf.  
We first introduce the necessary definitions to state Jackowski and Slominska's theorem.

Let $G$ be a finite group and $W$ be a $G$-poset. Let $S(G)$ denote the $G$-poset of subgroups of $G$ 
with the $G$-action given by conjugation, and the partial order is induced by the inclusion of subgroups. 
A $G$-poset map $d: W \to S(G)$ is called an \emph{isotropy presheaf} 
on $W$ if it satisfies $d(w) \leq G_w:=\{ g\in G\, |\, gw=w\}$ for every $w\in W$. Given an isotropy presheaf $d$ 
on $W$, let $W_d$ denote the category whose objects are the elements of $W$ and where the morphisms 
$w_1 \to w_2$ are given by right cosets $d(w_2)g$ for all $g \in G$ such that $gw_1\leq w_2$.
Note that if we take $W=S(G)$ and $d$ is the identity map, the category $W_d$ is the orbit category of $G$, 
so this definition is quite natural, and the category $W_d$ can be thought of a generalization of the orbit category. 

\begin{lem}\label{lem:Isomorphism} 
Let $Sect(G, \cC)$ denote the $G$-poset of all sections in a collection $\cC$, where the $G$-action is given 
by conjugation. Let $d: Sect(G, \cC) \to S(G)$ be the $G$-map defined by $d\bigl ( (U,V) \bigr )=U\in S(G)$. 
Then $d$ defines an isotropy presheaf on $Sect(G, \cC)$ and the category $Sect(G, \cC)_d$ is isomorphic 
to the category of sections $\cD_G ^{\cC}$.
\end{lem}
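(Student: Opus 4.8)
The plan is to check the two defining properties of an isotropy presheaf for the map $d$, and then to observe that the category $Sect(G, \cC)_d$ literally coincides with $\cD_G^{\cC}$, so that the identity-on-objects assignment is an isomorphism of categories rather than merely an equivalence. First I would check that $d$ is a $G$-poset map. Equivariance is immediate: for $g\in G$ and $(U,V)\in\cC$ (so that $\leftexp{g} (U,V)=(\leftexp{g}U,\leftexp{g}V)$ is again in $\cC$, since $\cC$ is closed under conjugation) we have $d(\leftexp{g} (U,V))=\leftexp{g}U=\leftexp{g}(d(U,V))$. Monotonicity is equally immediate: if $(U,V)\squ(M,L)$, i.e.\ $L\le V\le U\le M$, then $U\le M$, so $d(U,V)\squ d(M,L)$ in $S(G)$. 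It remains to verify the isotropy condition $d(U,V)\le G_{(U,V)}$. The stabilizer of $(U,V)$ under the conjugation action is $G_{(U,V)}=N_G(U)\cap N_G(V)=N_G(U,V)$; since $U\le N_G(U)$ trivially while $U\le N_G(V)$ because $V\trianglelefteq U$, we conclude $d(U,V)=U\le N_G(U,V)=G_{(U,V)}$, as required.

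Next I would compare the two categories directly. Both $Sect(G,\cC)_d$ and $\cD_G^{\cC}$ have the sections in $\cC$ as objects. By the definition of the category $W_d$ attached to an isotropy presheaf $d$, a morphism $(U,V)\to(M,L)$ in $Sect(G,\cC)_d$ is a right coset $d(M,L)g=Mg$ with $g(U,V)\le(M,L)$ in the poset $Sect(G,\cC)$, that is with $\leftexp{g}(U,V)\squ(M,L)$. This is precisely the hom-set $\hom_{\cD_G^{\cC}}\bigl((U,V),(M,L)\bigr)$ of Definition \ref{def:OrbCatSec}, and it is literally the same set of cosets $Mg$, not merely a set in bijection. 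The composition in $W_d$ sends a composable pair $\bigl(d(N,R)h,\,d(M,L)g\bigr)$ to $d(N,R)hg=Nhg$, which is exactly the composition rule $Nh\circ Mg=Nhg$ of $\cD_G^{\cC}$; moreover the well-definedness of this composition in $W_d$ is the very same coset computation already carried out in the paragraph following Definition \ref{def:OrbCatSec} (using $\leftexp{h}M\le N=d(N,R)$). Finally, the identity morphism at $(U,V)$ is the coset $U=U\cdot 1$ in both categories. Hence the functor that is the identity on objects and sends each coset to itself is an isomorphism of categories.

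I do not expect a genuine obstacle here: the statement is a matching of definitions, and every verification above is routine. The only points that require a little care are that $\cC$ being a collection (closed under conjugation) is exactly what guarantees $\leftexp{g}(U,V)$ is again an object of $Sect(G,\cC)$, so that the relation $\leftexp{g}(U,V)\squ(M,L)$ is meaningful inside this poset, and that the hom-sets of the two categories are the same sets of right cosets $Mg$, which is what upgrades the comparison from an equivalence to a strict isomorphism of categories.
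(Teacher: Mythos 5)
Your proposal is correct and follows exactly the paper's argument: the paper likewise notes that the stabilizer of $(U,V)$ is $N_G(U,V)=N_G(U)\cap N_G(V)$, which contains $U$ since $V\trianglelefteq U$, so $d$ is an isotropy presheaf, and then observes that the morphism sets and composition of $Sect(G,\cC)_d$ coincide with those of $\cD_G^{\cC}$ by definition. You simply spell out the routine verifications (equivariance, monotonicity, matching of hom-sets and composition) that the paper leaves implicit.
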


\begin{proof} For a section $(U, V) \in \mathcal C$, the isotropy subgroup of the $G$-action on $(U,V)$ is 
$N_G(U, V)=N_G(U)\cap N_G(V)$ which includes $U$ as a subgroup. Hence $d: Sect(G, \mathcal C) \to S(G)$ 
defined by $d \bigl ( (U,V) \bigr )=U$ is an isotropy presheaf. It follows that the category 
$Sect(G, \mathcal C )_d$ is isomorphic to the category $\cD_G ^{\mathcal C}$.
\end{proof}

We now recall a result on isotropy presheaves. Note that for a poset map $f: X \to Y$ the comma category 
$y \backslash f$ is defined as the subposet $y \backslash f :=\{x\in X \, |\, y\leq f(x) \}.$

\begin{pro}[Jackowski-Slominska \cite{JackSlom}]\label{pro:JackSlom}
Let $W' \subseteq W$ be a subposet of $W$, and $i: W' \to W$ denote the inclusion map.  Suppose that 
$d'$ and $d$ are isotropy presheaves defined on  $W'$ and $W$ such that $d'= d \circ i$. If for every 
$w\in W$ the topological realization of the poset $w\backslash i=\{ w' \in W' \, | \,  w\leq w'  \}$ is $R$-acyclic, 
then for every $RW_d$-module $F$, there is an isomorphism $$H^* (W_d ; F ) \cong H^* (W'_{d'}; F\circ i_*)$$
where $i_*: W'_{d'} \to W_d$ is the functor induced by $i$. 
\end{pro}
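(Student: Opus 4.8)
The plan is to deduce the statement from a more structural result about homotopy colimits of functors between small categories, rather than computing with cochain complexes directly. Recall that for a small category $\mathcal E$ and an $R\mathcal E$-module $M$, one has $H^*(\mathcal E; M)\cong \ext^*_{R\mathcal E}(\underline R, M)$, and these groups can be computed via the bar resolution or, equivalently, via the cohomology of the nerve of $\mathcal E$ with coefficients in $M$. Given a functor $i_*: W'_{d'}\to W_d$ induced by the inclusion $i: W'\hookrightarrow W$, there is a base-change (Leray) spectral sequence, due in this generality to work going back to Quillen's Theorem A with coefficients, of the form
\[
E_2^{s,t}=H^s\bigl(W_d; \mathcal H^t\bigr)\Longrightarrow H^{s+t}(W'_{d'}; F\circ i_*),
\]
where $\mathcal H^t$ is the $R W_d$-module $w\mapsto H^t(i_*/w; F\circ i_*\circ \mathrm{pr})$, the cohomology of the comma category $i_*/w$ with the pulled-back coefficient system. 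So the first step is to identify this comma category $i_*/w$ and show that its cohomology with the relevant (locally constant, in fact constant) coefficients is concentrated in degree $0$ and equal to $F(w)$.

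First I would unravel the comma category $i_*/w$ over an object $w\in W$. An object is a pair $(w', \alpha)$ with $w'\in W'$ and $\alpha: i_*(w')\to w$ a morphism in $W_d$, i.e. a right coset $d(w)g$ with $gw'\le w$; morphisms are morphisms $w'_1\to w'_2$ in $W'_{d'}$ compatible with the $\alpha$'s. The key claim will be that $i_*/w$ is equivalent (indeed has the same nerve up to the relevant invariance) to the translation-type category built from the poset $w\backslash i = \{w'\in W' : w\le w'\}$ together with the action of the stabilizer $G_w$; more precisely, the $R$-cohomology of $i_*/w$ with constant coefficients computes $H^*_{?}$ of the $G_w$-poset $w\backslash i$, and since by hypothesis the realization $|w\backslash i|$ is $R$-acyclic, this collapses. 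The point here is the same bookkeeping with cosets $d(w)g$ and the relation $d(w)\le G_w$ that makes $W_d$ itself well-defined: the fibre of $i_*/w\to (\text{something})$ over the "basepoint" object corresponding to $w$ itself (using $\alpha=d(w)\cdot 1$, valid when $w\le w$, i.e. always) is where the coefficient system $F$ gets evaluated, giving value $F(w)$. Thus $\mathcal H^0=F$ as an $RW_d$-module and $\mathcal H^t=0$ for $t>0$ by $R$-acyclicity.

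With that input the spectral sequence degenerates: $E_2^{s,t}=0$ for $t\neq 0$ and $E_2^{s,0}=H^s(W_d;F)$, so $H^s(W_d;F)\cong H^s(W'_{d'};F\circ i_*)$ for all $s$, which is the asserted isomorphism. I would therefore organize the write-up as: (1) recall the base-change spectral sequence for the functor $i_*$; (2) compute $i_*/w$ and show $H^*(i_*/w;\text{const})$ is $F(w)$ in degree $0$ and $0$ above, using the identification of the nerve of $i_*/w$ with (a model of) $|w\backslash i|\times$ (contractible data) and the $R$-acyclicity hypothesis; (3) plug in and conclude.

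The main obstacle I expect is step (2): one must be careful that the comma category $i_*/w$ really has the homotopy type of $w\backslash i$ (with the $G_w$-equivariance folded in correctly), and that the coefficient system $F\circ i_*\circ\mathrm{pr}$ restricted to this comma category is genuinely constant with value $F(w)$ — this is where the hypothesis $d'=d\circ i$ and the isotropy condition $d(w)\le G_w$ are used, since they guarantee the morphisms $d(w)g$ do not "twist" the coefficients beyond what $F$'s own functoriality already accounts for. Once the comma categories are correctly identified, the rest is a formal spectral sequence collapse. (In fact, since this proposition is quoted verbatim as a theorem of Jackowski and S\l omi\'nska \cite{JackSlom}, in the paper itself one may simply cite it; the above is the proof one would reconstruct if pressed.)
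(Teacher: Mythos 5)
The paper does not actually prove this proposition; it cites it (Equation 6.A, Theorem 3.4 and Proposition 5.4 of Jackowski--S{\l}omi\'nska), so what matters is whether your reconstruction is sound, and it is not: your comma categories point in the wrong direction. The spectral sequence you write down is the one for the \emph{right} Kan extension along $i_*$, whose $E_2$-page involves $H^t(i_*/w;-)$, where $i_*/w$ has objects $(w',\alpha\colon i_*(w')\to w)$ with $\alpha$ a coset $d(w)g$ satisfying $gw'\le w$ --- that is, elements of $W'$ lying (up to translation) \emph{below} $w$. The hypothesis of the proposition controls $w\backslash i=\{w'\in W':w\le w'\}$, the elements \emph{above} $w$, so your ``key claim'' identifying $i_*/w$ with a category built from $w\backslash i$ is false. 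A minimal test with $G=1$ (so $W_d=W$ as a poset and $F$ is a contravariant functor): take $W=\{a<b\}$ and $W'=\{b\}$. The hypothesis holds, but $i_*/a=\emptyset$, so $\mathcal{H}^0(a)=0\ne F(a)$ and the claim $\mathcal{H}^0=F$ fails (the conclusion is still true here, both sides being $F(b)$, but not by your mechanism). Worse, take $W'=\{a\}$: every $i_*/w$ is a single point, yet $H^*(W;F)=F(b)$ while $H^*(W';F\circ i_*)=F(a)$, so acyclicity of the categories $i_*/w$ does not imply the conclusion at all.

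For contravariant coefficients (which is what $RW_d$-modules are in this paper) the correct mechanism is the dual one: push a projective resolution $P_\bullet\to\underline{R}$ of the constant functor over $RW'_{d'}$ forward by the \emph{left} Kan extension $(i_*)_!$ and use adjunction, so that the relevant derived functors are $L_t(i_*)_!\,\underline{R}(w)\cong H_t(w\backslash i_*;R)$, computed on the \emph{under} categories $w\backslash i_*$ (objects $(w',d(w')g)$ with $gw\le w'$). Even then a second nontrivial step remains, which you gesture at (``with the $G_w$-equivariance folded in correctly'') but do not carry out: $w\backslash i_*$ is not the poset $w\backslash i$, since it mixes in all translates of $w$ and the twisting by $d$, and identifying its realization with that of $w\backslash i$ is exactly the content of Proposition 5.4 and Theorem 3.4 of Jackowski--S{\l}omi\'nska. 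As written, step (2) of your plan contains both the direction error and this unaddressed identification, so the argument does not go through; either set up the left Kan extension version and do the $w\backslash i_*$ versus $w\backslash i$ comparison in detail, or simply cite the source as the paper does.
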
 

\begin{proof} This isomorphism is stated in \cite[Section 6]{JackSlom}  as Equation 6.A. The proof follows from 
Theorem 3.4 and Proposition 5.4 in \cite{JackSlom}.
\end{proof}  


As a consequence of Lemma \ref{lem:Isomorphism} and Proposition \ref{pro:JackSlom} we obtain the following.
 
\begin{pro}\label{pro:Reduction}
Let $G$ be a finite group, and let $\cC '$ and $\cC$  be two families of sections of $G$ such that $\cC' \subseteq \cC$. 
Let $i: Sect(G, \cC') \to Sect(G, \cC)$ denote the inclusion of posets map, and $i_*: \cD_G ^{\cC'} \to \cD_G ^{\cC} $ 
be the induced map. Suppose that for every $w\in \cC$ the poset $w\backslash i=\{ w' \in \cC' \, | \,  w\leq w' \}$ 
has a contractible realization. Then, for every $R\cD^{\cC}_G $-module $F$, there is an isomorphism 
$$\widetilde H^* (\cD_G ^{\cC} ; F ) \cong \widetilde H^* (\cD_G ^{\cC'} ; F\circ i_* ).$$ 
In particular, for a biset functor $F$ for $G$, we have $\obs^{\cC} (F(G)) \cong \obs ^{\cC'} (F(G)).$ 
\end{pro}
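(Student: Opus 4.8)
The plan is to reduce this statement to a direct application of the Jackowski--Slominska theorem via the dictionary established in Lemma~\ref{lem:Isomorphism}. First I would note that the hypothesis ``$\cC' \subseteq \cC$'' together with Lemma~\ref{lem:Isomorphism} puts us exactly in the setting of Proposition~\ref{pro:JackSlom}: take $W = Sect(G,\cC)$ with its isotropy presheaf $d\bigl((U,V)\bigr) = U$, take $W' = Sect(G,\cC')$ with $d' = d\circ i$ (which is automatic since $i$ is just the inclusion of sub-$G$-posets and $d'$ is the restriction of $d$), and observe that $W_d \cong \cD_G^{\cC}$ and $W'_{d'} \cong \cD_G^{\cC'}$ as categories. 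The acyclicity hypothesis of Proposition~\ref{pro:JackSlom} asks that for each $w = (U,V)\in\cC$ the realization of $w\backslash i = \{w'\in\cC' \mid (U,V)\squ w'\}$ be $R$-acyclic; our hypothesis that this poset is contractible is strictly stronger, so it applies. This gives the isomorphism $H^*(\cD_G^{\cC};F) \cong H^*(\cD_G^{\cC'};F\circ i_*)$ for $*\geq 1$.

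Next I would handle the low-dimensional part, i.e.\ the reduced cohomology in degrees $-1$ and $0$. By Definition~\ref{defn:Reduced}, these are the kernel and cokernel of the detection map $r_G^F\colon F(G) \to \varprojlim_{(U,V)\in\cD_G^{\cC}} F(U/V)$, and this inverse limit is $\hom_{R\cD_G^{\cC'}}(J^{\cC'},F) \cong H^0(\cD_G^{\cC};F)$ (an inverse limit over the category, identified with $F(G)$ detected over $\cC$ via the cochain-complex description of reduced cohomology given after Definition~\ref{defn:Reduced}). The cleanest way to run the whole argument uniformly is to use that cochain-complex description: by the resolution~(\ref{eqn:resolutionJC'}), $\widetilde H^n(\cD_G^{\cC};F) \cong \ext^{n+1}_{\cD_G^{\cC'}}(J^{\cC'}/J^{\cC},F)$ for all $n\geq -1$, and likewise $\widetilde H^n(\cD_G^{\cC'};F\circ i_*) \cong \ext^{n+1}_{\cD_G^{\cC''}}(J^{\cC''}/J^{\cC'},F\circ i_*)$ where $\cC'' = (\cC')'$. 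One then needs to check that the Jackowski--Slominska isomorphism is compatible with the short exact sequences $0\to J^{\cC}\to J^{\cC'}\to J^{\cC'}/J^{\cC}\to 0$ relating these; equivalently, that $i_*$ carries the constant functor to the constant functor and that the detection maps commute with the comparison map induced by $i_*$. Since $i$ is the inclusion of a full sub-$G$-poset and $d$ restricts correctly, this naturality is essentially formal, so the degree $-1$ and $0$ statements follow by the same machinery as the higher degrees.

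For the final sentence, I would invoke Theorem~\ref{thm:Main1} (or rather Proposition~\ref{pro:ReducedHom}): for a biset functor $F$ for $G$ we have $\obs^{\cC}(F(G)) \cong \widetilde H^0(\cD_G^{\cC};F)$ and $\obs^{\cC'}(F(G)) \cong \widetilde H^0(\cD_G^{\cC'};F\circ i_*)$, and these are identified by the $*=0$ case of the displayed isomorphism, noting that $F\circ i_*$ is precisely the restriction of the biset functor $F$ to the smaller collection.

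I expect the main obstacle to be not the existence of the isomorphism in positive degrees---that is a black-box application of Proposition~\ref{pro:JackSlom}---but rather verifying that the comparison functor $i_*$ interacts correctly with the reduced structure in degrees $-1$ and $0$, i.e.\ that $r_G^F$ over $\cC$ and $r_G^F$ over $\cC'$ fit into a commuting square under the induced map on inverse limits. This requires knowing that the comma-category condition forces $(G,1)$ to lie in $\cC'$ whenever $\cC$ is nonempty (so that $F(G)$ is the source of both detection maps) and that the map $\varprojlim_{\cC} F \to \varprojlim_{\cC'} F$ induced by restricting the inverse system along $i_*$ is the relevant comparison map. Once that bookkeeping is in place, the result follows by combining the cochain-complex descriptions with the naturality of the Jackowski--Slominska isomorphism.
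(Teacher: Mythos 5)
Your proposal is correct and takes essentially the same route as the paper: the isomorphism in positive degrees is Proposition~\ref{pro:JackSlom} applied through the identification of Lemma~\ref{lem:Isomorphism}, and the extension to degrees $-1$ and $0$ via naturality of the detection maps is exactly the 5-lemma argument on the long exact sequence of the pair that the paper invokes. (One minor slip in your closing discussion: $(G,1)$ need not lie in $\cC'$ itself, only in its upward closure, which is automatic once the contractibility hypothesis forces $\cC'$ to be nonempty; nothing in your argument depends on this.)
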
 

\begin{proof}  Let $W=Sect(G, \cC)$ and $W'=Sect(G, \cC')$, and  let $d$ denote the isotropy presheaf defined 
on $W$ by $d (U,V)=U$ and let $d'=d\circ i$.  By Lemma \ref{lem:Isomorphism}  and Proposition \ref{pro:JackSlom}, 
we have an isomorphism $H^* (\cD_G ^{\cC} ; F ) \cong H^* (\cD_G ^{\cC'} ; F\circ i_d)$. The result for the reduced 
cohomology follows from this by applying the 5-lemma on the long exact sequence for the cohomology of a pair of categories.
\end{proof}

Now we give two applications of Proposition \ref{pro:Reduction}. Let $G$ be a finite group and  $p$ be a fixed prime. 
Let $\cD_G ^{p}$ denote the category of sections over the collection of sections $(P, Q)$ in $G$ such that $P$ is 
a $p$-group and $Q\neq 1$, and let $\cD _G ^e$ denote the full subcategory of $\cD^p_G$ whose objects are 
sections of the form $(U, E)$ where $U$ is a $p$-subgroup of $G$ and $E$ is a nontrivial elementary abelian 
$p$-subgroup of $G$ such that $U \leq C_G(E)$.

\begin{pro}\label{pro:CentralRef} 
For every $R\cD^p_G$-module  $F$, there is an isomorphism  
$$\widetilde H^* (\cD_G ^p ; F) \cong \widetilde H^* (\cD^e_G ; F \circ i_*).$$  
\end{pro}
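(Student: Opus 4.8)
The plan is to apply Proposition \ref{pro:Reduction} directly with $\cC$ the collection of sections $(P,Q)$ where $P$ is a $p$-subgroup of $G$ and $Q \neq 1$, and $\cC'$ the subcollection of sections $(U,E)$ with $U \leq C_G(E)$ and $E$ a nontrivial elementary abelian $p$-subgroup. The only thing to verify is the contractibility hypothesis of Proposition \ref{pro:Reduction}: for every section $w = (P,Q) \in \cC$, the poset $w \backslash i = \{ (U,E) \in \cC' \mid (P,Q) \squ (U,E) \}$ has contractible topological realization. Recall that $(P,Q) \squ (U,E)$ means $E \leq Q \leq P \leq U$. So the poset in question consists of pairs $(U,E)$ with $1 \neq E$ elementary abelian, $E \leq Q$, $P \leq U \leq C_G(E)$.

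First I would reorganize this poset by the $E$-coordinate: for a fixed nontrivial elementary abelian $E \leq Q$, the set of valid $U$'s is $\{U \mid P \leq U \leq C_G(E)\}$, which is nonempty precisely when $P \leq C_G(E)$, i.e. when $E$ is central in $P$ (equivalently $E \leq Z(P) \cap Q$, and since $E \leq Q$ is automatically implied once $E \leq Z(P)$ and we also require $E \leq Q$; in fact the relevant condition is $1 \neq E \leq \Omega_1(Z(P)) \cap Q$, an elementary abelian subgroup). When nonempty, that fiber $\{U \mid P \leq U \leq C_G(E)\}$ is an interval in the subgroup lattice, hence a poset with a minimum element $P$, so it is contractible. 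Thus $w\backslash i$ fibers over the poset $\cE$ of nontrivial elementary abelian subgroups $E$ of $\Omega_1(Z(P)) \cap Q$ with contractible fibers, and the fibers are compatible with the order (if $E_1 \leq E_2$ then $C_G(E_2) \leq C_G(E_1)$, so the fiber over $E_2$ sits inside the fiber over $E_1$). I would make this precise via Quillen's fiber theorem (Theorem A), applied to the projection $(U,E) \mapsto E$, to reduce contractibility of $w \backslash i$ to contractibility of $\cE$.

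The poset $\cE$ of all nontrivial elementary abelian subgroups of the (nontrivial, since $Q \neq 1$ and $Q$ is a $p$-group so $\Omega_1(Z(P)) \cap Q \neq 1$ — here one uses that $Q$ is normal in $P$, so $Q \cap Z(P) \neq 1$) finite abelian $p$-group $A := \Omega_1(Z(P)) \cap Q$ is conically contractible: it has a maximal element, namely $A$ itself (which is elementary abelian since $A \leq \Omega_1(Z(P))$), and the maps $E \mapsto E \cdot A = A$ give a conical contraction (or simply: $\cE$ has a terminal object $A$, hence its nerve is contractible). This establishes the hypothesis of Proposition \ref{pro:Reduction} for every $w \in \cC$, and the proposition then yields the claimed isomorphism $\widetilde H^*(\cD_G^p ; F) \cong \widetilde H^*(\cD_G^e ; F \circ i_*)$.

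The step I expect to require the most care is the fiber-theorem reduction in the second paragraph: one must check that the projection $w\backslash i \to \cE$ is a well-defined poset map, that the comma-categories (homotopy fibers) are exactly the intervals $[P, C_G(E)]$ in the subgroup lattice as claimed, and that these are nonempty exactly over the subposet I identified — in particular one should double-check the edge case analysis when $P$ itself is not a $p$-group but $Q$ is (the definition of $\cC$ only requires $P$ to be a $p$-group, so this case does not arise here, but one should confirm the hypotheses of $\cD_G^p$ are being used consistently). Everything else is formal once contractibility of $w\backslash i$ is in hand.
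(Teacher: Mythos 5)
Your proposal is correct, and the overall strategy (invoke Proposition \ref{pro:Reduction} and verify contractibility of $w\backslash i$ for $w=(P,Q)$) is exactly the paper's. The difference is in how you prove that contractibility. You fiber the poset over the $E$-coordinate, check that the relevant comma posets are intervals $[P,C_G(E')]$ with minimum $P$, and then contract the base poset $\cE$ of nontrivial elementary abelian subgroups of $A:=\Omega_1(Z(P))\cap Q$ to its maximal element $A$; this all goes through (the comma poset over $E$ is $\{(U,E')\mid E'\le E,\ P\le U\le C_G(E')\}$, which has minimum $(P,E)$, so Quillen's Theorem A applies). But the fiber-theorem machinery is unnecessary: combining your two observations, the single element $(P,A)$ already lies in $w\backslash i$ (since $A\le \Omega_1(Z(P))$ gives $P\le C_G(A)$, and $A\ne 1$ because $Q\nor P$ meets $Z(P)$ nontrivially), and every $(U,E)\in w\backslash i$ satisfies $E\le A\le Q\le P\le U$, i.e.\ $(P,A)\squ (U,E)$. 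So $(P,A)$ is a minimum of the whole poset $w\backslash i$, which is therefore conically contractible --- this is the paper's one-step argument (citing \cite[1.5]{Quillen}). Your route buys nothing extra here, but it is sound; the only place you needed care, as you anticipated, was in checking the comma categories rather than the literal fibers, and that check succeeds.
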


\begin{proof} Let $W^p$ and $W^e$ denote the posets corresponding to the collections  $\cC^p$ and $\cC^e$ for 
the categories $\cD_G^p$ and $\cD_G^e$, and let $i: W^e \to W^p$ denote the inclusion map. The result will follow 
from Proposition \ref{pro:Reduction} once we show that for every $w\in W^p$, the poset $w\backslash i$ is contractible. 
Let $w=(P,Q)$, then  $w\backslash i$ is the poset of sections of the form $(U, E)$ such that 
$1 \neq E \leq Q \leq P \leq U \leq C_G(E)$. Since $P \leq C_G(E)$, the subgroup $E$ is central in $P$ and it lies in $Q$. 
Let $Z$ denote the subgroup $\Omega _1 (Z(P))\cap Q$, where $\Omega _1 (Z(P))$  denotes the subgroup of $P$ 
formed by elements of $Z(P)$ dividing $p$.  Note that $Z\neq 1$  because $Q$ is normal in $P$. In particular, 
$w \backslash i$ is not empty. For every $(U,E)$ in $w\backslash i$, we have  $$E\leq Z\leq Q \leq P \leq U$$ 
hence  $(P,Z)$ lies in $w \backslash i$, and  $(U, E) \succeq (P,Z)$ for every $(U, E)$ in $w\backslash i$. 
By \cite[1.5]{Quillen}, this means that the poset $w\backslash i$ is canonically contractible to the element $(P,Z)$.  
\end{proof}

Let $G$ be a finite group and $F$ be a biset functor for $G$ over $R$. Let $\cD_G^*$ denote the 
orbit category of sections over all sections $(U,V)$ of $G$ with $V\neq 1$. A series of subgroups 
$1 \leq L_0 \nor L_1 \nor \cdots \nor L_n \leq G$ is called a subnormal series in $G$ if $L_{i-1} \nor L_{i}$ 
for every $i \in \{ 1, \dots, n\}$. The normalizer of a subnormal series $1 \leq L_0 \nor L_1 \nor \cdots \nor L_n \leq G$ 
is the intersection of the normalizers $N_G(L_i)$ over all $i$. For finite groups we have the following 
reduction result.

\begin{pro}\label{pro:Subnormal} 
Let $G$ be a finite group, and $\cD _G ^s$ denote the full subcategory of $\cD_G^*$ whose objects are sections 
of the form $(N, L)$ where $N$ is the normalizer of a subnormal series 
$1 < L_0 \nor L_1 \nor \cdots \nor L_n=L$ in $G$. Then for any $R\cD_G$-module $F$,  there is an isomorphism 
$$\widetilde H^* (\cD _G ^* ; F) \cong \widetilde H^* (\cD _G ^{s} ; F \circ i_* ).$$
\end{pro}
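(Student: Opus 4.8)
The plan is to deduce this reduction from Proposition \ref{pro:Reduction} by verifying its comma-category hypothesis. Write $\cC^*$ for the collection of all sections $(U,V)$ of $G$ with $V\neq 1$, and $\cC^s\subseteq\cC^*$ for the collection of all sections $(N,L)$ with $N$ the normalizer of a subnormal series $1<L_0\nor L_1\nor\cdots\nor L_n=L$ in $G$. Both are closed under $G$-conjugation (conjugating a subnormal series by $g$ conjugates its normalizer by $g$), so they are collections of sections in the sense of Section \ref{sec:OrbSub}, $\cD_G^*=\cD_G^{\cC^*}$ and $\cD_G^s=\cD_G^{\cC^s}$. Let $i:Sect(G,\cC^s)\to Sect(G,\cC^*)$ be the inclusion of $G$-posets and $i_*:\cD_G^s\to\cD_G^*$ the induced functor. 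By Proposition \ref{pro:Reduction} it suffices to show that for every $w=(U,V)\in\cC^*$ the poset $w\backslash i=\{(N,L)\in\cC^s\mid L\le V\le U\le N\}$ has contractible realization, and I would do this by a conical-contraction argument in the style of the proof of Proposition \ref{pro:CentralRef}.

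Fix $w=(U,V)$ with $V\neq 1$. First I would note that $w\backslash i$ is nonempty: since $V\nor U$ we have $U\le N_G(V)$, and the length-zero series $1<V$ exhibits $z:=(N_G(V),V)$ as an object of $\cC^s$ with $V\le U\le N_G(V)$, so $z\in w\backslash i$. The main observation is that the denominator $V$ can always be spliced onto a subnormal series for $L$: if $(N,L)\in w\backslash i$ is realized by $1<L_0\nor\cdots\nor L_n=L$, then $L\nor N$ together with $V\le N$ forces $V\le N_G(L)$, so $1<L_0\nor\cdots\nor L_n=L\nor V$ is again a subnormal series, now ending in $V$, whose normalizer is $\bigl(\bigcap_iN_G(L_i)\bigr)\cap N_G(V)=N\cap N_G(V)$. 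Hence $f(N,L):=(N\cap N_G(V),V)$ is a well-defined element of $\cC^s$, independent of the chosen series, and it lies in $w\backslash i$ because $U\le N$ and $U\le N_G(V)$ give $U\le N\cap N_G(V)$.

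It then remains to check that $f$ is a conical contraction of $w\backslash i$ onto $z$. Monotonicity is immediate from $N\le N'\Rightarrow N\cap N_G(V)\le N'\cap N_G(V)$. Using the convention $(U_1,V_1)\squ(U_2,V_2)\iff V_2\le V_1\le U_1\le U_2$, one checks $f(N,L)\squ(N,L)$ (from $L\le V$, $V\le N\cap N_G(V)$, $N\cap N_G(V)\le N$), so $f\le\mathrm{id}_{w\backslash i}$, and $f(N,L)\squ z$ (from $V\le V$, $V\le N\cap N_G(V)$, $N\cap N_G(V)\le N_G(V)$), so $f$ is also bounded above by the constant map with value $z$. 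Since comparable poset maps induce homotopic maps on nerves, $\mathrm{id}_{|w\backslash i|}$ is null-homotopic and $|w\backslash i|$ is contractible; alternatively, $f$ is idempotent with $f\le\mathrm{id}$, so $w\backslash i$ deformation retracts onto $f(w\backslash i)=\{(M,V)\in\cC^s\mid U\le M\}$, which has greatest element $z$, and one concludes by \cite[1.5]{Quillen}. Proposition \ref{pro:Reduction} applied with $\cC=\cC^*$ and $\cC'=\cC^s$ then gives $\widetilde H^*(\cD_G^*;F)\cong\widetilde H^*(\cD_G^s;F\circ i_*)$ for every $R\cD_G$-module $F$.

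The only step that needs an idea is the splicing observation — that $L\nor V$ holds automatically because $L\nor N\supseteq V$, which is exactly what makes the enlarged series end in $V$ with normalizer precisely $N\cap N_G(V)$, and hence what keeps $f(N,L)$ inside $\cC^s$. I expect that to be the crux; the order-relation verifications, the independence of $f$ from the chosen series, and the passage from the conical contraction to contractibility are routine and parallel the proof of Proposition \ref{pro:CentralRef}.
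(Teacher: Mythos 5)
Your proposal is correct and follows essentially the same route as the paper: both reduce to Proposition \ref{pro:Reduction} and establish contractibility of $w\backslash i$ by the same splicing observation ($V\le N\le N_G(L)$ gives $L\nor V$, so appending $V$ to the subnormal series puts $(N\cap N_G(V),V)$ in $\cC^s$), the paper phrasing the contraction as the two-step zig-zag $(N,L)\succeq(N\cap N_G(V),V)\squ(N_G(V),V)$ while you package the same zig-zag as the poset map $f$ with $f\squ\mathrm{id}$ and $f\squ c_z$. No gaps.
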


\begin{proof}
Let $W^*$ denote the poset of all sections $(U, V)$ of $G$ with $V \neq 1$, and let $W^s$ denote the poset
of sections of the form $(N, L)$ where $N$ is the normalizer of a subnormal series 
$1< L_0 \nor L_1\nor \cdots \nor L_n = L$. Let $i: W^s \to W^*$ denote the inclusion map. By  Proposition 
\ref{pro:Reduction}, we only need to show that for every $w=(U,V) \in W^*$, the subposet $w\backslash i$ 
is contractible. If $(N, L) $ belongs to $ w\backslash i$ then $N$ normalizes a subnormal series 
$1 < L_0 \nor \cdots \nor L_n=L$ and $L \leq V \leq U \leq N \leq N_G(L)$. This gives in particular that 
$L \nor V$. Let $N'=N\cap N_G(V)$. Then $(N', V)$ lies in $W^s$, because $N'$ is the normalizer of the 
subnormal series $1< L_0 \nor \cdots \nor L_{n} \nor L_{n+1} =V$. We have a zig-zag of inclusions 
$(N, L) \succeq (N', V) \squ ( N_G(V), V)$ in $w\backslash i$, so the poset $w\backslash i$ is contractible, 
in two steps, to the section $(N_G(V), V)$ in $w \backslash i$.  
\end{proof}

In the next section we give a refinement of Proposition \ref{pro:CentralRef} for $p$-groups, and use it to calculate 
the obstruction groups for some well-known biset functors defined on $p$-groups. We believe that Proposition 
\ref{pro:Subnormal} is also a very useful reduction for gluing problems involving finite groups, but we will not 
pursue this direction here. 


\section{Higher limits over the Quillen category}\label{sect:Quillen}

Let $G$ be a finite group, and let $\cD_G ^{*}$ denote the category of sections over the collection $\cC^*$ of 
sections $(U, V)$ in $G$ such that $V \neq 1$. If $G$ is a $p$-group, then the collection $\cC ^p$ is equal to 
$\cC^*$, hence in this case Proposition \ref{pro:CentralRef} gives an isomorphism 
$$\widetilde H^* (\cD_G ^* ; F) \cong \widetilde H^* (\cD_G ^e ; F\circ i_*) $$ for every $R\cD_G$-module $F$.  
It turns out that we can refine this isomorphism further to a smaller collection when $G$ is a $p$-group.   
Let $\cD_G^c$ denote the full subcategory of $\cD_G$ over the collection 
$$ \cC^c=\{ (C_G(E) , E) \, | \,  E\neq 1 \text{  is an elementary abelian $p$-subgroup in $G$} \}.$$ 
Note that the $\cD_G^c$ is the full subcategory of $\cD_G^e$ formed by the maximal elements in $\cC^e$. 
We have the following.

\begin{pro}\label{pro:CentralRef2} 
Let $G$ be a finite $p$-group. Then for any $R\cD^*_G$-module $F$, there is an isomorphism 
$$\widetilde H^* (\cD_G ^* ; F) \cong \widetilde H^* (\cD^c_G ; F \circ j_*)$$
where  $j_*: \cD_G^c \to \cD_G^*$ is the map induced by the inclusion.
\end{pro}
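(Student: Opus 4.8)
The plan is to apply Proposition~\ref{pro:Reduction} once more, this time to the inclusion of the subposet $W^c \subseteq W^e$ corresponding to the collections $\cC^c \subseteq \cC^e$, composed with the already-established isomorphism $\widetilde H^* (\cD_G^* ; F) \cong \widetilde H^* (\cD_G^e ; F\circ i_*)$ from Proposition~\ref{pro:CentralRef} (valid since $\cC^p = \cC^*$ when $G$ is a $p$-group). So the only thing to check is that for every object $w = (U,E) \in \cC^e$ — i.e. $U$ a $p$-subgroup of $G$ with $U \le C_G(E)$ and $E \ne 1$ elementary abelian — the comma poset $w\backslash i = \{ (C_G(E'), E') \in \cC^c \mid (U,E) \squ (C_G(E'), E') \}$ has contractible realization, where $i: W^c \to W^e$ is the inclusion. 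Chaining the two isomorphisms then gives the claim, with $j_* = i_* \circ (\text{inclusion } \cD_G^c \hookrightarrow \cD_G^e)$ being exactly the functor induced by $\cD_G^c \hookrightarrow \cD_G^*$.

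Unwinding the order relation on sections, $(U,E) \squ (C_G(E'), E')$ means $E' \le E \le U \le C_G(E')$; since we are inside a $p$-group, $E'$ is automatically elementary abelian as a subgroup of $E$, and the condition $U \le C_G(E')$ is equivalent to $E' \le Z(U)$ together with $E'$ being centralized by $U$ — but $E' \le E \le U$ forces $E'$ to be a subgroup of $U$, so the condition reads: $E'$ is a nontrivial subgroup of $E$ with $E' \le Z(U)$ (equivalently $[U, E'] = 1$). Thus $w\backslash i$ is identified, as a poset, with the poset of nontrivial subgroups $E'$ of $E \cap Z(U)$, ordered by inclusion (note $E \cap Z(U) \ne 1$ because $E$ is a nontrivial normal — indeed central, since $U \le C_G(E)$ — subgroup of the $p$-group $U$, so in fact $E \le Z(U)$ and $E \cap Z(U) = E$). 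Once this identification is made, I would invoke Quillen's result \cite[1.5]{Quillen}: the poset of nontrivial subgroups of a nontrivial $p$-group $A$ is conically contractible via $E' \mapsto E' \cdot \Omega_1(Z(A)) \mapsto \Omega_1(Z(A))$ (the two maps $E' \squ E'\Omega_1(Z(A)) \succeq \Omega_1(Z(A))$ being monotone with the middle term comparable to both ends), hence has contractible realization. Applying this with $A = E$ finishes the verification.

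The main obstacle — really the only nontrivial point — is the correct bookkeeping of the order relation and the functor induced on the smaller categories: one must be careful that the morphisms in $\cD_G^c$ agree with the restriction of those in $\cD_G^e$ (which holds because $\cD_G^c$ is a \emph{full} subcategory of $\cD_G^e$, as noted in the text right before the statement), and that the composite $\cD_G^c \hookrightarrow \cD_G^e \xrightarrow{i_*} \cD_G^*$ is indeed the inclusion-induced $j_*$, so that the coefficient system transforms as claimed. Everything else is a direct citation of Propositions~\ref{pro:CentralRef} and~\ref{pro:Reduction} together with Quillen's contractibility lemma; no spectral sequences or delicate homological algebra are needed here. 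I would also remark that the reduction genuinely uses that $G$ is a $p$-group in two places: to know $\cC^p = \cC^*$, and to know that the relevant subgroups $E'$ of $E$ are automatically elementary abelian $p$-subgroups so that $(C_G(E'), E')$ really lies in $\cC^c$.
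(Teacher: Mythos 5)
Your proposal is correct and follows essentially the same route as the paper: reduce via Proposition~\ref{pro:CentralRef}, then apply Proposition~\ref{pro:Reduction} to the inclusion $W^c\subseteq W^e$ after checking that for $w=(U,E)$ the comma poset $w\backslash i$ is contractible. The paper makes this last point by observing directly that $(C_G(E),E)$ is a minimum of $w\backslash i$ (i.e.\ $(C_G(V),V)\squ (C_G(E'),E')$ for every element), which is the same conical contraction you obtain by identifying $w\backslash i$ with the (opposite of the) poset of nontrivial subgroups of $E$ and noting it has a maximum.
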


\begin{proof}  It is enough to show that for an $R\cD_G^e$-module $F$, there is an isomorphism 
$$\widetilde H^* (\cD_G ^e ; F) \cong \widetilde H^* (\cD^c_G ; F \circ k_*)$$ where $k_*: \cD_G^c \to \cD_G^e$ 
is the map induced by inclusion. Let $W^c$ and $W^e$ denote the posets corresponding 
to the collections for the categories $\cC^c$ and $\cC^e$, and let $k: W^c \to W^e$ denote the inclusion map.  
Let $w=(U,V)$, then  $w\backslash k$ is the poset of sections of the form $( C_G(E), E)$ such that 
$E \leq V  \leq  U  \leq C_G(E)$. Note that $V$ is also an elementary abelian $p$-group and $U\leq C_G(V)$. 
This gives $$E\leq V \leq U \leq C_G(V) \leq C_G(E)$$ and hence $(C_G(V), V)   \squ (C_G(E), E)$ for every 
$(C_G(E), E)$ in $w\backslash k$. This means that the poset $w\backslash j$ is canonically contractible 
to the element $(C_G(V), V)$. Hence the result follows by Proposition \ref{pro:Reduction}. \end{proof}
 
\begin{rem} 
Note that when $G$ is not a $p$-group the above argument fails because in that case the pair 
$(C_G(V), V)$ may not lie in the collection $\cC^p$ since $C_G(V)$ is not necessarily a $p$-group. 
\end{rem} 
 
Let $G$ be a finite group, and $\cC$ be a collection of elementary abelian $p$-subgroups in $G$. 
The \emph{Quillen category} $\cA^{\cC}_p (G)$ is 
defined as the category of elementary abelian subgroups in $\cC$ with morphisms $E_1 \to E_2$ 
given by compositions of conjugations and inclusions. If $\cC$ is the collection of all nontrivial elementary abelian 
$p$-subgroups in $G$, then we denote the Quillen category by $\cA_p (G)$, or simply by $\cA_p$ 
if $G$ is clear from the context. If we also add the trivial subgroup to the collection of subgroups, 
we denote the Quillen category by $\cA'_p$.

For an $R\cD_G$-module $F$, we can define a covariant $R\cA'_p$-module $\overline F$  by taking
$\overline F (E)=F(C_G(E)/E)$ for every $E\in \cA'_p$. For an $\cA'_p$-module $M$, we define the 
reduced cohomology $\widetilde H^i (\cA_p ; M )$
as the cohomology of the cochain complex obtained by applying $\hom _{R\cA_p'} (- , M) $ to the resolution 
$$ \cdots \to P_2 \to P_1 \to P_0 \to {\underline R} \to 0,$$ where $P_*$ is the projective resolution for the 
constant functor $J$ for $\cA_p$ and $\underline R$ is the constant functor for the category $\cA_p'$.  
Note that $\underline R$ is a projective $\cA_p'$-module because it is isomorphic to the projective module 
$P_1$ defined by $P_1 (-) = R \hom _{\cA_p'}  (1, -)$. 

Now we are ready to prove Theorem \ref{thm:Main2} stated in the introduction.

\begin{pro}\label{thm:SubQuillen} 
Let $G$ be a finite $p$-group. Then, the opposite category $\overline{\cD}_G ^c$ is isomorphic to the category 
$\cA_p$ via the functor which takes $E\in \cA_p$ to $(C_G(E), E ) \in \cD_G ^c$.  As a consequence, for any 
$R\cD_G$-module $F$, there is an isomorphism 
$$\widetilde H^* (\cD^*_G ; F) \cong \widetilde H^* (\cA_p ; \overline F ).$$ 
In particular, $\obs (F(G))\cong \widetilde H^0 (\cA_p; \overline F )$.
\end{pro}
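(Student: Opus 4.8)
The plan is to establish the claimed isomorphism of categories first, and then derive the cohomology statement as a formal consequence of Proposition \ref{pro:CentralRef2} together with the identification of the relevant module.

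\textbf{Step 1: The isomorphism $\overline{\cD}_G^c \cong \cA_p$.} I would define the functor $\Phi: \cA_p \to \overline{\cD}_G^c$ on objects by $\Phi(E) = (C_G(E), E)$; this is a bijection on objects since distinct elementary abelian subgroups $E$ give distinct sections $(C_G(E),E)$, and every object of $\cD_G^c$ has this form by definition of $\cC^c$. On morphisms, recall that a morphism $E_1 \to E_2$ in $\cA_p$ is a coset of the form (conjugation followed by inclusion), which can be encoded by a coset $C_G(E_2) g$ with $g \in G$ satisfying $\leftexp{g}E_1 \le E_2$. On the other side, a morphism $(C_G(E_1), E_1) \to (C_G(E_2), E_2)$ in $\cD_G^c$ is a coset $C_G(E_2)\cdot g$ with $\leftexp{g}(C_G(E_1), E_1) \squ (C_G(E_2), E_2)$, i.e. $E_2 \le \leftexp{g}E_1 \le \leftexp{g}C_G(E_1) = C_G(\leftexp{g}E_1) \le C_G(E_2)$, equivalently $E_2 \le \leftexp{g}E_1$. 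In the opposite category $\overline{\cD}_G^c$ this becomes a morphism $E_2 \to E_1$ specified by the same coset condition $E_2 \le \leftexp{g}E_1$; relabelling, a morphism $E_1 \to E_2$ in $\overline{\cD}_G^c$ corresponds to a coset $C_G(E_2)g$ with $\leftexp{g}E_1 \ge E_2$, i.e. $\leftexp{g^{-1}}E_2 \le E_1$. The main bookkeeping task is to match these coset descriptions correctly --- the key point is that since $C_G(E_2) \le C_G(E)$ whenever $E \le E_2$, the ambiguity groups for the cosets on the two sides agree, so the correspondence $C_G(E_2)g \leftrightarrow C_G(E_2)g$ is a well-defined bijection on Hom-sets, and it visibly respects composition (which is $Nh \circ Mg = Nhg$ on both sides). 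I expect this verification that the coset ambiguities coincide --- using that $U = C_G(E)$ is forced in each object, so the $N_G(U,V)/U$-action matches up --- to be the one genuinely fiddly point, though it is elementary.

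\textbf{Step 2: Transport of the module.} Under the isomorphism $\overline{\cD}_G^c \cong \cA_p$, an $R\cD_G^c$-module (i.e. a contravariant functor on $\cD_G^c$, equivalently a covariant functor on $\overline{\cD}_G^c$) corresponds to a covariant functor on $\cA_p$. Applied to $F \circ j_*$ restricted to $\cD_G^c$, whose value at $(C_G(E), E)$ is $F(C_G(E)/E)$, this is precisely the $R\cA_p$-module $\overline F$ defined in the paragraph preceding the proposition by $\overline F(E) = F(C_G(E)/E)$. I would note that the functoriality of $\overline F$ in $\cA_p$ is exactly the functoriality of $F$ along the destriction and conjugation maps attached to the corresponding morphisms in $\cD_G^c$, so nothing needs to be checked beyond the translation of Step 1.

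\textbf{Step 3: Conclusion.} Combining, $\widetilde H^*(\cD_G^c; F \circ j_*) \cong \widetilde H^*(\cA_p; \overline F)$ by Step 1 and Step 2, since the reduced cohomology of a category depends only on the category and the module up to isomorphism (the reduced cohomology of $\cA_p$ with coefficients in $\overline F$ being defined via the constant-functor resolution as in the paragraph before the statement, matching the definition for $\cD_G^c$). Then Proposition \ref{pro:CentralRef2} gives $\widetilde H^*(\cD_G^*; F) \cong \widetilde H^*(\cD_G^c; F \circ j_*)$ for a $p$-group $G$, and stringing these together yields $\widetilde H^*(\cD_G^*; F) \cong \widetilde H^*(\cA_p; \overline F)$. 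Finally, taking $* = 0$ and invoking Proposition \ref{pro:ReducedHom} (or the exact sequence in Definition \ref{defn:Reduced}), which identifies $\obs(F(G)) = \obs^{\cC^*}(F(G))$ with $\widetilde H^0(\cD_G^*; F)$, gives $\obs(F(G)) \cong \widetilde H^0(\cA_p; \overline F)$, completing the proof. The only real obstacle is Step 1; everything after it is formal transport and citation.
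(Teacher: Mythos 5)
Your overall strategy is exactly the paper's (establish the category isomorphism, transport the module, then quote Proposition \ref{pro:CentralRef2} and Proposition \ref{pro:ReducedHom}), and Steps 2 and 3 are fine, but Step 1 --- the step you yourself flag as the only real obstacle --- is carried out incorrectly. A morphism $E_1\to E_2$ in $\cA_p$ is a group homomorphism $x\mapsto gxg^{-1}$ with $\leftexp{g}E_1\le E_2$, and two elements $g,g'$ induce the same homomorphism if and only if $g^{-1}g'$ centralizes the \emph{source}; so $\hom_{\cA_p}(E_1,E_2)$ is parametrized by left cosets $gC_G(E_1)$, not by cosets of $C_G(E_2)$ as you claim. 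Your parametrization over-counts: for $G=D_8$, $E_1=Z(G)$ and $E_2$ a Klein four subgroup, there is exactly one morphism $E_1\to E_2$ in $\cA_p$ (the inclusion, since $E_1$ is central), whereas $\{C_G(E_2)g \,:\, \leftexp{g}E_1\le E_2\}=E_2\backslash G$ has two elements.

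The second, compounding problem is that your ``identity correspondence $C_G(E_2)g\leftrightarrow C_G(E_2)g$'' does not typecheck: the two sets you write down are cut out by \emph{opposite} inclusion conditions, $\leftexp{g}E_1\le E_2$ on the $\cA_p$ side versus $E_2\le\leftexp{g}E_1$ on the other, so when $E_1$ is properly subconjugate to $E_2$ one set is nonempty and the other is empty. (In passing to $\overline{\cD}_G^c$ you restated the morphisms of $\cD_G^c$ without actually reversing their direction.) What is really true is that
$\hom_{\overline{\cD}_G^c}(\Phi E_1,\Phi E_2)=\hom_{\cD_G^c}\bigl((C_G(E_2),E_2),(C_G(E_1),E_1)\bigr)=\{C_G(E_1)h \,:\, E_1\le \leftexp{h}E_2\}$ --- again cosets of the centralizer of the source $E_1$ --- and the correct bijection is $gC_G(E_1)\mapsto C_G(E_1)g^{-1}$, as in the paper. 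The inversion $g\mapsto g^{-1}$ is not optional bookkeeping: it converts the condition $\leftexp{g}E_1\le E_2$ into $E_1\le\leftexp{g^{-1}}E_2$ and is precisely why the \emph{opposite} category of $\cD_G^c$, rather than $\cD_G^c$ itself, is isomorphic to $\cA_p$. With that bijection, compatibility with the composition rule $Nh\circ Mg=Nhg$ is immediate, and the rest of your argument then goes through.
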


\begin{proof}  The morphism set $\hom _{\cA_p} (E_1, E_2)$ can be identified with
the set of cosets $gC_G (E_1) $ such that $\leftexp{g} E_1 \leq E_2.$ We can identify this set with the set of 
morphisms $$\hom _{\cD_G} ( (C_G(E_2), E_2) , (C_G(E_1), E_1))=\{ C_G (E_1) x \, | \, \leftexp{x} (C_G(E_2),E_2) 
\squ (C_G(E_1), E_1) \}$$ via the bijection which takes $g C_G (E_1)$ to $C_G(E_1) g^{-1}$. The second statement 
follows from Proposition 
\ref{pro:CentralRef2}. 
\end{proof}

The higher limits over the Quillen category of a finite group $G$ can be calculated using a cochain complex defined by Oliver  
\cite[Theorem 1]{Oliver}. This cochain complex is defined using Steinberg modules and it vanishes at dimensions 
above the $p$-rank of $G$. Recall that $p$-rank of a finite group $G$, denoted by $\rk _p(G)$, is defined as 
the largest integer $s$ such that $(\Zz/p)^s \leq G$. The rank of a finite group $G$, denoted by $\rk(G)$, is defined as the 
maximum value of $p$-ranks of $G$ over all primes $p$ dividing the order of $G$. For a $p$-group $G$, the $p$-rank and the 
rank are equal.

For an elementary abelian $p$-subgroup $E$ of $G$, let ${\rm Aut}_G(E)$ denote the quotient group $N_G(E)/C_G(E)$.
The Steinberg module $\St_E$ is the $\aut _G(E)$-module defined as the reduced homology of the poset of proper subgroups of $E$.  
As a direct consequence of Theorem \ref{thm:SubQuillen}, Oliver's theorem gives the following proposition. 
A commutative ring $R$ with unity is called \emph{$p$-local} if for every integer $q$ with $(p,q)=1$, 
the map $m_q: R \to R$ defined by multiplication by $q$ is an isomorphism. 

\begin{pro}\label{pro:Oliver} Let $G$ be a finite $p$-group, $R$ be a $p$-local commutative ring, and $F$ be an 
$R \cD_G$-module. For each $k \geq 0$, let $\cE_k$ denote a set of representatives of the conjugacy classes of elementary 
abelian $p$-subgroups in $G$ with  $\rk (E)=k$ (for $k=0$, $\cE_0$ consists of the trivial group).
Then  for any $i\geq -1$, the reduced cohomology group $\widetilde H^i (\cD_G ; F)$  is isomorphic 
to the $i$-th cohomology of the cochain complex $(C^* _{\St} (F), \delta)$, where
$$ C^i _{\St} (F) \cong \prod _{E \in \cE _{i+1} }  \hom _{\aut_G(E)} (\St _E, F(C_G (E)/E)).$$
In particular, $\widetilde H^i (\cD_G ; F)= 0$ for $i\geq \rk (G)$.
\end{pro}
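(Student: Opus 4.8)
The plan is to obtain this as a direct consequence of Proposition~\ref{thm:SubQuillen} together with Oliver's theorem on higher limits via Steinberg modules \cite[Theorem~1]{Oliver}; the only real work will be to reconcile the two bookkeeping conventions.

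First I would invoke Proposition~\ref{thm:SubQuillen}: since $G$ is a $p$-group it gives a natural isomorphism $\widetilde H^*(\cD_G^*;F)\cong\widetilde H^*(\cA_p;\overline F)$, where $\overline F$ is the covariant $R\cA_p'$-module with $\overline F(E)=F(C_G(E)/E)$ for $E\neq 1$ and $\overline F(1)=F(G)$. So it suffices to identify $\widetilde H^i(\cA_p;\overline F)$ with the $i$-th cohomology of $(C^*_{\St}(F),\delta)$. Recalling the definition from Section~\ref{sect:Quillen}, $\widetilde H^*(\cA_p;\overline F)$ is the cohomology of $\hom_{R\cA_p'}(-,\overline F)$ applied to a resolution $\cdots\to P_1\to P_0\to\underline R\to 0$, where $P_*\to J$ is a projective resolution of the constant functor $J$ of $\cA_p$ and $\underline R$ is the projective constant functor of $\cA_p'$. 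Writing $T$ for the $R\cA_p'$-module supported on the trivial subgroup (so $T=\underline R/J$, and the complex above is a projective resolution of $T$ over $\cA_p'$), one has $\widetilde H^{-1}(\cA_p;\overline F)=\hom_{R\cA_p'}(T,\overline F)$ and $\widetilde H^{i}(\cA_p;\overline F)=\ext^{i+1}_{R\cA_p'}(T,\overline F)$ for $i\ge 0$; feeding the short exact sequence $0\to J\to\underline R\to T\to 0$ (with $\underline R$ projective) into the long exact sequence identifies these with the kernel and cokernel of $F(G)=\overline F(1)\to\varprojlim_{\cA_p}\overline F$ in degrees $-1,0$ and with $H^i(\cA_p;\overline F)=\varprojlim^i_{\cA_p}\overline F$ for $i\ge 1$. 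In other words $\widetilde H^*(\cA_p;\overline F)$ is precisely the reduced (augmented by $F(G)$) higher-limit functor over the Quillen category $\cA_p(G)$.

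Next I would apply Oliver's theorem. For any finite group $G$, any prime $p$, and any $p$-local coefficient system $M$ on $\cA_p(G)$, Oliver \cite[Theorem~1]{Oliver} produces a cochain complex, natural in $M$, whose term in degree $i\ge -1$ is $\prod_{E\in\cE_{i+1}}\hom_{\aut_G(E)}(\St_E,M(E))$ — with the convention that $\cE_0=\{1\}$, $\St_1=R$ with trivial action, so the degree $-1$ term is $M(1)$ — whose cohomology computes this reduced higher-limit functor. Taking $M=\overline F$, which is $p$-local because $R$ is, and substituting $\overline F(E)=F(C_G(E)/E)$, this identifies $\widetilde H^i(\cA_p;\overline F)$, hence $\widetilde H^i(\cD_G^*;F)$, with the $i$-th cohomology of $(C^*_{\St}(F),\delta)$, where $C^i_{\St}(F)=\prod_{E\in\cE_{i+1}}\hom_{\aut_G(E)}(\St_E,F(C_G(E)/E))$. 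The step requiring care — and the only genuine obstacle — is the matching of conventions: Oliver states his result over $\Zz_{(p)}$, so one must check that his construction remains valid over an arbitrary $p$-local ground ring $R$ (the point is that the Steinberg modules stay projective over $R[\aut_G(E)]$ when $R$ is $p$-local, so his Steinberg complex base-changes to a projective resolution of the constant functor over $R\cA_p(G)$), and one must verify that including the trivial subgroup as the $(-1)$-st term of the complex yields exactly the low-degree reduced groups, i.e. $H^{-1}(C^*_{\St}(F))=\ker(r_G^F)$ and $H^0(C^*_{\St}(F))=\operatorname{coker}(r_G^F)$. This is bookkeeping rather than new mathematics, but it is where the argument must be written out carefully.

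Finally, the vanishing is immediate: since $G$ is a $p$-group, $\rk(G)=\rk_p(G)$, so for $i\ge\rk(G)$ we have $i+1>\rk_p(G)$ and there is no elementary abelian $p$-subgroup of rank $i+1$; thus $\cE_{i+1}=\emptyset$, $C^i_{\St}(F)=0$, and therefore $\widetilde H^i(\cD_G^*;F)\cong H^i(C^*_{\St}(F))=0$.
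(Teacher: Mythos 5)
Your proposal is correct and follows exactly the route the paper intends: the paper states this proposition as a direct consequence of Proposition \ref{thm:SubQuillen} combined with Oliver's theorem \cite[Theorem 1]{Oliver}, which is precisely your argument, with your careful checks on the $(-1)$-st term and the $p$-local base ring filling in the bookkeeping the paper leaves implicit. The vanishing for $i\geq\rk(G)$ via $\cE_{i+1}=\emptyset$ is likewise the intended argument.
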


The $\aut_G (E)$-action on $F(C_G(E)/E)$ is given by the map induced by conjugation
homomorphism $c_n : C_G(E)/E \to C_G(E)/E $ defined by $c_n ( x)= nxn^{-1}$ for every $x\in C_G(E)$ and $n\in N_G(E)$.
The boundary maps $\delta$ can be described as follows: For each $E \in \cA_p$, there is a map  
$$R_E : \St_E \to \bigoplus _{ \substack{A \leq E \\ [E: A]=p} } \St_A$$ defined by truncating chains of subgroups or using the boundary 
maps for cohomology of posets (see \cite[page 1390]{Oliver}). Then the map 
$$\delta ^{i-1}: C_{\St} ^{i-1} (F) \to C^i _{\St} (F)$$ is defined as the map that takes $c\in  C_{\St } ^{i-1} (F)$ 
to the element whose coordinate for $E \in \cE_{i+1} $ is equal to the composite 
$$ \xymatrix{ \St_E \ar[r]^-{R_E}  & \bigoplus \limits_{\substack{A \leq E \\ [E:A]=p}} \St_A \ar[r]^-{\oplus c(A) }  & \bigoplus 
\limits _{ \substack{A \leq E \\  [E:A]=p}} F (C_G(A)/A) \ar[r]^-{\oplus \defres}  & F(C_G(E)/E)}.$$

The following example shows how we can calculate the cohomology of $\cD_G$ using the cochain complex 
in the theorem.

\begin{ex}\label{ex:D8Chp5}  Let $G=D_8$ be the dihedral group of order $8$ as in Example \ref{ex:D8}. In this case  
$$\hom _{\aut_G (E)} (\St _E, F(C_G (E)/E))\cong F(C_G(E)/E)$$ for every elementary abelian 
subgroup $E \leq G$. So the cochain complex given in Proposition \ref{pro:Oliver} is of the form  
$$0 \to F(G) \to F(G/Z) \oplus F(E_1/Q_1) \oplus F(E_2/Q_2)  \to  F(E_1/E_1) \oplus F(E_2/E_2) \to 0.$$
This is the same as the cochain complex in Example \ref{ex:D8} obtained from the projective resolution.
The category $\cA'_p$ is also an EI-category so we can write a projective resolution of $\cA_p'$-modules 
that would give this cochain complex when we apply $\hom _{R \cA'_p} (P_*, \overline F)$ to this resolution. 
For each $E$, let $P_E$ be the projective module where $P_E (-)=R\hom _{\cA'_p} (E, -)$.  Let $T$ denote the 
$R\cA'_p$-module which has value $R$ at the trivial subgroup and zero everywhere else. Then there is a projective 
resolution for $T$ of the form
$$0 \to P_{E_1} \oplus P_{E_2} \to P_{Q_1} \oplus P_{Q_2} \oplus P_{Z} \to P _{1} \to T \to 0.$$ \qed
\end{ex}  

When $R$ is not $p$-local there is a spectral sequence from which one can still calculate the cohomology groups 
$\widetilde H^* (\cD_G; F)$ (see \cite[Proposition 6]{Oliver}). Higher limits over $\cA_p (G)$ can also be calculated 
using a theorem of Grodal \cite[Thm 6.1]{Grodal-High}.  As before we assume that $R$ is a $p$-local commutative 
ring, and $\cC$ is a collection of elementary abelian $p$-subgroups closed under taking overgroups.  Associated to 
an $R\cA_p ^{\cC}(G)$-module $M$, there is a $G$-local coefficient system $\cF$ on $|\cC|$ defined by  
$\cF (E_0\leq \cdots \leq E_n) =M(E_n)$. Grodal's theorem states that for every 
$i \geq 0$, there is an isomorphism $$H^i (\cA_p ^{\cC} (G) ; M) \cong H^i _G (|\cC| ; \cF ),$$
where the second cohomology group is the equivariant cohomology of the simplicial complex $|\cC|$ with the
$G$-local coefficient system $\cF$. This isomorphism can be extended to reduced cohomology groups 
by an easy diagram chasing argument. We obtain the following.

\begin{pro}\label{pro:Grodal} Let $G$ be a finite $p$-group, and $R$ be a $p$-local commutative ring. Let $\cC$ 
denote the collection of sections $(U,V)$ of $G$ with  $V\neq 1$. For every $R \cD_G$-module $F$, 
there is an isomorphism $$\widetilde H^* (\cD^{\cC}_G ; F) \cong \widetilde H^* _G ( |\cC|; \cF).$$
\end{pro}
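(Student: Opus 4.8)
The plan is to combine the identification of $\widetilde H^*(\cD^{\cC}_G; F)$ with a Quillen-category cohomology group (already available from the earlier results) with Grodal's theorem \cite[Thm 6.1]{Grodal-High}, and then to promote the resulting isomorphism from ordinary to reduced cohomology by a diagram chase. Concretely, since $G$ is a $p$-group, the collection $\cC$ of sections $(U,V)$ with $V\neq 1$ coincides with the collection $\cC^p$, so Proposition \ref{pro:CentralRef2} and Proposition \ref{thm:SubQuillen} give an isomorphism $\widetilde H^*(\cD^{\cC}_G; F)\cong \widetilde H^*(\cA_p; \overline F)$, where $\overline F(E)=F(C_G(E)/E)$ is the associated $R\cA'_p$-module. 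So the first step is simply to invoke these results to reduce everything to a statement about the Quillen category $\cA_p(G)$ and the module $\overline F$.

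Next I would recall the set-up for Grodal's theorem: taking $\cC$ (in Grodal's sense) to be the collection of \emph{all} nontrivial elementary abelian $p$-subgroups of $G$ — which is closed under taking overgroups — one associates to the $R\cA_p(G)$-module $\overline F$ the $G$-local coefficient system $\cF$ on the simplicial complex $|\cC|$ (the Quillen complex) defined by $\cF(E_0\le\cdots\le E_n)=\overline F(E_n)=F(C_G(E_n)/E_n)$. Note that this $|\cC|$ is, up to a $G$-homeomorphism, the geometric realization of the poset of sections underlying $\cD^{\cC}_G$ used on the right-hand side of the proposition (a section $(U,V)$ with $V\neq 1$ has $V$ elementary abelian only after the reductions, but the nerve of the reduced category $\cD^c_G$ realizes the Quillen complex — this is exactly the content of Proposition \ref{thm:SubQuillen} and the homotopy equivalences behind Proposition \ref{pro:CentralRef2}). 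Grodal's theorem then yields $H^i(\cA_p(G);\overline F)\cong H^i_G(|\cC|;\cF)$ for every $i\ge 0$.

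Finally I would upgrade this to reduced cohomology. Both sides fit into four-term exact sequences in low degrees: on the algebraic side this is the defining exact sequence of $\widetilde H^{-1}$ and $\widetilde H^0$ in Definition \ref{defn:Reduced} (with middle term $F(G)\to \varprojlim_{(U,V)\in\cD^{\cC}_G} F(U/V)$), and on the equivariant topology side there is the analogous augmented sequence computing reduced equivariant cohomology $\widetilde H^{-1}_G$ and $\widetilde H^0_G$ of $|\cC|$, with augmentation map $F(G)\to H^0_G(|\cC|;\cF)$. Grodal's isomorphism in degrees $\ge 1$ together with compatible identifications of the augmentation maps in degree $0$ then gives, by the 5-lemma applied to the map of long exact sequences (exactly as in the proof of Proposition \ref{pro:Reduction}), an isomorphism $\widetilde H^*(\cD^{\cC}_G;F)\cong\widetilde H^*_G(|\cC|;\cF)$ in all degrees $i\ge -1$. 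The main obstacle I expect is the bookkeeping in this last step: one must check that the augmentation/limit maps on the two sides really are identified under Grodal's isomorphism — i.e. that the comparison in degree $\ge 1$ is natural enough to force compatibility of the degree-$0$ augmentations — and that the underlying $G$-spaces (the nerve of $\cD^c_G$ versus the Quillen complex $|\cC|$) are $G$-homotopy equivalent in a way compatible with the coefficient systems. Once these compatibilities are in place, the rest is the routine diagram chase.
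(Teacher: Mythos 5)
Your proposal is correct and takes essentially the same route as the paper: the paper derives this proposition directly from Grodal's theorem applied to the $\cA_p(G)$-module $\overline F$, after the identification $\widetilde H^*(\cD^{\cC}_G;F)\cong\widetilde H^*(\cA_p;\overline F)$ of Proposition \ref{thm:SubQuillen}, and extends the isomorphism to reduced cohomology by exactly the diagram chase you outline.
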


 As in Proposition \ref{pro:Oliver} we obtain that the reduced cohomology groups 
$\widetilde H^i (\cD_G ; F)$ are zero for all $i \geq \rk(G)$. We also obtain the following.

\begin{pro}\label{pro:Constant} 
Let $G$ be a finite $p$-group and $R$ be a $p$-local commutative ring.  Let $\underline R$ denote the constant 
$R \cD_G$-module. Then $\widetilde H^n (\cD^*_G ;  \underline R)=0$ for all $n$.
\end{pro}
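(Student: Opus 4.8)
The plan is to reduce the statement to a known acyclicity result for the Quillen complex. By Proposition \ref{thm:SubQuillen}, for the constant $R\cD_G$-module $\underline R$ there is an isomorphism
$$\widetilde H^* (\cD^*_G ; \underline R) \cong \widetilde H^* (\cA_p ; \overline{\underline R}),$$
and since $\overline{\underline R}(E) = \underline R(C_G(E)/E) = R$ for every $E \in \cA_p'$ with all structure maps the identity, the module $\overline{\underline R}$ is just the constant functor $\underline R$ on $\cA_p'$. So it suffices to prove that $\widetilde H^n(\cA_p ; \underline R) = 0$ for all $n$, i.e.\ that the reduced cohomology of the Quillen category with constant $R$-coefficients vanishes.

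First I would unwind what $\widetilde H^*(\cA_p ; \underline R)$ computes. By the definition given just before Proposition \ref{pro:Oliver}, it is the cohomology of the cochain complex obtained by applying $\hom_{R\cA_p'}(-, \underline R)$ to a projective resolution of the constant functor $J$ of $\cA_p$, with $\underline R$ (the constant functor of $\cA_p'$) spliced on. Standard facts identify $H^*(\cA_p ; \underline R)$ with the cohomology $H^*(|\cA_p| ; R)$ of the nerve (equivalently, the realization of the poset of nontrivial elementary abelian $p$-subgroups of $G$), and the reduced version then becomes the reduced simplicial cohomology $\widetilde H^*(|\cA_p| ; R)$ — the extra term $F(G/1) = R$ at the bottom accounts exactly for the augmentation. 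Alternatively, and perhaps more cleanly within the paper's framework, I would invoke Proposition \ref{pro:Grodal}: with $F = \underline R$ the associated $G$-local coefficient system $\cF$ on $|\cC^*|$ is the constant system $R$, so $\widetilde H^*(\cD^*_G ; \underline R) \cong \widetilde H^*_G(|\cC^*| ; R)$. Either route reduces the claim to the vanishing of the (equivariant, or ordinary) reduced cohomology of the Quillen complex with constant coefficients.

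The key input is then Quillen's theorem: for a finite $p$-group $G$, the poset $\cA_p(G)$ of nontrivial elementary abelian $p$-subgroups is $\Zz$-acyclic, indeed contractible, because it is conically contractible via $E \mapsto E \cdot \Omega_1(Z(G))$ (or directly, $|\cA_p(G)|$ is contractible since $G$ is a $p$-group and $\Omega_1(Z(G)) \neq 1$). Hence $\widetilde H^*(|\cA_p(G)| ; R) = 0$ for any coefficient ring $R$. For the equivariant version one notes that contractibility is realized $G$-equivariantly enough (the cone point $\Omega_1(Z(G))$ is $G$-fixed), so the equivariant reduced cohomology with constant coefficients also vanishes; this is the standard fact that the Quillen complex of a $p$-group is $G$-acyclic. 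I would cite \cite{Quillen} for the contractibility (the conical contraction used already appears in the proof of Proposition \ref{pro:CentralRef}) and, if the equivariant refinement is needed, note it follows since the contracting homotopy can be taken $G$-equivariant.

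The main obstacle is bookkeeping rather than mathematics: making sure the reduced cohomology as \emph{defined} in the paper (kernel/cokernel of $r^F_G$ in degrees $-1,0$, ordinary category cohomology above) really matches reduced simplicial cohomology of $|\cA_p(G)|$, including getting the augmentation/degree shift right so that $\widetilde H^{-1}$ corresponds to $\widetilde H^{-1}(\text{complex})$ and the $n=0$ term is genuinely reduced. Once that identification is in place — which is exactly the content of the discussion preceding Proposition \ref{pro:Oliver} together with Proposition \ref{thm:SubQuillen} — the vanishing is immediate from contractibility of $|\cA_p(G)|$. I would therefore keep the proof short: invoke Proposition \ref{thm:SubQuillen} to pass to $\cA_p$, observe $\overline{\underline R}$ is the constant functor, identify the reduced cohomology with $\widetilde H^*(|\cA_p(G)|;R)$ (or use Proposition \ref{pro:Grodal}), and conclude by Quillen's contractibility of the Quillen complex of a $p$-group.
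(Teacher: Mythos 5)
Your proposal is correct and takes essentially the same route as the paper: the paper's proof applies Proposition \ref{pro:Grodal} to obtain $\widetilde H^*(\cD^*_G;\underline R)\cong \widetilde H^*_G(|\cE|;\underline R)\cong \widetilde H^*(|\cE|/G;R)$, where $\cE$ is the poset of nontrivial elementary abelian subgroups, and then uses the $G$-equivariant conical contraction of this poset (cone point $\Omega_1(Z(G))$) to conclude that the orbit space is contractible. The one imprecision is your parenthetical equating the constant-coefficient cohomology of the category $\cA_p$ with the cohomology of the poset realization $|\cE|$ itself --- the space that actually computes these higher limits is the orbit space $|\cE|/G$ --- but this is harmless here, since you also give the Grodal route and the equivariant contraction makes both $|\cE|$ and $|\cE|/G$ contractible.
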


\begin{proof} 
Let $\cE$ denote the collection of all nontrivial elementary abelian $p$-subgroups in $G$.  
By Proposition \ref{pro:Grodal}, we have 
$$\widetilde H^* (\cD^* _G; \underline R) \cong \widetilde H^* _G (|\cE |; \underline R) \cong \widetilde H^* (|\cE | /G ; R).$$ 
The topological realization of  $\cE$ is equivariantly contractible when $G$ is a $p$-group, hence the orbit space 
$|\cE|/G$ is contractible. Thus $\widetilde H^* (\cD_G ^* ; \underline R )=0$. 
\end{proof}

   
\section{Gluing rhetorical $p$-biset functors}\label{sect:Rhetorical}
 
Let $R$ be a commutative ring with unity. A biset functor $F$ over $R$ is called a $p$-biset functor if it is defined 
over the collection of all $p$-groups. The main aim of this section is the calculation of obstruction groups 
for a certain type of $p$-biset functors, called rhetorical $p$-biset functors. We give the definition for rhetorical 
biset functors below in Definition \ref{def:rhetorical}. Some well-known $p$-biset functors, such as the rational 
representation ring functor $R_{\Qq}$ (and its dual $R_{\Qq}^*$) and the functor $D_t$ for the torsion part of 
Dade group, defined over $p$-groups when $p$ is an odd prime, are rhetorical $p$-biset functors. 

Let $F$ be a biset functor for a finite group $G$ over $R$. For a normal subgroup 
$N \nor G$, let $e_N^G$ denote the $(G, G)$-biset $\infl ^G _{G/N} \defl ^G _{G/N}$. The biset 
$e_N^G$ is an idempotent of $RB(G,G)$ since the composition $\defl ^G _{G/N} \infl ^G _{G/N}$ 
is the identity $(G/N, G/N)$-biset. This gives a direct sum decomposition 
$$F(G) \cong e^G_N F(G) \oplus (1-e^G_N)F(G).$$ 

From \cite[Sections 6.2 and 6.3]{Bouc-Book}, for any $M, N \nor G$, 
we have $e^G_N e^G_M=e^G_{NM}$. For each normal subgroup $N \nor G$, we define $f^G_N$ as 
the idempotent $$f^G_N=\sum _{N \leq M \nor G} \mu _{\nor G} (N, M) e^G_M$$ where $\mu _{\nor G}$ 
denotes the M\" obius function for the poset of normal subgroups of $G$. The elements $f_N^G$ are 
orthogonal idempotents of $RB(G,G)$, and $\sum _{N \nor G} f_N ^G =\mathrm{id} _G.$ 
This gives a decomposition for $F(G)$ in the form
$$ F(G)\cong \bigoplus \limits _{N \nor G} f_N ^G F(G).$$ 

\begin{defn} The summand $f_1 ^G F(G)$ corresponding to $N=1$ in this decomposition is called the 
\emph{submodule of faithful elements} of $F(G)$, denoted $\partial F (G)$. Note that 
$$\partial F(G)=\bigcap _{1 < N \nor G} \ker \defl ^G _{G/N}.$$
\end{defn}

From this definition, we immediately see that $\ker r^{F} _G \subseteq \partial F(G)$ where  
$r^F_G$ is the detection map in the gluing problem for $F$. We conclude the following.

\begin{lem} 
Let $F$ be a biset functor for a finite group $G$. If $\partial F (G)=0$, then 
$\hom _{R\cD_G} (T, F)=\ker r^F_G=0$.
\end{lem}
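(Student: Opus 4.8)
The plan is to deduce the statement immediately from two ingredients already available: the inclusion $\ker r^F_G \subseteq \partial F(G)$ noted just above, and the isomorphism $\ker(r^F_G) \cong \hom_{R\cD_G}(T,F)$ furnished by Proposition \ref{pro:main1} (equivalently, Theorem \ref{thm:Main1}). Granting these, the argument is one line: if $\partial F(G)=0$ then $\ker r^F_G = 0$, and transporting this equality along the isomorphism yields $\hom_{R\cD_G}(T,F)=0$.

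The only point worth spelling out carefully is the inclusion $\ker r^F_G \subseteq \partial F(G)$. Recall that $r^F_G$ sends $f\in F(G)$ to the tuple $(\defres^G_{N_G(H)/H}f)_{1<H\le G}$, and that $\defres^G_{N_G(H)/H}=\defl^{N_G(H)}_{N_G(H)/H}\circ\res^G_{N_G(H)}$. For a nontrivial normal subgroup $N\nor G$ one has $N_G(N)=G$, so the component of $r^F_G(f)$ indexed by $H=N$ is $\defres^G_{G/N}f=\defl^G_{G/N}f$, the restriction part being the identity biset at $G$. Hence $f\in\ker r^F_G$ forces $\defl^G_{G/N}f=0$ for every $1<N\nor G$, i.e.\ $f\in\bigcap_{1<N\nor G}\ker\defl^G_{G/N}=\partial F(G)$. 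This establishes $\ker r^F_G\subseteq\partial F(G)$.

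Combining: under the hypothesis $\partial F(G)=0$ we get $\ker r^F_G=0$, and Proposition \ref{pro:main1} gives $\hom_{R\cD_G}(T,F)\cong\ker r^F_G=0$, as claimed. I do not anticipate any genuine obstacle; the entire content is the identification of the $H=N$ coordinate of the detection map with the deflation $\defl^G_{G/N}$ for normal $N$, which is precisely the remark preceding the statement, together with the already-proved homological interpretation of $\ker r^F_G$.
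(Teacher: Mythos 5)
Your proposal is correct and is exactly the paper's argument: the paper's entire justification is the remark that $\ker r^F_G\subseteq\partial F(G)$ (which you verify by identifying the $H=N$ coordinate of $r^F_G$ with $\defl^G_{G/N}$ for normal $N$) combined with the isomorphism $\hom_{R\cD_G}(T,F)\cong\ker r^F_G$ from Proposition \ref{pro:main1}. Your write-up in fact spells out the inclusion more explicitly than the paper, which simply asserts it as immediate.
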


There are many examples of biset functors where $\partial F(G)$ is not zero. If $F=B^*$ is the biset functor 
of the dual of the Burnside ring, then by \cite[Theorem 3.1]{Coskun-GlueBS}  for any $p$-group $G$, we have 
$\ker r^{B^*}_G= \hom _{\Zz \cD_G} (T, B^*) \cong \mathbb Z$, so $\partial B^* (G)$ is not zero. 
If $F=R_{\mathbb Q}^*$ is the biset functor of the dual of the rational representation ring, then 
$\hom_{\Zz\cD_G} (T, R_{\mathbb Q} ^* )=0$ if $G$ is a $p$-group of rank at least 2, and it is equal 
to $\Zz$ otherwise (see \cite[Sec. 4]{Coskun-GlueBS} for details).

A special case is where $G=E$ is an elementary abelian $p$-group. In this case we have 
$$\hom _{R\cD_E } (T, F)=\ker r^F_E=\partial F(E),$$ hence the uniqueness of the solution of the gluing 
problem is equivalent to the existence of faithful elements in $F(E)$. Moreover, in this case there exists 
a solution for the gluing problem for any biset functor.

\begin{pro}[Bouc-Thevenaz \cite{BoucThev-Glue}]\label{pro:EltAb}  
If $F$ is a biset functor defined on an elementary abelian $p$-group $E$, then $\obs (F(E))=0$, i.e., there is 
a solution for the gluing problem for $F$. The solution is unique if and only if $\partial F (E)=0$.  
\end{pro}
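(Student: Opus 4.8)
The plan is to exploit the idempotent decomposition $F(E)=\bigoplus_{N\leq E}f^E_N F(E)$ together with the way deflation interacts with it. Since $E$ is abelian we have $N_E(H)=E$ for every $H\leq E$, so the conjugation conditions in a gluing datum are vacuous and the datum is simply a family $(f_H)_{1<H\leq E}$ with $f_H\in F(E/H)$ satisfying $\defl^{E/K}_{E/H}f_K=f_H$ whenever $1<K\leq H\leq E$, while the target of $r^F_E$ is $\varprojlim_{1<H\leq E}F(E/H)$. The uniqueness claim is immediate (and essentially already recorded above): $\ker r^F_E=\bigcap_{1<H\leq E}\ker\defl^E_{E/H}=\partial F(E)$ by definition of $\partial F$, so once a solution exists the solutions form a coset of $\partial F(E)$ and the solution is unique precisely when $\partial F(E)=0$. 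Thus everything reduces to showing $r^F_E$ is surjective, i.e. $\obs(F(E))=0$.

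The key input is that deflation is ``homogeneous'' for the idempotents $f$: for $K\leq H\leq E$, the map $\defl^{E/K}_{E/H}$ annihilates $f^{E/K}_{N/K}F(E/K)$ when $H\not\leq N$, and restricts to an isomorphism $f^{E/K}_{N/K}F(E/K)\xrightarrow{\ \sim\ }f^{E/H}_{N/H}F(E/H)$ when $H\leq N$, with inverse induced by inflation. This follows from the identities $\defl^A_{A/B}\infl^A_{A/B}=\mathrm{id}$ and $\defl^A_{A/C}\infl^A_{A/B}=\defl^{A/B}_{A/C}$ (for $B\leq C$) together with the M\"obius combinatorics defining $f^A_N$; it is exactly the sort of fact developed in \cite[Ch.~6]{Bouc-Book}. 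Concretely, $e^{E/K}_{H/K}=\infl^{E/K}_{E/H}\defl^{E/K}_{E/H}$ is the projection onto $\mathrm{im}\,\infl^{E/K}_{E/H}=\bigoplus_{N\geq H}f^{E/K}_{N/K}F(E/K)$, on which $\defl^{E/K}_{E/H}$ is inverse to $\infl^{E/K}_{E/H}$, and $\infl^{E/K}_{E/H}f^{E/H}_{N/H}=f^{E/K}_{N/K}\infl^{E/K}_{E/H}$ for $H\leq N$.

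Granting this, one builds a solution as follows. Given a gluing datum, decompose $f_H=\sum_{H\leq N\leq E}(f_H)_N$ with $(f_H)_N\in f^{E/H}_{N/H}F(E/H)$. For each nontrivial $N\leq E$ fix a subgroup $Z_N$ of order $p$ inside $N$ and let $f_N\in f^E_N F(E)$ be the unique element with $\defl^E_{E/Z_N}(f_N)=(f_{Z_N})_N$ (possible since $\defl^E_{E/Z_N}$ maps $f^E_N F(E)$ isomorphically onto $f^{E/Z_N}_{N/Z_N}F(E/Z_N)$), and set $f=\sum_{1\neq N\leq E}f_N$. To check $\defl^E_{E/H}(f)=f_H$ for all $1<H\leq E$: the term $\defl^E_{E/H}(f_N)$ vanishes unless $H\leq N$, so it is enough to show $\defl^E_{E/H}(f_N)=(f_H)_N$ when $H\leq N$, and since $HZ_N\leq N$ and $\defl^{E/H}_{E/HZ_N}$ is injective on $f^{E/H}_{N/H}F(E/H)$, it suffices to verify this after applying $\defl^{E/H}_{E/HZ_N}$. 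By transitivity of deflation, $\defl^{E/H}_{E/HZ_N}\defl^E_{E/H}(f_N)=\defl^{E/Z_N}_{E/HZ_N}\defl^E_{E/Z_N}(f_N)=\defl^{E/Z_N}_{E/HZ_N}((f_{Z_N})_N)$, which by homogeneity is the $N$-component of $\defl^{E/Z_N}_{E/HZ_N}f_{Z_N}=f_{HZ_N}$; likewise $\defl^{E/H}_{E/HZ_N}((f_H)_N)$ is the $N$-component of $\defl^{E/H}_{E/HZ_N}f_H=f_{HZ_N}$. Both equal $(f_{HZ_N})_N$, so $\defl^E_{E/H}(f)=\sum_{H\leq N\leq E}(f_H)_N=f_H$, and hence $\obs(F(E))=0$.

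The only real difficulty is the bookkeeping behind the homogeneity statement for deflation and keeping track of which summands survive which deflations; after that, existence is the short computation above and uniqueness is a tautology. Alternatively one could route the argument through Proposition~\ref{thm:SubQuillen}, identifying $\obs(F(E))\cong\widetilde H^0(\cA_p(E);\overline F)$ and $\ker r^F_E\cong\widetilde H^{-1}(\cA_p(E);\overline F)$ with $\cA_p(E)$ the poset of nontrivial subgroups of $E$ and $\overline F(E')=F(E/E')$; the same homogeneity of deflation shows that $r^F_E$ is a split surjection of $F(E)=\bigoplus_{N\leq E}f^E_N F(E)$ onto $\varprojlim_{\cA_p(E)}\overline F\cong\bigoplus_{1\neq N\leq E}\partial F(E/N)$ with kernel $\partial F(E)$, giving both assertions simultaneously.
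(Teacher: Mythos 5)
Your proof is correct, and it follows essentially the same route as the paper, which simply defers to the Bouc--Th\'evenaz argument for $D_t$: the uniqueness claim is the tautology $\ker r^F_E=\partial F(E)$, and surjectivity comes from the compatibility of deflation with the idempotents $f^G_N$ (deflation $\defl^{E/K}_{E/H}$ kills $f^{E/K}_{N/K}F(E/K)$ unless $H\le N$, in which case it is split by inflation). Your write-up just makes explicit, via the M\"obius idempotents of \cite[Ch.~6]{Bouc-Book}, the computation that the paper leaves to the citation.
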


\begin{proof}  The proof is the same as proof of \cite[Lemma 2.2]{BoucThev-Glue} given for the functor $D_t$.  
\end{proof}

An important class of $p$-biset functors are rhetorical $p$-biset functors. They are defined as follows.

\begin{defn}\label{def:rhetorical}  
Let $R_{\Qq} (H, K)$ denote the representation ring of $\Qq H$-$\Qq K$-bimodules, and 
$$\lin _{\Qq} : R \otimes _{\Zz} B(H,K) \to R \otimes _{\Zz} R_{\Qq} (H, K)$$ denote the linearization map 
which takes an $(H,K)$-biset $X$ to the rational permutation bimodule $\Qq X$.  A $\Qq G$-Morita functor 
\cite{HTW} or a \emph{rhetorical biset functor} \cite{Barker-Rhetorical} is a biset functor $F$ such that 
$F(u)=0$ for every $u \in \ker \lin _{\Qq}$.
\end{defn}

For rhetorical biset functors we have the following theorem.

\begin{pro}[Hambleton-Taylor-Williams \cite{HTW}] \label{pro:Morita} 
Let $G$ be a $p$-group and $F$ be a rhetorical biset functor for $G$ over $R$. 
Assume $G$ has a normal subgroup $K \cong C_p\times C_p$. Let $C_0, C_1, \dots, C_p$ be the distinct 
cyclic subgroups of $K$ and $Z(G)$ denote the center of $G$.

 (i) If $K$ is central, then the following sequence is split exact
$$ \xymatrix{0 \ar[r] & F(G) \ar[r]^-{\alpha} &  F(G/C_0) \oplus F( G/C_1)  \oplus \dots \oplus F(G/C_p)  
\ar[r]^-{\beta} & \oplus _p  F(G/K)) \ar[r] & 0}.$$

(ii) If $K$ is not central, we may assume that $K  \cap  Z(G) = C_0$. Let $G_0$ denote the
centralizer of $K$ in $G$. Then the following sequence is split exact 

$$ \xymatrix{0 \ar[r] & F(G) \ar[r]^-{\alpha} &  F(G/C_0) \oplus F( G_0 /C_1)  \ar[r]^-{\beta} &  F(G_0 /K) \ar[r] & 0}.$$
\end{pro}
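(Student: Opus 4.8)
The plan is to reduce both parts to a single computation: the exactness of a Mayer--Vietoris type sequence for a rhetorical biset functor applied along the inflations coming from the $p+1$ cyclic subgroups of $K\cong C_p\times C_p$. The key observation is that for a rhetorical biset functor $F$, the value $F(G)$ only sees the image of the linearization map $\lin_\Qq$, so one can replace the biset-theoretic identities among the inflation/deflation bisets $e^G_{C_i}=\infl^G_{G/C_i}\defl^G_{G/C_i}$ by the corresponding identities in the rational representation ring, where the relevant $\Qq K$-modules (the trivial module, and the $p+1$ one-dimensional inflations from the $C_i$, together with the regular module) satisfy a clean linear relation. First I would recall from Definition~\ref{def:rhetorical} and the standard relations $e^G_M e^G_N=e^G_{MN}$ that the idempotents $e^G_{C_i}$ for $i=0,\dots,p$ multiply to $e^G_K$, and that in $R_\Qq(G,G)\otimes R$ one has the identity $\sum_{i=0}^p e^G_{C_i} = p\,e^G_K + e^G_1$ coming from the character-theoretic fact that the sum of the permutation characters $\Qq[G/C_i]$ equals $p\cdot\Qq[G/K]+\Qq[G/1]$ as virtual characters restricted appropriately — more precisely the relation among the idempotents $f^G_N$ is that the faithful summand $\partial F(G) = f^G_1 F(G)$ is cut out by these. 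The map $\alpha$ in (i) is the tuple of deflations $(\defl^G_{G/C_i})_i$ and $\beta$ is built from the differences of the further deflations $\defl^{G/C_i}_{G/K}$; exactness and the splitting then follow formally once the idempotent identity is in hand.

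For part (i), after setting up the sequence I would verify $\beta\alpha=0$ directly from $\defl^{G/C_i}_{G/K}\defl^G_{G/C_i}=\defl^G_{G/K}$ (transitivity of deflation), exactness at $F(G)$ from injectivity of $\alpha$ on the complement of $\bigoplus_{i}\ker\defl^G_{G/C_i}$ — which is where the rhetorical hypothesis and the idempotent relation force $\bigcap_i\ker\defl^G_{G/C_i}=0$ since any normal subgroup of $G$ properly containing $1$ contains some $C_i$ when $K$ is central (here one uses that $K$ central and elementary abelian of rank $2$ means every nontrivial normal subgroup meets $K$ nontrivially only if... — actually one only needs that the composite $\sum_i e^G_{C_i}$-type idempotent acts as the identity on $F(G)$, which holds because $F$ is rhetorical and the corresponding virtual $\Qq$-character identity holds). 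Exactness at the middle term is the heart: given a tuple $(x_i)$ with $\beta((x_i))=0$, one produces a preimage by applying a suitable inflation and using the relation above; the splitting map is $\frac{1}{p+1}$ times a sum of inflations, or more carefully an integral splitting given by $\defl$--$\infl$ combinations, which exists because the relevant transfer idempotents are defined over $R$ without inverting $p$ thanks to the rhetorical reduction to $\Qq$-coefficients where everything is semisimple.

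For part (ii), when $K$ is not central with $K\cap Z(G)=C_0$ and $G_0=C_G(K)$, the same strategy applies but one must track that inflation from $G/C_1$ is only available after restricting to $G_0$, since $C_1$ is normal in $G_0$ but not in $G$. So the sequence mixes a deflation $\defl^G_{G/C_0}$ on the $G$-side with deflations/restrictions on the $G_0$-side. I would set $\alpha = (\defl^G_{G/C_0}, \defl^{?}\res^G_{G_0}$-type map$)$ and check the analogous $\Qq$-character identity now inside $R_\Qq(G,G_0)\otimes R$ and $R_\Qq(G_0,G_0)\otimes R$; the Mackey-type relation between $\res^G_{G_0}\infl^G_{G/C_0}$ and $\infl^{G_0}_{G_0/C_0}$ is what glues the two sides together. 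The main obstacle I anticipate is exactly this bookkeeping in part (ii): getting the right combination of restriction, inflation, and deflation bisets so that $\beta\alpha=0$ and so that the splitting is well-defined over $R$ (not just over $R[1/p]$), and verifying that the rhetorical hypothesis genuinely lets one pass to the rational side where the splitting is manifest by semisimplicity. Once the idempotent/character identity is correctly formulated on both the $(G,G_0)$ and $(G_0,G_0)$ levels, the exactness and splitting should again drop out formally by the same argument as in (i); but pinning down that identity, and confirming $G_0\trianglelefteq G$ and $C_0\trianglelefteq G$ so the needed bisets exist, is the step I expect to require the most care. I note that the statement is attributed to Hambleton--Taylor--Williams \cite{HTW}, so an alternative is simply to cite their proof and indicate that the rhetorical hypothesis is precisely the input making their $\Qq G$-Morita argument apply verbatim.
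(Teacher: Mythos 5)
This proposition is quoted by the paper as a theorem of Hambleton--Taylor--Williams: the paper gives no proof at all, only a description of the maps $\alpha$ and $\beta$ together with pointers to the specific results in \cite{HTW} (1.A.16, 5.A.1, 5.A.3). So the last sentence of your proposal --- simply cite \cite{HTW} and observe that the rhetorical hypothesis is exactly what makes their $\Qq G$-Morita argument apply --- is precisely the paper's approach, and is the only complete argument you offer.

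Your attempted direct proof, as written, has genuine gaps. The identity $\sum_{i=0}^{p}e^G_{C_i}=p\,e^G_K+e^G_1$ modulo $\ker\lin_\Qq$ is correct (it comes from $\sum_i\Qq[K/C_i]\cong p\,\Qq[K/K]\oplus\Qq K$ for $K\cong C_p\times C_p$), and since $e^G_1=\mathrm{id}$ it does give injectivity of $\alpha$ and an explicit retraction, namely $(x_i)\mapsto\sum_i\infl^G_{G/C_i}x_i-p\,\infl^G_{G/K}\defl^{G/C_0}_{G/K}x_0$. But exactness at the middle term and surjectivity of $\beta$ do not ``drop out formally'' from this single relation in $RB(G,G)$: one needs further relations governing how elements of the various $F(G/C_i)$ interact, i.e.\ the full force of the HTW detection machinery, and you do not supply them. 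Your parenthetical argument for $\bigcap_i\ker\defl^G_{G/C_i}=0$ via normal subgroups is abandoned mid-sentence (and would be false as first stated); you also conflate the idempotents $e^G_N$ and $f^G_N$ at one point. Part (ii) is not carried out at all --- you correctly identify the bookkeeping issues (note that $C_0=K\cap Z(G)$ and $G_0=C_G(K)$ are automatically normal in $G$, and $C_1\le K\le Z(G_0)$ is normal in $G_0$, so the bisets exist) but explicitly defer the key identity. As a self-contained proof the proposal is therefore incomplete; as a citation it matches the paper.
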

 
The maps $\alpha$ and $\beta$ are defined using deflation and restriction maps 
(see \cite[1.A.16, 5.A.1, and 5.A.3]{HTW}). In (i), the map $\alpha$ is given by $\prod _i \defl ^G _{G/C_i}$, and  
$\beta$ is defined as follows: $\beta |_{F(G/C_i) }= \defl ^{G/C_i} _{G/K}$  
for $1\leq i \leq p$, and $\beta |_{F( G/C_0)} =-\phi $, where $\phi$ is 
the composite $$F(G/C_0) \to F(G/K) \to \oplus _p F(G/K),$$ such that the first map is the 
deflation map and the second map is the diagonal. In (ii), the map $\alpha$ is defined as 
$\defl ^G _{G/C_0} \times \defres^G _{G_0/C_1}$, and  
$\beta : F(G/C_0) \oplus F(G_0/C_1) \to F(G_0 /K) $ is the sum of two maps: $F(G_0/C_1) \to F(G_0 /K)$ 
is the deflation map and the map $F(G/C_0) \to F(G_0/K)$ is the negative of the map $\defres ^{G/C_0} _{G_0/K}$. 

Bouc \cite[Theorem 3.1]{Bouc-Rational} shows that when $F$ is a rhetorical biset functor, the group $F(G)$  
can be expressed as a direct sum $\oplus  _{S\in \cG} \partial F(N_G(S)/S)$ where $\cG$ is a family of subgroups, 
called a \emph{genetic basis}. A $p$-biset functor $F$ is called a \emph{rational $p$-biset functor} if
there exists a genetic basis $\cG$ giving a direct sum decomposition for $F(G)$ (see \cite[Definition 10.1.3]{Bouc-Book}). 
Hence every rhetorical $p$-biset functor is rational. The converse is not true in general, but when $p$ is odd, 
the converse also holds for every $p$-biset functor (see \cite[Theorem 1.3]{Barker-Rhetorical}).
 
The most well-known examples of rhetorical biset functors are the rational representation ring $R_{\Qq}$ 
and its dual $R_{\Qq}^*$. Subfunctors and quotient functors of rhetorical $p$-biset functors are also 
rhetorical \cite[Lemma 3.2]{Barker-Rhetorical}.  Using some exact sequences of $p$-biset functors, 
one can show that the functor of Borel Smith functions $C_b$, the functor of unit group of the Burnside 
ring $B^{\times}$, and the torsion part of the Dade group $D_t^{\Omega}$ are also rhetorical $p$-biset 
functors (see \cite{BoucYalcin}, \cite[Section 3]{Barker-Rhetorical}). Therefore results on obstruction 
groups of rhetorical biset functors will apply to these $p$-biset functors. We now prove some general 
observations for obstruction groups of rhetorical biset functors.
 
\begin{pro}\label{pro:centralK} 
If $F$ is a rhetorical $p$-biset functor, then $\obs(F(G))=0=\ker r^F_G$ for every $p$-group $G$ 
with $\rk (Z(G))\geq 2$.
\end{pro}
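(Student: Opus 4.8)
The plan is to establish the two statements separately, using the Hambleton--Taylor--Williams decomposition (Proposition~\ref{pro:Morita}(i)) twice. Since $\rk(Z(G))\ge 2$, fix a central subgroup $K\le Z(G)$ with $K\cong C_p\times C_p$ and let $C_0,\dots,C_p$ be its subgroups of order $p$. Proposition~\ref{pro:Morita}(i) then gives a split exact sequence $0\to F(G)\xrightarrow{\alpha}\bigoplus_{i=0}^{p}F(G/C_i)\xrightarrow{\beta}\bigoplus_{p}F(G/K)\to 0$ with $\alpha=\prod_{i=0}^p\defl^G_{G/C_i}$.

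For the uniqueness part, injectivity of $\alpha$ forces $\partial F(G)=\bigcap_{1<N\nor G}\ker\defl^G_{G/N}\subseteq\bigcap_{i=0}^p\ker\defl^G_{G/C_i}=0$, so $\ker r^F_G=\hom_{R\cD_G}(T,F)\subseteq\partial F(G)=0$ by the lemma preceding Proposition~\ref{pro:EltAb}. This part is immediate once the Morita sequence is in hand.

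For the obstruction group, I would invoke Theorem~\ref{thm:Main2} to reduce to showing that the detection map $\rho\colon F(G)\to\varprojlim_{E\in\cA_p(G)}\overline F(E)$, $f\mapsto(\des^G_{C_G(E)/E}f)_E$, is surjective (where $\overline F(E)=F(C_G(E)/E)$). Given a compatible family $(x_E)_E$, the first step is to produce a candidate preimage: because $K$ is central we have $\overline F(C_i)=F(G/C_i)$, and compatibility of $(x_E)$ along the inclusions $C_i\hookrightarrow K$ gives $\defl^{G/C_i}_{G/K}(x_{C_i})=x_K$ for all $i$; comparing with the explicit formula for $\beta$ this means $(x_{C_0},\dots,x_{C_p})\in\ker\beta$, so there is a unique $y\in F(G)$ with $\defl^G_{G/C_i}(y)=x_{C_i}$ for every $i$. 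The remaining work is to verify $\des^G_{C_G(E)/E}(y)=x_E$ for all $E\in\cA_p(G)$.

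This verification breaks into two cases, and the second is the heart of the argument. If $E\cap K\ne 1$, some $C_i$ lies in $E\le C_G(E)$, and the composition rule for bisets gives $\des^{G/C_i}_{C_G(E)/E}\circ\defl^G_{G/C_i}=\des^G_{C_G(E)/E}$; since $\des^{G/C_i}_{C_G(E)/E}$ is exactly $\overline F$ applied to the morphism $C_i\hookrightarrow E$, applying it to $x_{C_i}=\defl^G_{G/C_i}(y)$ yields $x_E$ directly. If instead $E\cap K=1$, then $EK/E\cong K$ is a central Klein four subgroup of the section $C_G(E)/E$, whose $p+1$ subgroups of order $p$ are precisely the $EC_i/E$; using compatibility of $(x_E)$ along the inclusions $C_i\hookrightarrow EC_i$ and $E\hookrightarrow EC_i$ and again the composition rule, one checks that $\des^G_{C_G(E)/E}(y)$ and $x_E$ have the same image under $\defl^{C_G(E)/E}_{C_G(E)/EC_i}$ for each $i$. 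Now one applies Proposition~\ref{pro:Morita}(i) a \emph{second} time, to $F$ on the subquotient $C_G(E)/E$ with its central subgroup $EK/E$: this makes $\prod_{i=0}^{p}\defl^{C_G(E)/E}_{C_G(E)/EC_i}$ injective, forcing $\des^G_{C_G(E)/E}(y)=x_E$. Hence $\rho$ is surjective and $\obs(F(G))\cong\widetilde H^0(\cA_p(G);\overline F)=0$. The main obstacle is exactly this last case: the elementary abelian subgroups meeting $K$ trivially are not controlled by the deflations on $G$ itself, and one has to re-apply rhetoricality through the Morita decomposition of $F(C_G(E)/E)$ to see that the relevant deflations there have trivial common kernel.
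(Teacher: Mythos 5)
Your argument is correct, and it rests on the same two key ingredients as the paper's proof: the reduction to the Quillen category via Theorem \ref{thm:Main2}, and a double application of Proposition \ref{pro:Morita}(i) --- once to $G$ with its central subgroup $K$, and once to the sections $C_G(E)/E$ with central subgroup $EK/E$ (the paper applies it to $C_G(C)/C$ with central subgroup $CK/C$ for cyclic $C$, which is the degree-one analogue of the same move). The difference is in the execution. The paper works homologically with Oliver's Steinberg cochain complex $C^*_{\St}(F)$: it exhibits the first Hambleton--Taylor--Williams sequence as an exact subcomplex, passes to the quotient complex, and then uses the injectivity of the second $\alpha$ together with a combinatorial argument about which summands of $\bar\delta^0$ the various $F(C_G(C)/C)$ hit, in order to conclude $\ker\bar\delta^0=0$; this forces a digression (a spectral sequence argument) to justify using Oliver's complex when $R$ is not $p$-local. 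You instead prove surjectivity of the detection map onto $\varprojlim_{\cA_p}\overline F$ element by element: exactness at the middle of the first HTW sequence produces the candidate $y$, transitivity of destriction handles the subgroups meeting $K$, and injectivity of the second $\alpha$ handles those with $E\cap K=1$ via the common comparison element $x_{EC_i}$ (using $C_G(EC_i)=C_G(E)$ since $K$ is central). Your route avoids Oliver's theorem and the $p$-locality caveat entirely and is arguably more transparent; the paper's route has the advantage that the same complex manipulations are reused later (in Theorem \ref{thm:Rhetorical} and Proposition \ref{pro:H1Calc}) to compute $\obs$ and $H^1$ in the non-central case, where a bare surjectivity argument would no longer suffice.
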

  
\begin{proof}  Let $G$ be a $p$-group with $\rk(Z(G))\geq 2$ and $F$ a rhetorical $p$-biset functor. 
Consider the cochain complex $C^* _{\St} (F)$ introduced in Proposition \ref{pro:Oliver}. The first three 
terms of the sequence are given as follows: 
\[
\xymatrix{0 \ar[r] &  F(G) \ar[r]^-{\delta^{-1}}& \bigoplus \limits _{C \in \cE_1} F(C_G(C)/C) \ar[r]^-{\delta^0}&    
\bigoplus \limits _{E \in \cE _2}  \hom _{\aut_G (E)} (\St _E, F(C_G (E)/E))\ar[r] & \cdots} 
\]
where $\aut_G(E)=N_G(E)/C_G(E)$. We have 
$\ker r^F_G\cong \widetilde H^{-1} (\cA_p; \overline F)= \ker \delta ^{-1}$ and $$\obs(F(G))=\widetilde 
H^0 (\cA_p; \overline F)=\ker \delta ^0 / \text{im } \delta ^{-1}.$$ This conclusion holds for a $p$-local 
commutative ring $R$ by Proposition \ref{pro:Oliver}. To see that it also holds for an arbitrary commutative 
ring $R$, consider  the spectral sequence $$E^{i ,j} _1 \cong  \bigoplus _{E\in \cE _{i+1} } H^j (\aut_G(E); 
\hom _R (\St_E, F(C_G(E)/E) )) \Rightarrow \widetilde H^{i+j} (\cA_p; \overline F).$$ Note that the sequence 
$C^* _{\St} (F)$ is the horizontal line $E^{*, 0}_1$ on this spectral sequence. The vertical line at $i=-1$ has 
all zero entries above zero, so the calculation of the reduced 0-th cohomology group 
$\widetilde H^0 (A_p; \overline F)$ is not affected by the possibly nonzero terms at other places. In fact, when 
$G$ is a $p$-group we have $N_G(C)/C_G(C)=1$, hence in this case we also have $E^{0, j}=0$ for all $j>0$. 
Therefore even for the calculation of the first cohomology group $\widetilde H^1 (\cA_p; \overline F)$ we can 
use the cochain complex $C^*_{\St} (F)$ when $G$ is a $p$-group and $R$ is an arbitrary commutative ring.  

Let $K$ be a central subgroup isomorphic to $C_p\times C_p$. The short exact sequence given in part (i) of 
Proposition \ref{pro:Morita} is a subcomplex of this cochain complex. The quotient complex is of the form
$$\xymatrix{0\ar[r] &  0 \ar[r] &  \bigoplus \limits _{C \in \cE'_1} F(C_G(C)/C) \ar[r]^-{\bar \delta^0} &    
\bigoplus \limits _{E \in \cE' _2}  \hom _{\aut_G (E)} (\St _E, F(C_G (E)/E))\ar[r] & \cdots},$$
where $\cE_1'=\cE_1  \backslash \{C_0, \dots, C_p\} $ and $\cE_2'=\cE_2\backslash \{ K\}$. Note that 
this quotient complex has the same cohomology groups as the cochain complex $C^* _{\St} (F)$. 
From this one sees easily that $\ker r^F_G=0$ and 
$\obs(F(G))=\widetilde H^0 (\cA_p ; \overline F )=\ker \bar \delta^0$. 

For each $C \in \cE'_1$, the quotient group $CK/C$ is a central subgroup of $C_G(C)/C$ and it is isomorphic 
to $C_p\times C_p$. Hence for every $C \in \cE'_1$, there is a short exact sequence of the form
\begin{equation*}\label{eqn:central} 
\xymatrix{0 \ar[r] & F(C_G(C)/C) \ar[r]^-{\alpha} & 
\bigoplus \limits _{i=0}^p F(C_G(C) /C_i C) \ar[r]^-{\beta} & \oplus _p F(C_G(C)/ CK) \ar[r] & 0.}
\end{equation*}
The $i$-th component of $\alpha$ coincides with the summand of $\bar \delta ^0$ corresponding to the index 
pair $(C,E)$ where  $E=CC_i \in \cE'_2$.  If $C$ and $C'$ are two different conjugacy class representatives 
in $\cE'_1$ such that $C C_i =C'C_j$, then $i=j$, and $C$ and $C'$ are two subgroups of $E=CC_i$ which 
are not conjugate to each other. Thus we must have $\aut_G(E)=N_G(E)/C_G(E)=1$, and under $\bar \delta ^0$ 
the elements in $F(C_G(C)/C)$ and $F(C_G(C')/C')$ are mapped to two different summands in 
$$\hom _{\aut_G(E) } (\St_E, F(C_G(E)/E) ) \cong \St_E^* \otimes F(C_G(E)/E) \cong \oplus _{p} F(C_G (E)/E).$$  
We conclude that $\obs(F(G))=\ker \bar \delta^0=0$.
\end{proof}

Now we consider the case where $G$ is a finite $p$-group and $Z(G)$ is cyclic. In this case $G$ has a normal subgroup isomorphic to 
$C_p\times C_p$ or $G$ is a Roquette group, i.e., $G$ is a cyclic group $C_{p^n}$, for $n\geq 0$, or generalized 
quaternion group $Q_{2^n}$, for $n \geq 3$, or dihedral group $D_{2^n}$, for $n\geq 4$, or a semi-dihedral group 
$SD_{2^n}$, for $n\geq 4$ (see \cite[Section 9.3]{Bouc-Book}). If $G$ is a Roquette group, then it is easy to write 
explicit projective resolutions and calculate $\obs(F(G))$. If $G$ has a normal subgroup $K \cong C_p\times C_p$, 
then $Z=K\cap Z(G)$ is a central subgroup of order $p$. If $A_1, A_2, \dots, A_p$ denote the subgroups 
of $K$ which are not central in $G$, then $H=C_G(K)=C_G(A_i)$ is a subgroup of  $G$ of index $p$ for all $i$.  
Note that if $g \in G \backslash H$, then $g$ permutes the subgroups $\{ A_1, \dots, A_p\}$.

Let $\cA_{\geq 2} (G)$ denote the poset of elementary abelian subgroups of $G$ of rank greater than or equal to 2. 
This poset is non-empty if and only if  $\rk (G) \geq 2$. If $\rk (Z(G) )\geq 2$, then $\cA_{\geq 2} (G)$ is contractible 
to a  central subgroup $K \cong C_p\times C_p$ by the canonical contractions $K \leq KE \geq E$. If $\rk (Z(G))=1$, 
i.e., $Z(G)$ is cyclic, then  there is a connected component $\cB (G)$ in $\cA_{\geq 2} (G)$ 
such that  any connected component of $\cA_{\geq 2} (G)$ different from $\cB (G)$ consists of a single subgroup 
of rank 2 (see \cite[Lemma 10.21]{GLS}).  

Note that if $\rk (G)=2$, then $A_{\geq 2} (G)$ is just a set of points. In that case, 
we choose one of the rank 2 elementary abelian subgroups as the component $\cB(G)$. If one of these subgroups is normal 
in $G$, we always choose the normal subgroup as the component $\cB (G)$. If $\rk(G)\geq 3$, then $G$ cannot be a Roquette 
group, hence it has a normal subgroup $E \nsub G$ of rank $2$. In this case $E$ lies in $\cB (G)$. 
We summarize the properties of the poset $\cA_{\geq 2}(G)$ in the following lemma 
(see \cite[Lemma 3.1]{BoucThev-Glue}). 
 
\begin{lem}\label{lem:pgroups} 
Let $G$ be a $p$-group with $\rk (G) \geq 2$ and assume that $Z(G)$ is cyclic. \\
(a) $G$ has a unique central subgroup $Z$ of order p. Moreover $Z$ is contained in every
maximal elementary abelian subgroup of $G$. \\
(b) There is a connected component $\cB (G)$ in $\cA_{\geq 2} (G)$ such that  any connected component of 
$\cA_{\geq 2} (G)$ different from $\cB (G)$ consists of a single subgroup of rank 2 (an isolated vertex). \\
(c) If $E$ is a subgroup in $\cA_{\geq  2} (G) - \cB (G)$ and if $S$ is a subgroup of $E$ such that
$E = Z \times S$, then $N_G (S)/S$ is cyclic or generalized quaternion.\\
(d) If $E$ is a subgroup in $\cA_{\geq 2} (G) - \cB (G)$, then all the subgroups $S$ of $E$ of order $p$
distinct from $Z$ are conjugate in $G$.   
\end{lem}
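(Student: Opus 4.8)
The plan is to treat the four assertions separately: (a) is a one-line argument, the combinatorial statement (b) follows from an elementary ``coning with $Z$'' trick together with a centraliser argument, and the finer statements (c)--(d) reduce to an analysis of $\aut_G(E)$ for a maximal elementary abelian $E$ of rank $2$, the residual configurations being exactly the $p$-group structure theory of \cite[Lemma 10.21]{GLS}. For (a): a cyclic $p$-group has a unique subgroup $Z$ of order $p$, so $Z$ is characteristic in $G$, in particular $Z\nor G$. If $E$ is a maximal elementary abelian subgroup then $Z\le Z(G)\le C_G(E)$, so $Z$ centralises $E$; since $Z$ is central of order $p$, $ZE$ is again elementary abelian, and maximality of $E$ forces $ZE=E$, i.e.\ $Z\le E$.

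For (b), the point is that since $Z$ is central, for every $E\in\cA_{\geq 2}(G)$ the subgroup $ZE$ is elementary abelian, lies in $\cA_{\geq 2}(G)$, and satisfies $E\le ZE$, with $ZE=E$ if and only if $Z\le E$; also any subgroup of rank $\ge 3$ properly contains rank-$2$ subgroups. From this one checks that the isolated vertices of $\cA_{\geq 2}(G)$ are exactly the maximal elementary abelian subgroups of rank $2$ (which contain $Z$ by (a)), and that every non-isolated vertex is joined in the poset to some $F$ with $Z\le F$ and $\rk(F)\ge 3$ (replace $E$ by $ZE$ when $Z\not\le E$, and by a proper overgroup when $\rk(E)=2$). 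If $\rk(G)\ge 3$ then $G$ is not Roquette (\cite[Section 9.3]{Bouc-Book}), so it has a normal $E_0\cong C_p\times C_p$, necessarily with $Z\le E_0$; put $H=C_G(E_0)$, which has index $p$ in $G$ because $E_0\not\le Z(G)$. For $F$ as above, $\rk(F\cap H)\ge 2$ and $(F\cap H)E_0$ is elementary abelian, so we get comparabilities $F\supseteq F\cap H\subseteq (F\cap H)E_0\supseteq E_0$, placing $F$ (hence every non-isolated vertex) in the component $\cB(G)$ of $E_0$. When $\rk(G)=2$ the poset consists of isolated points and $\cB(G)$ is designated as in the statement, preferring a normal rank-$2$ subgroup when one exists.

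For (c)--(d), fix an isolated vertex $E=Z\times S$, so $E$ is a maximal elementary abelian subgroup of rank $2$, and set $C=C_G(E)$. Since $Z\nor G$, the group $\aut_G(E)=N_G(E)/C$ stabilises the line $Z\le E$, hence embeds into the Borel subgroup of $GL_2(p)$ fixing that line; being a $p$-group it has order $1$ or $p$. It cannot be trivial: otherwise $N_G(E)=C>E$, so $E\le Z(C)$ and $\Omega_1(Z(C))$ is an elementary abelian subgroup containing $E$; it cannot have rank $\ge 3$ (maximality of $E$), so $\Omega_1(Z(C))=E$ is characteristic in $C$, giving $N_G(C)\le N_G(E)=C$, whence $C=G$ and $E\le Z(G)$, contradicting that $Z(G)$ is cyclic. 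So $\aut_G(E)\cong C_p$, acting as the unipotent subgroup of the Borel, which permutes the $p$ lines of $E$ distinct from $Z$ in a single cycle; these lines are precisely the order-$p$ subgroups $S\ne Z$ of $E$, so they are all $N_G(E)$-conjugate, proving (d).

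For (c) we have $N_G(S)=C_G(S)$, since $\aut(C_p)$ has no $p$-torsion, so $S\le Z(N_G(S))$, and the claim is that $N_G(S)/S$ has $p$-rank $1$. Supposing $C_p\times C_p\le N_G(S)/S$, one pulls back to a subgroup $A\le N_G(S)$ of order $p^3$ with $S\le Z(A)$, locates inside $A$ an elementary abelian $B\cong C_p\times C_p$ with $S\le B$, and observes that $ZB\supseteq ZS=E$ is elementary abelian; if $Z\not\le B$ this has rank $3$, contradicting maximality of $E$. The main obstacle is disposing of the remaining configurations --- $Z\le B$ (forcing $B=E$ and $A$ a small overgroup of $E$) and the non-abelian choices of $A$, notably $A\cong Q_8$ when $p=2$ --- which requires a genuine structural analysis of $C_G(E)$ and $N_G(S)$; this is the content of \cite[Lemma 10.21]{GLS} (compare \cite[Lemma 3.1]{BoucThev-Glue}), which I would invoke to finish.
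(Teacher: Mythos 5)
The paper does not actually prove this lemma: it is quoted verbatim from \cite[Lemma 3.1]{BoucThev-Glue} (which in turn rests on \cite[Lemma 10.21]{GLS}), so your proposal is being compared against a citation rather than an argument. What you do prove, you prove correctly, and in more detail than the paper offers. Part (a) is fine. For (b), your reduction is sound: the isolated vertices are exactly the maximal rank-$2$ subgroups, every non-isolated vertex is linked to some $F$ with $Z\le F$ and $\rk F\ge 3$, and the zig-zag $F\supseteq F\cap H\subseteq (F\cap H)E_0\supseteq E_0$ (using $[G:H]=p$, so $\rk(F\cap H)\ge 2$) puts all such $F$ in one component; this is a clean, self-contained argument. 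Part (d) is also complete: the two facts that $\aut_G(E)$ is a $p$-subgroup of the Borel fixing $Z$, and that it cannot be trivial (via $\Omega_1(Z(C_G(E)))=E$ forcing $C_G(E)=G$), give $\aut_G(E)\cong C_p$ acting as a nontrivial unipotent, which permutes the $p$ lines other than $Z$ transitively. The only place where you fall short of a full proof is (c): your observation that a rank-$2$ subgroup of $N_G(S)/S$ pulls back to $A$ of order $p^3$ with $S$ central, and that any $B\cong C_p\times C_p$ in $A$ with $S\le B$ and $Z\not\le B$ would violate maximality of $E$ via $ZB$, handles the generic case, but the residual configurations you name --- $A\cong Q_8$ for $p=2$, and abelian $A\cong C_{p^2}\times C_p$ with $\Omega_1(A)=E$, neither of which is excluded by the maximality of $E$ alone --- are genuinely the crux of (c) and do require the finer structural analysis of $N_G(S)$ in \cite[Lemma 10.21]{GLS}. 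Since you defer exactly to the reference that the paper itself cites for the whole lemma, your treatment is at least as complete as the paper's; just be aware that (c) is the one part that is not self-contained in your write-up.
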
   
   
The properties of the poset $\cA_{\geq 2} (G)$ can be used to prove the following.

\begin{thm}\label{thm:Rhetorical} 
Let $F$ be a rhetorical $p$-biset functor. Then for every $p$-group $G$ with $\rk (G) \geq 2$, we have 
$\ker r^F_G=0$ and $$\obs (F(G)) \cong \bigoplus \limits _{S \in \cS}\partial F(C_G(S)/S )$$ where the sum 
is taken over the conjugacy class representatives of cyclic subgroups $S$ such that $E=S\times Z$ belongs 
to $\cA_{\geq 2} (G)- \cB(G)$.  In particular, if  $F$ is such that for every cyclic or quaternion group $H$, $\partial F(H)$ is 
isomorphic to a fixed abelian group $A$, then $$\obs (F(G)) \cong \widetilde H^0 ( \cA_{\geq 2} (G)/G ; A).$$   
\end{thm}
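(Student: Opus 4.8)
The plan is to use the cochain complex $C^*_{\St}(F)$ from Proposition~\ref{pro:Oliver}, exploiting the structure of $\cA_{\geq 2}(G)$ described in Lemma~\ref{lem:pgroups}, and to reduce the computation step by step until only the isolated components of $\cA_{\geq 2}(G) - \cB(G)$ survive. First I would dispose of the case $\rk(Z(G)) \geq 2$, which is already handled by Proposition~\ref{pro:centralK}: there $\cB(G) = \cA_{\geq 2}(G)$, so the indexing set $\cS$ is empty and both sides vanish. So assume from now on that $Z(G)$ is cyclic, hence by Lemma~\ref{lem:pgroups}(a) there is a unique central subgroup $Z$ of order $p$ contained in every maximal elementary abelian subgroup.

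The key reduction is to replace $\cA_p(G)$, over which $\widetilde H^*(\cD_G^*; F) \cong \widetilde H^*(\cA_p(G); \overline F)$ by Proposition~\ref{thm:SubQuillen}, with a much smaller category. Since $F$ is rhetorical, it is rational, and Bouc's decomposition $F(H) \cong \bigoplus_{S \in \cG} \partial F(N_H(S)/S)$ applies to each value $F(C_G(E)/E)$; the point is that the destriction maps in $C^*_{\St}(F)$ respect this decomposition in a way that lets one peel off the faithful-summand contributions. More concretely, I would argue that the subgroups $E \in \cA_{\geq 2}(G)$ that contain $Z$ — i.e.\ those lying in $\cB(G)$ together with the central rank-$2$ part — contribute a subcomplex whose cohomology vanishes, using the Hambleton--Taylor--Williams exact sequence (Proposition~\ref{pro:Morita}) applied repeatedly, exactly as in the proof of Proposition~\ref{pro:centralK} but now for the connected component $\cB(G)$ rather than the whole poset. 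This is the technical heart: one needs that the portion of $C^*_{\St}(F)$ indexed by sections $(C_G(E)/E)$ with $Z \leq E$ and $E$ in the "big" component is acyclic in reduced degrees, because each such $E = Z \times (E \cap \text{something})$ admits a central $C_p \times C_p$ inside $C_G(E)/E$ allowing the HTW splitting. Once that subcomplex is acyclic, the long exact sequence identifies $\widetilde H^*(\cA_p(G); \overline F)$ with the reduced cohomology of the quotient complex supported on the isolated rank-$2$ subgroups $E \in \cA_{\geq 2}(G) - \cB(G)$.

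For such an isolated $E$, by Lemma~\ref{lem:pgroups}(d) all order-$p$ subgroups $S < E$ with $S \neq Z$ are $G$-conjugate, so $N_G(E)$ acts on $E$ fixing $Z$ and permuting the other lines transitively; this forces the relevant $\aut_G(E)$-invariants and the Steinberg-module contribution $\hom_{\aut_G(E)}(\St_E, F(C_G(E)/E))$ to single out precisely $\partial F(C_G(S)/S)$ for one representative line $S$. Tracking degrees: $E$ has rank $2$, so it contributes to $C^1_{\St}$, while $S$ (rank $1$) contributes to $C^0_{\St}$; the faithful summand $\partial F(C_G(E)/E)$ vanishes because $C_G(E)/E$ has a nontrivial normal subgroup killing it (here one uses that $E$ is isolated, so $C_G(E) = C_G(S)$ has larger center image), leaving $\obs(F(G)) = \widetilde H^0$ as the kernel of $\bar\delta^0$ restricted to $\bigoplus_S \partial F(C_G(S)/S)$, and one checks this map is zero, giving the direct-sum formula. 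That $\ker r^F_G = 0$ follows because $\widetilde H^{-1}$ sits in the subcomplex already shown acyclic (the term $F(G)$ maps injectively once we have the HTW splitting at the bottom, since $\rk(G) \geq 2$ forces a $C_p \times C_p$).

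For the final ``in particular'' clause, assume $\partial F(H) \cong A$ for every cyclic or generalized quaternion $H$. By Lemma~\ref{lem:pgroups}(c), for each isolated $E = S \times Z$ the group $N_G(S)/S$ is cyclic or generalized quaternion; one must also identify $C_G(S)/S$ (not $N_G(S)/S$) — but here $C_G(S) = C_G(E)$ and $C_G(E)/E$ is a subquotient whose relevant faithful part reduces, via the isogation and the fact that $E$ is isolated, to a cyclic-or-quaternion group, so $\partial F(C_G(S)/S) \cong A$. Thus $\obs(F(G)) \cong \bigoplus_{S \in \cS} A$, indexed by $G$-conjugacy classes of such $S$, which are in bijection with the isolated vertices of $\cA_{\geq 2}(G)/G$, i.e.\ with the nontrivial reduced $0$-cohomology classes of the orbit space. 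Since $\cB(G)/G$ is the unique component contributing nothing, $\bigoplus_{S \in \cS} A \cong \widetilde H^0(\cA_{\geq 2}(G)/G; A)$. I expect the main obstacle to be the acyclicity of the $\cB(G)$-subcomplex in \emph{all} reduced degrees using only iterated HTW sequences — Proposition~\ref{pro:centralK} does this when the component is contractible to a central $C_p \times C_p$, and one must check the argument still goes through when $\cB(G)$ is merely connected (not necessarily contractible as a poset, though its orbit space is), possibly by an induction on $|G|$ or by combining with Proposition~\ref{pro:Constant}.
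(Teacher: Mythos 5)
Your overall strategy is the same as the paper's: work with Oliver's cochain complex $C^*_{\St}(F)$ via Proposition \ref{thm:SubQuillen}, strip away Hambleton--Taylor--Williams short exact sequences as subcomplexes, and use Lemma \ref{lem:pgroups} to see that only the isolated vertices of $\cA_{\geq 2}(G)$ survive. The endgame is also right: for a surviving $S$ the group $C_G(S)/S$ is cyclic or generalized quaternion (being a subgroup of $N_G(S)/S$), it has a unique minimal normal subgroup $SZ/S$, and the kernel of $F(C_G(S)/S)\to F(C_G(S)/SZ)$ is exactly $\partial F(C_G(S)/S)$; the count of isolated components then gives the $\widetilde H^0(\cA_{\geq 2}(G)/G;A)$ statement.

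The gap is in the step you yourself flag as the ``technical heart.'' Your plan is to exhibit one large acyclic subcomplex ``indexed by the subgroups $E\supseteq Z$ lying in $\cB(G)$,'' but this does not carve $C^*_{\St}(F)$ the way you need. First, by Lemma \ref{lem:pgroups}(a) \emph{every} maximal elementary abelian subgroup contains $Z$, so the isolated vertices of $\cA_{\geq 2}(G)-\cB(G)$ also contain $Z$ and your proposed partition does not separate them from $\cB(G)$; second, the degree $-1$ and $0$ terms of $C^*_{\St}(F)$ are indexed by the trivial group and by rank-one subgroups, which belong to no component of $\cA_{\geq 2}(G)$, so ``the subcomplex of the big component'' is not well defined; third, the $\cB(G)$-part is \emph{not} acyclic in all reduced degrees in general (its $H^1$ is essentially $H^1(\cA_{\geq 2}(G)/G;A)$, which is nonzero for many $G$ --- this is the content of Proposition \ref{pro:H1Calc}). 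What is actually needed, and what you do not supply, is the elimination of the degree-$0$ summands $F(C_G(C)/C)$ for those rank-one $C$ with $E=CZ$ non-maximal. For $C\le H=C_G(K)$ this works by your HTW mechanism, since $KC/C$ is a normal $C_p\times C_p$ in $C_G(C)/C$. But for $C\not\le H$ the subgroup $K$ does not centralize $C$ and no HTW sequence is available in that form; the paper instead runs an induction on $|G|$, applying the statement $\ker r^F_{\,\overline{C}_G(C)}=0$ to the strictly smaller group $\overline{C}_G(C)=C_G(C)/C$ (which has rank $\ge 2$ because $CZ$ is non-maximal) and identifying the resulting injection $\varphi_C$ with a block of $\bar\delta^0$ after checking $C_{\overline{C}_G(C)}(\overline D)/\overline D\cong C_G(D)/D$. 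Without this inductive case your reduction does not close, and ``possibly by an induction on $|G|$'' is exactly the missing argument rather than an optional variant.
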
   
  
 \begin{proof}  If $\rk ( Z(G)) \geq 2$, then $\cA_{\geq 2} (G)$ is contractible by a $G$-equivariant homotopy, 
 thus $\cA _{\geq 2} (G)/G$ is contractible. Hence in this case the result follows from Lemma \ref{pro:centralK}. 
So we assume that $Z(G)$ is cyclic. The statement is easy to verify for Roquette groups, so let us assume 
that $G$ has a normal subgroup $K\cong C_p\times C_p$. Let $Z$ be the central subgroup of $K$ and 
$A_1, \dots , A_p$ denote the non-central subgroups of $K$ of order $p$. Note that $A_1, \dots , A_p$ 
are all conjugate to each other. Let $H=C_G(K)$.

Consider the cochain complex $C_{\St} ^* (F)$. The short exact sequence given in part (ii) of Proposition 
\ref{pro:Morita}, is a subcomplex of this cochain complex. The quotient complex is of the form
 $$\xymatrix{0\ar[r] &  0 \ar[r] &  \bigoplus \limits _{C \in \cE'_1} F(C_G(C)/C) \ar[r]^-{\bar \delta^0} &    
 \bigoplus \limits _{E \in \cE' _2}  \hom _{\aut_G (E)} (\St _E, F(C_G (E)/E))\ar[r] & \cdots}$$
where $\cE_1'=\cE_1  \backslash \{Z, A_1\} $ and $\cE_2'=\cE_2\backslash \{ K\}$. Note that this quotient 
complex has the same cohomology groups as the cochain complex $C^* _{\St} (F)$. From this we obtain 
that $\ker r^F_G=0$ and $\obs(F(G))=\widetilde H^0 (\cA_p ; \overline F )=\ker \bar \delta^0 $. 

Suppose that $C \in \cE_1'$ is such that $E:=CZ$ is not maximal. This means that $E$ belongs to 
the big component $\cB(G)$ of $\cA_{\geq 2} (G)$. 

Case 1: Assume that $C\leq H=C_G(K)$. Then $K$ centralizes $C$, and $KC/C$ is a normal subgroup 
of $C_G(C)/C$ and is isomorphic to  $C_p \times C_p$. If $KC/C$ is central in $C_G(C)/C$, then we 
have a short exact sequence of the form
\[
\xymatrixcolsep{1.2pc}
\xymatrix{0 \ar[r] & F(C_G(C)/C) \ar[r]^-{\alpha} & F(C_G(C)/CZ) \oplus  \bigoplus \limits _{i=1}^p 
F(C_G(C) /CA_i) \ar[r]^-{\beta} & \oplus _p F(C_G(C)/ CK) \ar[r] & 0.}
\]
If $KC/C$ is not central in $C_G(C)/C$, then we have a short exact sequence of the form
\[
\xymatrixcolsep{1.2pc}\xymatrix{0 \ar[r] & F(C_G(C)/C) \ar[r]^-{\alpha} & F(C_G(C)/CZ) \oplus  
F(C_G(C A_1) /CA_1) \ar[r]^-{\beta} & F(C_G(CK)/ CK) \ar[r] & 0.}
\]
Note that both of these sequences are subcomplexes of the quotient complex of $C_{\St} ^* (F)$. 
So we can take further quotients and assume that the sum in the first term is over the set $\cE_1 '' $ 
of conjugacy class representatives of all  cyclic subgroups $C$ of $G$ such that $C \not \leq H$.

Case 2: Assume $C$ is such that $CZ$ is not maximal, but $C\not \leq H$. Since there is a rank 3 subgroup 
$E' \geq E$, we have $\rk (C_G(C)/C)\geq 2$. Let $\overline C_G(C):=C_G(C)/C$. By induction we assume 
that $\ker r^F_{\overline C_G(C)}=0$. Hence the coboundary map $\delta ^{-1}$ for the group $\overline C_G(C)$ 
is an injective map
$$ \varphi _C :  F(\overline C_G(C)) \to \bigoplus _{\overline D \in \mathcal{T}_1} 
F( C_{\overline C_G(C)} (\overline D) / \overline D),$$ where $\mathcal{T}_1$ denotes the set of 
$\overline C_G(C)$-conjugacy class representatives of subgroups of $\overline C_G(C)$ such that 
$\overline D =D/C \cong C_p$. Since $C$ is not contained in $H$, and $H$ is a subgroup 
of index $p$ in $G$, the subgroup $D$ cannot be a cyclic group 
of order $p^2$, hence we have $D \cong C_p\times C_p$. Let $B= H \cap D$, then we have  $D=BC \cong B \times C$. 
Note that $N_G(D) \leq N_G(B)=C_G(B)$. This implies that $C_{\overline C_G(C)} (\overline D)/\overline D \cong C_G(D)/D$. 
Hence the map 
$$\varphi_C: F(C_G(C)/C) \to \bigoplus \limits _{\overline D \in \mathcal{T}_1} F( C_G(D) / D ) $$ is injective. 
Let $\cE_1''$ denote the set of $G$-conjugacy class representatives of cyclic subgroups $C$ of $G$ considered 
in this case. Then if we take the direct sum of the maps $\varphi _C$ over all elements in $\cE_1''$, we get a map 
from $\oplus _{C\in \cE_1''} F(C_G(C)/C)$ to the group 
$$\bigoplus _{C\in \cE_1''} \bigoplus _{\overline D \in \mathcal{T}_1} F(C_G(D)/D) =\bigoplus_ {D\in \cE_2''} 
\hom _{\aut_G(D)} (\St_D , F(C_G(D)/D)) $$
where $\cE_2 ''$ is the set of $G$-conjugacy classes of rank 2 elementary abelian subgroups of $G$ such that 
$D=(D\cap H) C$ with $CZ$ not maximal and $D \not \leq H$. Note that this map appears in the quotient complex 
of $C_{\St}^*(F)$. Hence by an argument similar to the argument above, we can delete summands of the form 
$F(C_G(C)/C)$ where $E=CZ$ belongs to the big component, but $C\not \leq H$.

The remaining cyclic subgroups $C$ satisfy the conditions (c) and (d) of Lemma \ref{lem:pgroups}. In particular, 
$E=CZ$ is a maximal rank 2 subgroup and in $E$ all subgroups of order $p$ distinct from $Z$ are conjugate. 
For each such $C$, we have a map $$ F(C_G(C)/C) \to F(C_G(C) / CZ).$$ By condition (c), the quotient group 
$L=C_G(C)/C$ is cyclic or isomorphic to a generalized quaternion group, and the kernel of this map is $\partial F (L)$. 
Hence we obtain the isomorphism for $\obs(F(G))$ given in the statement of the theorem. If $\partial F (L)$ is equal 
to a fixed abelian group $A$, then we obtain   $$\obs (F(G)) \cong \widetilde H^0 (\cA_{\geq 2} (G) /G ; A).$$ 
\end{proof} 
 
In the next section we discuss how this result applies to some $p$-biset functors related to endo-permutation 
modules and to the dual of the rational representation ring functor.


\section{Gluing endo-permutation modules}\label{sect:GluingEndo}  
 
Let $G$ be a $p$-group and $k$ a field of characteristic $p$. A $kG$-module $M$ is called an 
\emph{endo-permutation module} if $\hom _k (M, M) \cong M\otimes M^*$ is a permutation $kG$-module. 
An endo-permutation module is \emph{capped} if $M\otimes M^*$ has the trivial module $k$ as a summand. 
Under a suitably defined equivalence relation on endo-permutation modules, the set of equivalence classes 
$[M]$ of capped endo-permutation modules forms an abelian group called the \emph{Dade group} of $G$, denoted  
$D(G)$. The addition in $D(G)$ is defined  by $[M_1]+[M_2]=[M_1\otimes M_2]$ (see \cite[Definition 12.2.8]{Bouc-Book}, 
\cite{Dade}). 

Given a nonempty $G$-set $X$, the kernel of the augmentation map $kX\to k$ is a capped endo-permutation 
module if $|X^G|\neq 1$. In this case, the equivalence class of this endo-permutation module is called a relative 
syzygy and is denoted $\Omega _X$. The subgroup of $D(G)$  generated by relative syzygies $\Omega_{X}$ 
over all finite nonempty $G$-sets $X$ with $X^G=\emptyset$ is denoted $D^{\Omega}(G)$ 
(see \cite[Definition 12.6.3]{Bouc-Book}). The assignment $G \to D(G)$  together with suitably defined restriction, 
induction, isogation, deflation, and inflation maps is not a biset functor because of some Galois twists that occur, 
however the Dade group of relative syzygies $D^{\Omega}(G)$ is a $p$-biset functor (see \cite[Theorem 12.6.8]{Bouc-Book}).  
It is also known that when $p$ is odd we have $D^{\Omega}=D$, and hence in this case $D$ is also a $p$-biset functor 
(see \cite[Theorem 12.9.10]{Bouc-Book}).

The torsion part of the Dade group $D^{\Omega}$ is a subfunctor of $D^{\Omega}$, denoted $D^{\Omega}_t$.
Let $B^* $ denote the dual of the Burnside ring functor, $R_{\Qq} ^*$  the dual 
of the rational representation ring functor, and $C_b$ the functor for the group of Borel-Smith functions. 
Then by Theorem 1.2 and 1.3 in \cite{BoucYalcin}, there is a commutative diagram of $p$-biset functors of the form
\[\xymatrix{ 0 \ar[r] & C_b \ar[r] \ar@{=}[d] & R^* _{\Qq} \ar[r] \ar[d] & D_t ^{\Omega}  \ar[r] \ar[d] & 0 \\
0 \ar[r] & C_b \ar[r] & B^* \ar[r] \ar[d] & D^{\Omega} \ar[r] \ar[d] & 0 \\
& & D^{\Omega} /D^{\Omega }_t \ar@{=}[r]  & D^{\Omega} /D^{\Omega }_t & }\]
 Note that if $p$ is odd, we have $D^{\Omega}=D$.

From this diagram it is clear that the functors $C_b$ and $D_t^{\Omega}$ are both rhetorical $p$-biset functors since 
they are subfunctors and quotient functors of the dual of the rational representation ring functor $R_{\Qq}^*$, which is 
rhetorical (see \cite[Lemma 3.2]{Barker-Rhetorical}). Hence if $F$ is equal to $D_t^{\Omega}$, $C_b$, or $R_{\Qq}^*$, then $\obs(F(G))=0$ for every $p$-group 
$G$ with $\rk (Z(G))\geq 2$. Co\c skun uses this calculation to give another proof for the rank calculation for the group 
of endo-trivial modules $T(G)$ (see \cite[Theorem 6.1]{Coskun-GlueBS}).

Now let us assume that $p$ is odd. Then the torsion part of the Dade group $D_t$ is equal to  $D^{\Omega }_t$, 
hence it is a rhetorical $p$-biset functor. It is also known that $\partial D_t (G) \cong \Zz/2$ for every nontrivial 
cyclic group $G$. So as a corollary of Theorem \ref{thm:Rhetorical}, we recover the following theorem of 
Bouc and Th\' evenaz \cite[Theorem 1.1]{BoucThev-Glue}. 

\begin{pro}[Bouc-Th\' evenaz]\label{pro:BoucThev}  
If $p$ is an odd prime number and $G$ is a $p$-group 
that is not cyclic, then there is a short exact sequence of abelian groups 
\[
\xymatrix{0 \ar[r] & D_t (G) \ar[r]^-{r^{D_t} _G} & \varprojlim \limits_{1<H\le
G} D_t (N_G(H)/H)  \ar[r] &  \widetilde H^0 (\cA_{\geq 2} (G) ; \Ff _2 ) ^G \ar[r] & 0}
\] where $H^0 (\cA_{\geq 2} (G) ; \Ff _2 ) ^G$ denotes the group consists of all $G$-invariant cohomology 
classes under the $G$-action induced by the conjugation action on $\cA_{\geq 2} (G)$.
\end{pro}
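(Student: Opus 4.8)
The plan is to read this off from Theorem~\ref{thm:Rhetorical} applied to the $p$-biset functor $F = D_t$, so the first task is to confirm the hypotheses. Since $p$ is odd we have $D_t = D_t^{\Omega}$, and, as observed just before the statement, this functor is rhetorical: it is a quotient functor of the dual rational representation ring functor $R_{\Qq}^*$, which is rhetorical, and quotient functors of rhetorical $p$-biset functors are rhetorical by \cite[Lemma~3.2]{Barker-Rhetorical}. Moreover a non-cyclic $p$-group with $p$ odd necessarily has $p$-rank at least $2$ (a $p$-group, $p$ odd, all of whose elementary abelian subgroups have rank $\le 1$ is cyclic), so $\rk(G) \ge 2$ and Theorem~\ref{thm:Rhetorical} applies to $G$.

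Applying that theorem yields $\ker r^{D_t}_G = 0$, which accounts for the injectivity of $r^{D_t}_G$ in the desired sequence, together with an isomorphism $\obs(D_t(G)) \cong \bigoplus_{S \in \cS} \partial D_t(C_G(S)/S)$, the sum running over conjugacy class representatives of cyclic subgroups $S$ such that $S \times Z \in \cA_{\geq 2}(G) - \cB(G)$. Next I would note that each quotient $C_G(S)/S$ occurring here is a nontrivial cyclic $p$-group: by the computation in the proof of Theorem~\ref{thm:Rhetorical} (using Lemma~\ref{lem:pgroups}(c)) it is cyclic or generalized quaternion, hence cyclic because $p$ is odd, and it is nontrivial because it contains the image of the central subgroup $Z \cong C_p$. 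Since $\partial D_t(C) \cong \Zz/2 = \Ff_2$ for every nontrivial cyclic $p$-group $C$, as recalled above, the constancy hypothesis in the last clause of Theorem~\ref{thm:Rhetorical} holds with $A = \Ff_2$ on all the groups entering the sum, whence $\obs(D_t(G)) \cong \widetilde H^0(\cA_{\geq 2}(G)/G ; \Ff_2)$.

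The last step is to identify $\widetilde H^0(\cA_{\geq 2}(G)/G ; \Ff_2)$ with $\widetilde H^0(\cA_{\geq 2}(G) ; \Ff_2)^G$: since $\Ff_2$ carries the trivial $G$-action, the $G$-fixed points of $H^0(\cA_{\geq 2}(G) ; \Ff_2) = \Ff_2^{\pi_0(\cA_{\geq 2}(G))}$ are exactly the functions on $\pi_0(\cA_{\geq 2}(G))$ that are constant on $G$-orbits, that is, the functions on $\pi_0(\cA_{\geq 2}(G))/G = \pi_0(\cA_{\geq 2}(G)/G)$, and the same identification holds after passing to reduced cohomology. Substituting $\ker r^{D_t}_G = 0$ and $\obs(D_t(G)) \cong \widetilde H^0(\cA_{\geq 2}(G) ; \Ff_2)^G$ into the four-term exact sequence of the gluing problem then produces the stated short exact sequence. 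I do not expect a real obstacle here: the argument is a direct specialization of the apparatus already developed, and the only essential input from outside it is the value $\partial D_t(C) \cong \Zz/2$ for $C$ a nontrivial cyclic $p$-group --- a computation from the theory of endo-permutation modules over cyclic $p$-groups; the remaining ingredients, namely that odd non-cyclic $p$-groups have rank $\ge 2$ and that $D_t$ is rhetorical, are routine.
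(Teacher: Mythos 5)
Your proposal is correct and follows essentially the same route as the paper: apply Theorem \ref{thm:Rhetorical} to the rhetorical functor $D_t=D_t^{\Omega}$, use $\partial D_t(C)\cong\Zz/2$ for nontrivial cyclic $C$ (quaternion groups being excluded since $p$ is odd), and identify $\widetilde H^0(\cA_{\geq 2}(G)/G;\Ff_2)$ with $\widetilde H^0(\cA_{\geq 2}(G);\Ff_2)^G$. The only point you gloss over is that last identification at the level of \emph{reduced} cohomology (fixed points need not commute with quotienting by the constants in general); the paper justifies it by noting that for $p$ odd there is a normal $K\cong C_p\times C_p$, hence a $G$-invariant component splitting off the constants equivariantly, and it also follows from the fact that $|G|$ is odd, so taking $G$-fixed points is exact on $\Ff_2[G]$-modules.
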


\begin{proof}  
By Theorem \ref{thm:Rhetorical}, the map $r^{D_t}_G$ is injective and the obstruction group is isomorphic 
to the reduced cohomology group $\widetilde H^0 (\cA_{\geq 2} (G) / G; \Ff_2 )$.   
Note that  $$H^0 (\cA_{\geq 2} (G) / G; \Ff_2 ) 
\cong H^0 (\cA_{\geq 2} (G); \Ff_2 ) ^G$$ by the definition of the 0-th cohomology. When $p$ 
is odd, there is always a normal subgroup $K \cong C_p \times C_p$, therefore there is a $G$-invariant component
in $\cA_{\geq 2} (G)$. Hence the isomorphism above induces an isomorphism of reduced cohomology classes 
$$\widetilde H^0 (\cA_{\geq 2} (G)/G; \Ff _2 ) \cong \widetilde H^0 (\cA_{\geq 2} (G); \Ff _2 ) ^G.$$ 
\end{proof}

For the dual of the rational representation ring $R_{\Qq} ^*$ we have a similar calculation. For this calculation we 
do not assume that $p$ is odd. If $H$ is a nontrivial cyclic group or a quaternion group, then 
$\partial R_{\Qq} ^* (H) \cong \Zz$. Hence as a direct consequence  of Theorem \ref{thm:Rhetorical}, 
we obtain the following.

\begin{pro}[Co\c skun \cite{Coskun-GlueBS}]\label{pro:ObsR_Q^*} 
If $G$ is $p$-group with $\rk(G) \geq 2$, then
$$\obs (R_{\Qq}^*(G))\cong \widetilde H^0 (\cA_{\geq 2}(G)/G ; \Zz).$$ 
\end{pro}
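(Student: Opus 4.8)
The plan is to read this off Theorem \ref{thm:Rhetorical}. There are really only two ingredients to check: that $R_{\Qq}^*$ is a rhetorical $p$-biset functor, and that $\partial R_{\Qq}^*(H) \cong \Zz$ for every nontrivial cyclic group $H$ and every generalized quaternion $2$-group $H$. Once both are in place, the ``in particular'' clause of Theorem \ref{thm:Rhetorical}, applied with the fixed abelian group $A = \Zz$, gives $\obs(R_{\Qq}^*(G)) \cong \widetilde H^0(\cA_{\geq 2}(G)/G; \Zz)$ for every $p$-group $G$ with $\rk(G)\ge 2$; when $\rk(Z(G))\ge 2$ both sides vanish, the right-hand side because $\cA_{\geq 2}(G)/G$ is then contractible and the left-hand side as in Proposition \ref{pro:centralK}.

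That $R_{\Qq}^*$ is rhetorical I would cite rather than reprove: $R_{\Qq}$ is the motivating example of a rhetorical biset functor, since the linearization map $\lin_{\Qq}$ factors through it and hence $\ker\lin_{\Qq}$ acts as zero; by \cite[Lemma 3.2]{Barker-Rhetorical} sub- and quotient functors of rhetorical functors are again rhetorical, so $R_{\Qq}^*$, which sits in the exact sequences displayed above in this section, is rhetorical as well.

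The one genuine computation is the identification of $\partial R_{\Qq}^*(H)$ (already recorded in the paper just before the statement). I would first treat $R_{\Qq}$ itself: the submodule of faithful elements $\partial R_{\Qq}(H) = f_1^H R_{\Qq}(H)$ is the $\Zz$-span of the classes of the faithful irreducible $\Qq H$-modules. A cyclic $p$-group $C_{p^n}$ with $n\ge 1$ has exactly one such (the one with character field $\Qq(\zeta_{p^n})$), and a generalized quaternion group has exactly one faithful irreducible rational representation as well, so $\partial R_{\Qq}(H)\cong \Zz$ in both cases; passing to the $\Zz$-dual, and using that $f_1^H$ is fixed under the duality of bisets (the opposite biset of an inflation is the corresponding deflation, so $e_N^H$ and hence $f_1^H$ are self-dual), one gets $\partial R_{\Qq}^*(H)\cong (\partial R_{\Qq}(H))^* \cong \Zz$. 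Finally, since $\rk(G)\ge 2$, each group $C_G(S)/S$ occurring in the direct sum of Theorem \ref{thm:Rhetorical} is cyclic or generalized quaternion by Lemma \ref{lem:pgroups}(c), so every summand $\partial R_{\Qq}^*(C_G(S)/S)$ equals $\Zz$ and the formula of Theorem \ref{thm:Rhetorical} collapses to the asserted isomorphism. The only non-formal point is thus the representation-theoretic count of faithful irreducible rational representations of cyclic and generalized quaternion $p$-groups together with the self-duality of $f_1^H$; both are standard, and everything else is a direct appeal to Theorem \ref{thm:Rhetorical}.
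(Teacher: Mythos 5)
Your proposal is correct and follows exactly the paper's route: the result is read off from the ``in particular'' clause of Theorem \ref{thm:Rhetorical} with $A=\Zz$, the only input being that $R_{\Qq}^*$ is rhetorical and that $\partial R_{\Qq}^*(H)\cong\Zz$ for $H$ nontrivial cyclic or generalized quaternion. The paper simply asserts this last fact, whereas you justify it via the uniqueness of the faithful irreducible rational representation of such $H$ and the self-duality of the idempotent $f_1^H$; that added detail is sound and otherwise the two arguments coincide.
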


This is slightly different than the isomorphism given in \cite[Theorem 4.4]{Coskun-GlueBS}. There the obstruction 
group $\obs(R_{\Qq} ^* )$ is given as isomorphic to $\widetilde H^0 (\cA_{\geq 2} (G) ; p\Zz)^G$ which is isomorphic 
to $\widetilde H^0 (\cA_{\geq 2} (G) /G; \Zz)$ in all cases except when $G$ is a rank 2 Roquette group, i.e., when 
$G$ is dihedral or semi-dihedral group. In this case we believe the isomorphism in Proposition \ref{pro:ObsR_Q^*} 
gives the right answer. Note that since the rational representation ring functor $R_{\Qq}$ also has the property 
$\partial R_{\Qq} (H)\cong \Zz$ for all $H$ that is cyclic or isomorphic to a generalized quaternion group, we also have
$$\obs (R_{\Qq}(G))\cong \widetilde H^0 (\cA_{\geq 2}(G)/G ; \Zz)$$
for every $p$-group $G$ with $\rk(G) \geq 2$. 

Using Theorem \ref{thm:Rhetorical} we can also calculate the obstruction group $\obs(D^{\Omega} _t (G))$ when 
$G$ is a $2$-group. 

\begin{pro}\label{pro:ObsDt} 
Let $G$ be a $2$-group with $\rk (G)\geq 2$. Let $\cS$ denote the set of conjugacy class representatives 
of cyclic subgroups $S$ such that $S\times Z$ belongs to $\cA_{\geq 2} (G) -\cB(G)$. 
Then $\ker r^{D_t} _G=0$ and $$\obs (D_t ^{\Omega} (G)) \cong \bigoplus \limits _{S \in \cS} \Zz/ n_S \Zz$$ 
where $$n_S=\begin{cases}  1 & \text{  if  } \ C_G(S)/S\cong C_2 \\ 2 & \text{  if  } \ C_G(S)/S\cong C_{2^n}, 
\ n\geq 2 \\ 4 & \text{  if  } \ C_G(S)/S\cong Q_{2^n}, \ n\geq 3.\\
\end{cases} $$
\end{pro}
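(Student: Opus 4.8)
The plan is to apply Theorem~\ref{thm:Rhetorical} directly to $F = D^{\Omega}_t$. Since $p = 2$ is a prime and $D^{\Omega}_t$ is a rhetorical $p$-biset functor (being a quotient functor of $R_{\Qq}^*$, as noted in the commutative diagram above), for any $2$-group $G$ with $\rk(G) \geq 2$ Theorem~\ref{thm:Rhetorical} gives $\ker r^{D_t}_G = 0$ and an isomorphism
$$\obs(D^{\Omega}_t(G)) \cong \bigoplus_{S \in \cS} \partial D^{\Omega}_t(C_G(S)/S),$$
where $\cS$ runs over conjugacy class representatives of cyclic subgroups $S$ with $S \times Z \in \cA_{\geq 2}(G) - \cB(G)$. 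The content of Theorem~\ref{thm:Rhetorical} (more precisely, its proof and Lemma~\ref{lem:pgroups}(c)) is that for each such $S$, the quotient $L = C_G(S)/S$ is cyclic or generalized quaternion. So the proof reduces entirely to computing $\partial D^{\Omega}_t(L)$ for $L \in \{C_{2^n}, Q_{2^n}\}$, i.e.\ identifying it with $\Zz/n_S\Zz$ in the three cases listed.

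The key step, therefore, is the computation of the faithful part $\partial D_t(L)$ of the torsion Dade group for Roquette $2$-groups. First I would record that $\partial D_t(L) = \bigcap_{1 < N \nor L} \ker(\defl^L_{L/N})$; for $L$ cyclic or quaternion this intersection is governed by the unique minimal normal (central) subgroup $Z$ of order $2$, so $\partial D_t(L) = \ker(\defl^L_{L/Z} : D_t(L) \to D_t(L/Z))$. Then I would invoke the known structure of $D_t$ (equivalently $D^{\Omega}_t$) on small $2$-groups from Bouc's work (\cite{Bouc-Book}, Chapter~12, and \cite{BoucThev-Glue}): for $L = C_2$ one has $D_t(C_2) = 0$, so $\partial D_t(C_2) = 0$, giving $n_S = 1$; for $L = C_{2^n}$ with $n \geq 2$ one has $\partial D_t(C_{2^n}) \cong \Zz/2$ generated by $\Omega_{L/Z} - \Omega_{L/1}$ or the appropriate relative syzygy class, giving $n_S = 2$; and for $L = Q_{2^n}$ with $n \geq 3$ one has $\partial D_t(Q_{2^n}) \cong \Zz/4$, generated by a class of order $4$, giving $n_S = 4$. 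Each of these is a citation to the explicit determination of $D_t$ on Roquette groups rather than a fresh computation.

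I expect the main obstacle to be purely bookkeeping: making sure the indexing set $\cS$ in the statement matches exactly the indexing set appearing in Theorem~\ref{thm:Rhetorical} (conjugacy classes of cyclic $S$ with $S \times Z$ in $\cA_{\geq 2}(G) - \cB(G)$), and that for every such $S$ the group $L = C_G(S)/S$ is indeed one of $C_2$, $C_{2^n}$ ($n \geq 2$), or $Q_{2^n}$ ($n \geq 3$) — this is where Lemma~\ref{lem:pgroups}(c) is used, and one must check that the dihedral and semidihedral possibilities cannot occur here because those would contribute subgroups of $\cA_{\geq 2}$ of rank $\geq 2$ forcing $E = S \times Z$ into the big component $\cB(G)$, contradicting $E \notin \cB(G)$. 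A small subtlety is that $L = 1$ (i.e.\ $C_G(S) = S$) does not arise since $E = S \times Z \leq C_G(S)$ forces $Z \leq C_G(S)$, so $L$ always contains the nontrivial central quotient image of $Z$; hence the case list is exhaustive and the formula for $n_S$ follows. Assembling the direct sum then yields the stated isomorphism.
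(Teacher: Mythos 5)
Your proposal is correct and follows essentially the same route as the paper: the paper's proof is a one-line application of Theorem~\ref{thm:Rhetorical} to $F=D_t^{\Omega}$ combined with the known computations of the Dade groups of cyclic and generalized quaternion $2$-groups (citing \cite[Corollary 12.10.3]{Bouc-Book} and \cite[Proposition 12.2]{Dade}), which yield exactly the values $\partial D_t(C_2)=0$, $\partial D_t(C_{2^n})\cong\Zz/2$ for $n\geq 2$, and $\partial D_t(Q_{2^n})\cong\Zz/4$. Your additional remarks on why $C_G(S)/S$ must be cyclic or generalized quaternion and on identifying $\partial D_t(L)$ with $\ker\defl^L_{L/Z}$ are correct but already built into Theorem~\ref{thm:Rhetorical} and Lemma~\ref{lem:pgroups}(c).
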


\begin{proof} 
This follows from Theorem \ref{thm:Rhetorical} and from the computations of the Dade groups of 
abelian groups and generalized quaternion groups (see \cite[Corollary 12.10.3]{Bouc-Book} and 
\cite[Proposition 12.2]{Dade}). 
\end{proof}

Note that the calculation given above is consistent with the earlier calculations done by Co\c skun 
\cite[Theorem 5.1]{Coskun-GlueBS} for the Borel-Smith functor $C_b$. To see the connection between 
these two calculations, consider the long exact cohomology sequence for the coefficient sequence 
$0 \to C_b \to R_{\Qq}^* \to D^{\Omega}_t\to 0$ given as follows:
$$\xymatrixcolsep{.8pc}
\xymatrix{\cdots \ar[r] & \widetilde H^{-1} (\cD^*_G ; D^{\Omega}_t ) \ar[r] &  
\obs( C_b(G))  \ar[r] & \obs (R_{\Qq} ^* (G)) \ar[r] & \obs( D^{\Omega } _t (G)) \ar[r] &  
H^1 (\cD_G ^* ; C_b ) \ar[r] & \cdots}$$
By Proposition \ref{pro:ObsDt} we have  $\widetilde H^{-1} (\cD_G ^*; D^{\Omega}_t)=\ker r^{D_t^{\Omega}} _G=0$, 
and $$\obs (D_t ^{\Omega} (G)) \cong \bigoplus \limits _{S \in \cS} \Zz/ n_S \Zz$$ 
where $n_S$ is defined as above. By Proposition \ref{pro:ObsR_Q^*}, we have 
$$\obs (R_{\Qq} ^* (G))\cong \widetilde H^0 (\cA_{\geq 2} (G)/G; \Zz) \cong \bigoplus \limits _{S \in \cS} \Zz.$$ 
Hence we recover the obstruction group calculation 
$\obs(C_b(G))=\widetilde H_b ^0 (\cA_{\geq 2} (G), p\Zz) ^G$ given in \cite[Theorem 5.1]{Coskun-GlueBS}.

\begin{rem}\label{rem:Surjective} 
The above discussion on the obstruction group for $C_b(G)$ gives, in particular, that the map  
$\obs (R_{\Qq} ^* (G)) \to \obs( D^{\Omega } _t (G))$, induced by the map 
$R_{\Qq}^* \to D_t ^{\Omega}$, is surjective.
\end{rem}

The rest of the section is devoted to the calculation of the obstruction group for the Dade group functor 
$D$ when $p$ is an odd prime.  In this case $D=D^{\Omega}$, so we may use the short exact sequence 
$0 \to C_b \to B^* \to D^{\Omega} \to 0$ to calculate the obstruction group. The associated long exact 
sequence gives a sequence of the following form
\begin{equation}\label{eqn:ExactSeq}
\cdots  \to \obs( B^*(G) ) \to \obs( D(G) )  \maprt{\delta} H^1 (\cD^*_G ;  C_b) \to H^1 (\cD_G ^* ; B^* ) \to \cdots .
\end{equation}
We claim that the connecting homomorphism $\delta$ is an isomorphism. To prove this we first need a lemma. 

Let $\nabla$ denote the destriction algebra and let $\rc_1^+$ be the sum of the idempotents $\rc_{H/H}$ in $\nabla$ 
as $H$ runs over all subgroups of $G$. Let $\Pi= \rc_1^+\nabla\rc_1^+$ be the truncation of $\nabla$ by $\rc_1^+$. 
The algebra $\Pi$ can be identified with the subalgebra of $\nabla$ generated by conjugations $\rc_{{}^g(H/H), H/H}^g$ 
for all $g\in G$ and $H\le G$. Moreover, it is easy to see that the algebra $\Pi$ is Morita equivalent to the 
product $\prod_{H\le_G G}\Zz [N_G(H)/H]$ of group algebras. 

\begin{lem}\label{lem:Vanish}
Let $B^*$ denote the $p$-biset functor of super class functions and let $Z$ be the $\Pi$-module 
$\prod_{H\le G}\Zz$ with the trivial $\Pi$-action. Then
\begin{enumerate}
\item $B^*\cong \coind_\Pi^\nabla Z$ as $\nabla$-modules. 
\item $\ext^1_\nabla(J,B^*) = 0.$
\end{enumerate}
\end{lem}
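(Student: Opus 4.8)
The plan is to establish part (1) first and then derive part (2) by a standard adjunction/vanishing argument. For part (1), I would identify $B^*$ with the dual of the Burnside ring functor, whose value at a subquotient $U/V$ of $G$ is the group of superclass functions, i.e. functions on conjugacy classes of subgroups of $U/V$; concretely $B^*(U/V)\cong \prod_{W\le U/V} \Zz$ modulo the conjugation identifications, so that as a $\Pi$-module its "conjugation part" is exactly $\prod_{H\le_G G}\Zz$ with trivial action once one passes to the Morita equivalence $\Pi\sim\prod_{H\le_G G}\Zz[N_G(H)/H]$. The key point is that the destriction maps $\des^{M/L}_{U/V}$ act on $B^*$ by \emph{restriction of superclass functions along the inclusion of subgroups}, which is precisely the structure map of the coinduced module $\coind_\Pi^\nabla Z=\hom_\Pi(\nabla, Z)$: a homomorphism $\nabla\to Z$ of left $\Pi$-modules, evaluated at the idempotent $\rc_{U/V}$, records a compatible family of values indexed by subgroups of $U/V$, and the destriction action is exactly precomposition. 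So I would write down the natural transformation $B^*\to \coind_\Pi^\nabla Z$ adjoint to the counit $\res^\nabla_\Pi B^*\to Z$ (projection onto the $H/H$-components), and check it is an isomorphism by evaluating at each section and comparing ranks via the above description.

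For part (2), the strategy is: since $\underline R\cong P_{(G,1)}$ is projective, $\ext^1_\nabla(J,B^*)\cong \ext^2_\nabla(T,B^*)$ by the long exact sequence of $0\to J\to\underline R\to T\to 0$, but more directly I would use $\ext^1_\nabla(J,B^*)\cong \ext^1_{\nabla^{\cC}}(J,B^*)$ where $\cC$ is the collection of sections with $V=1$ is \emph{not} the relevant one — rather, I want to exploit the coinduction from part (1). The cleanest route is the \textbf{Eckmann--Shapiro lemma}: $\ext^*_\nabla(M,\coind_\Pi^\nabla Z)\cong \ext^*_\Pi(\res^\nabla_\Pi M, Z)$. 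Applying this with $M=J$ gives $\ext^1_\nabla(J,B^*)\cong \ext^1_\Pi(\res^\nabla_\Pi J, Z)$. Now $\Pi$ is Morita equivalent to a product of group algebras $\Zz[N_G(H)/H]$, and $Z$ is the trivial module in each factor; moreover $\res^\nabla_\Pi J$ decomposes over conjugacy classes of subgroups, and on the $H$-block it is a \emph{free} (hence projective) $\Zz[N_G(H)/H]$-module — because $J$, restricted to the conjugation subalgebra, is built from the regular representations attached to the automorphism groups $N_G(U,V)/U$ of the sections, and the component landing in the $H/H$-idempotent is a permutation module on cosets that is in fact free over $N_G(H)/H$ (the stabiliser of the section $(H,1)$ inside $N_G(H)$ acting on... — this is the point to check carefully). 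Since $\ext^1$ of a projective module into anything vanishes, we conclude $\ext^1_\Pi(\res^\nabla_\Pi J,Z)=0$, hence $\ext^1_\nabla(J,B^*)=0$.

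The main obstacle I anticipate is verifying the two structural claims that make this work: (a) that the isomorphism in (1) is genuinely $\nabla$-linear, i.e. that all destriction maps in $B^*$ match precomposition in $\coind_\Pi^\nabla Z$ — this requires unwinding the definition of $\des^{M/L}_{U/V}$ on superclass functions and checking it is restriction along $W\mapsto WV/V$ type maps, which is a direct but slightly fiddly computation using \cite[Sections 6.2, 6.3]{Bouc-Book}; and (b) that $\res^\nabla_\Pi J$ is projective as a $\Pi$-module, equivalently that each of its blocks is a projective (free) module over the relevant group algebra $\Zz[N_G(H)/H]$. For (b), the natural approach is to observe that $J$ as a $\nabla^{\cC}$-module is the constant functor on the subcategory $\cD^{\cC}_G$ of sections with $V\ne 1$... no — I should instead use that the free modules $P_{(U,V)}=R\hom_{\cD_G}(-,(U,V))$ restrict to $\Pi$ as sums of coset-permutation modules, and that the particular combination giving $J$ (a subfunctor of $P_{(G,1)}$) restricts to a free module because the $(G,1)$-section has trivial "extra" automorphisms beyond $G$ itself acting freely on the relevant cosets. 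If this projectivity fails on the nose, the fallback is a base-change spectral sequence for ext-groups (as the excerpt mentions is used elsewhere), whose $E_2$-page still collapses to give the vanishing, but the direct projectivity argument is much cleaner and I would pursue it first.
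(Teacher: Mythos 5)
Your part (1) is essentially the paper's argument: evaluate $\coind_\Pi^\nabla Z$ at $K$ as $\hom_\Pi(\rc_1^+\nabla\rc_K,Z)$, note that such a homomorphism is determined by its values on the elements $\des^K_{H/H}$ for $H\le K$ up to conjugacy, and match this with superclass functions. No issue there, and your reduction of part (2) to showing $\ext^1_\Pi(\res^\nabla_\Pi J,Z)=0$ (via the adjunction, or more safely via the degree-one edge map of the base-change spectral sequence, which only requires an injection $\ext^1_\nabla(J,\hom_\Pi(\nabla,Z))\hookrightarrow \ext^1_\Pi(J,Z)$) is also the paper's reduction.

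The gap is in your claim (b). The restriction of $J$ to $\Pi$ is \emph{not} projective, and it is not ``built from regular representations.'' The algebra $\Pi$ sees only the sections $(H,H)$, and $J$ is a subfunctor of the constant functor $\underline R$, so $J(H/H)=\Zz$ with the \emph{trivial} action of $N_G(H)/H$ for every $H\neq 1$ (and $J(1/1)=0$). Under the Morita equivalence $\Pi\sim\prod_{H\le_G G}\Zz[N_G(H)/H]$, the module $\res^\nabla_\Pi J$ is therefore the direct sum of the trivial modules $\Zz$ over the nontrivial conjugacy classes, which is projective only when every $N_G(H)/H$ is trivial. So the ``clean'' route you wanted to pursue first fails on the nose, and your fallback as stated does not explain why the spectral sequence term vanishes. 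The correct reason --- and the one the paper uses --- is special to degree one and to integral coefficients:
\[
\ext^1_\Pi(J,Z)\;\cong\;\bigoplus_{1<H\le_G G}\ext^1_{\Zz[N_G(H)/H]}(\Zz,\Zz)\;=\;\bigoplus_{1<H\le_G G}H^1\bigl(N_G(H)/H;\Zz\bigr)\;=\;0,
\]
since the first cohomology of a finite group with trivial $\Zz$-coefficients is $\hom(N_G(H)/H,\Zz)=0$. Note that the analogous $\ext^2_\Pi(J,Z)$ need not vanish, so no projectivity-type argument can work here; the statement genuinely depends on this degree-one computation. With that substitution your outline closes up and coincides with the paper's proof.
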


\begin{proof} 
By definition, 
\[
\coind_\Pi^\nabla Z = \hom_\Pi(\nabla, Z) = \hom_\Pi(\rc_1^+\nabla, Z)
\]
where we regard $\nabla$ as a $(\Pi, \nabla)$-bimodule via left and right multiplication. Thus given a subgroup
$K\le G$, we have
\[
\bigl (\coind_\Pi^\nabla Z \bigr ) (K) = \hom_\Pi(\rc_1^+\nabla\rc_K, Z).
\]
Now clearly, for any subgroup $H\le K$, there is a unique element in $\rc_1^+\nabla\rc_K$ of the form
$\des^K_{H/H}$. Hence any $f\in \hom_\Pi(\rc_1^+\nabla\rc_K, Z)$ is uniquely determined by its evaluations
$f(\des^K_{H/H})$ as $H$ runs over conjugacy classes of subgroups of $K$. In particular, we obtain a group
homomorphism
\[
\coind_\Pi^\nabla Z (K) \to B^*(K),\ \  f\mapsto (f(H/H))_{H\le K}
\]
for any subgroup $K$ of $G$. It is straightforward to check that this is a functorial isomorphism. Hence the
first part holds. 

To prove the second part, note that since there is the inclusion of rings $\Pi \subseteq \nabla$, we have the
base change spectral sequence for ext-groups (see \cite[Exercise 5.6.3]{Weibel})
\[
E_2^{p,q} = \ext_{\nabla}^p(J, \ext_\Pi^q(\nabla, Z))\Rightarrow \ext_\Pi^{p+q}(J, Z).
\]
The exact sequence in low degrees becomes
\[\xymatrix{ 0 \ar[r] &\ext^1_\nabla(J, \hom_\Pi(\nabla, Z)) \ar[r] & \ext^1_\Pi(J,Z) \ar[r] & \hom_\nabla(J,\ext_\Pi^1(\nabla,Z)) 
\ar[r] & }\]
\[\xymatrix{\ar[r] & \ext^2_\nabla(J, \hom_\Pi(\nabla, Z)) \ar[r] & \ext_\Pi^2(J,Z). }\]
By the first part, $B^* = \hom_\Pi(\nabla, Z)$, hence
it is sufficient to show that $\ext^1_\Pi(J,Z)$ vanishes. But we have
\[
\ext^1_\Pi(J,Z) = \ext^1_\Pi(\rc_1^+J,Z) = \bigoplus_{1 < H\le_G G}\ext^1_\Pi(\rc_{H/H}^+ J, c_{H/H}^+ Z)
\]
where $\rc_{H/H}^+ = \sum_{K=_G H} \rc_{K/K}$. Finally, using the above Morita equivalence and the Morita 
invariance of the ext-groups, we get for any subgroup $H$ of $G$ that $$\ext^1_\Pi(\rc_{H/H}^+ J, 
\rc_{H/H}^+ Z) \cong \ext^1_{RN_G(H)/H}(\Zz,\Zz)=H^1 (N_G(H)/H; \Zz) $$ which is zero since $N_G(H)/H$ 
is a finite group. Thus $\ext_\nabla^1(J,B^*) = 0$, as required.
\end{proof}

Using this lemma we prove the following.

\begin{pro}\label{pro:ObsCb}
Let $D$ denote the functor for the Dade group, and $C_b$ denote the functor for the group of Borel-Smith 
functions. Then
$$\obs( D(G) ) \cong H^1 (\cD_G^*; C_b).$$
\end{pro}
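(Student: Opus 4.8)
The plan is to feed the short exact sequence of $p$-biset functors $0\to C_b\to B^*\to D\to 0$ into the long exact sequence $\ext^*_{R\cD_G}(T,-)$. This sequence is exact because $p$ is odd, so $D=D^\Omega$, and we view it as a short exact sequence of $R\cD_G$-modules via the algebra map $\varphi$. Using Theorem \ref{thm:Main1} (which gives $\ext^1_{R\cD_G}(T,D)\cong\obs(D(G))$) together with Lemma \ref{lem:ExtCoh} (which gives $\ext^{n+1}_{R\cD_G}(T,F)\cong\ext^n_{R\cD_G}(J,F)\cong H^n(\cD_G^*;F)$ for $n\ge 1$), the relevant stretch of this long exact sequence is precisely the exact sequence $(\ref{eqn:ExactSeq})$ with connecting homomorphism $\delta$. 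So it suffices to show that the two terms flanking $\obs(D(G))$, namely $\obs(B^*(G))$ and $H^1(\cD_G^*;B^*)$, both vanish; then exactness forces $\delta$ to be an isomorphism, which is the assertion.

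The vanishing of $H^1(\cD_G^*;B^*)$ is immediate: by Lemma \ref{lem:ExtCoh} it equals $\ext^1_{R\cD_G}(J,B^*)=\ext^1_\nabla(J,B^*)$, which is $0$ by Lemma \ref{lem:Vanish}(2). So the only real work is to prove $\obs(B^*(G))=\ext^1_{R\cD_G}(T,B^*)=\ext^1_\nabla(T,B^*)=0$, and for this I would simply rerun the proof of Lemma \ref{lem:Vanish} with $J$ replaced by $T$. Namely, since $B^*\cong\coind_\Pi^\nabla Z=\hom_\Pi(\nabla,Z)$ by Lemma \ref{lem:Vanish}(1), the base change spectral sequence for the extension $\Pi\subseteq\nabla$ reads $E_2^{p,q}=\ext^p_\nabla(T,\ext^q_\Pi(\nabla,Z))\Rightarrow\ext^{p+q}_\Pi(T,Z)$, with bottom row $E_2^{p,0}=\ext^p_\nabla(T,B^*)$, and its five-term exact sequence yields an injection $\ext^1_\nabla(T,B^*)\hookrightarrow\ext^1_\Pi(\res^\nabla_\Pi T,Z)$. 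Now $\res^\nabla_\Pi T$ is concentrated on the single section $(1,1)$: indeed $\rc_1^+T=\bigoplus_{H\le G}\rc_{H/H}T$ and $\rc_{H/H}T=T(H/H)$ is equal to $\Zz$ when $H=1$ and $0$ otherwise, since $T$ is supported on sections with trivial bottom. Moreover on $(1,1)$ all conjugations act by the identity, so this is the trivial $\Zz G$-module $\Zz$ in the $H=1$ block of the Morita splitting $\Pi\simeq\prod_{H\le_G G}\Zz[N_G(H)/H]$. By Morita invariance of Ext we get $\ext^1_\Pi(\res^\nabla_\Pi T,Z)\cong\ext^1_{\Zz G}(\Zz,\Zz)=H^1(G;\Zz)=\hom(G,\Zz)=0$ because $G$ is finite. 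Hence $\ext^1_\nabla(T,B^*)=0$, and $\delta$ is an isomorphism.

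The only nonformal ingredient is the last vanishing; everything else is a diagram chase through the identifications already established in Sections \ref{sect:ObsGluing}--\ref{sect:Reduction}. Within that ingredient the one point requiring care is the computation of $\res^\nabla_\Pi T$: one must remember that the idempotents $\rc_{H/H}$ cutting out $\Pi$ are indexed by the \emph{trivial} subquotients $(H,H)$, so that the constant-on-$\{V=1\}$ functor $T$ restricts to a module living only in the $H=1$ block, after which the argument collapses to the vanishing of $H^1$ of a finite group with trivial integral coefficients, exactly as in the proof of Lemma \ref{lem:Vanish}.
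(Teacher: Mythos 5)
Your proposal is correct, and its skeleton is exactly the paper's: run the long exact sequence $(\ref{eqn:ExactSeq})$ coming from $0\to C_b\to B^*\to D\to 0$ and kill the two terms flanking $\obs(D(G))$, the term $H^1(\cD_G^*;B^*)$ vanishing by Lemma \ref{lem:Vanish}(2) via Lemma \ref{lem:ExtCoh}. The one place you diverge is the other vanishing: the paper simply quotes Co\c skun's computation $\obs(B^*(G))=0$ from \cite[Theorem 3.1]{Coskun-GlueBS}, whereas you reprove it internally by rerunning the coinduction/base-change argument of Lemma \ref{lem:Vanish} with $T$ in place of $J$. Your block computation is right: $T$ restricted to $\Pi$ lives only in the $H=1$ block (since $T$ is supported on sections $(U,V)$ with $V=1$ and the idempotents cutting out $\Pi$ correspond to sections $(H,H)$), where it is the trivial $\Zz G$-module, so the five-term sequence gives $\ext^1_\nabla(T,B^*)\hookrightarrow\ext^1_\Pi(T,Z)\cong H^1(G;\Zz)=0$. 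This is a genuine (if modest) gain in self-containedness: it shows both vanishings needed in the proof are instances of a single spectral-sequence computation, at the cost of a slightly longer argument than the paper's one-line citation.
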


\begin{proof} By \cite[Theorem 3.1]{Coskun-GlueBS}, we have $\obs(B^* (G)) = 0$.  
By the long exact sequence given in Equation (\ref{eqn:ExactSeq}), it is enough to show that $H^1 (\cD_G ^*; B^*)=0$. 
By Lemma \ref{lem:ExtCoh}, we have 
\[
H^1 (\cD_G^*; B^*)\cong \ext^1_{R\cD_G} (J,B^*)
\]
hence the result follows from the Lemma \ref{lem:Vanish}.
\end{proof}

In Bouc \cite[Theorem 2.15]{Bouc-Glue}, it is shown that the obstruction group $\obs(D(G))$ for a $p$-group $G$, 
when $p$ is odd, embeds into the group 
$H^1 (\cA_{\geq 2} (G) ; \Zz) ^{(G)}$ which is defined as the quotient group $Z^1 / B^1$ where 
$Z^1$ is the group of $G$-invariant 1-cocycles in $\cA _{\geq 2} (G)$ and $B^1$ is the subgroup  
generated by the boundaries of $G$-invariant 0-chains in $\cA_{\geq 2} (G)$. We note the following
observation about this group.

\begin{lem}\label{lem:CohIsom} The group $H^1 (\cA_{\geq 2} (G) ; \Zz) ^{(G)}$ defined in \cite[Notation 2.9]{Bouc-Glue}
is isomorphic to the cohomology group $H^1 ( \cA_{\geq 2} (G) /G ; \Zz )$ where $\cA_{\geq 2} (G)/G$ denotes the orbit space
under the conjugation action.
\end{lem}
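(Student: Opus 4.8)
The plan is to identify both sides with the first cohomology of the $G$-invariant subcomplex of the simplicial cochain complex of the order complex of $\cA_{\geq 2}(G)$. Write $X$ for the nerve (order complex) of the poset $\cA_{\geq 2}(G)$, viewed as a simplicial $G$-complex via the conjugation action. The first step is to observe that this action is \emph{admissible}: if $g\in G$ stabilises a simplex of $X$, that is, a chain $E_0<E_1<\cdots<E_n$ of elementary abelian subgroups, then $g$ induces an order-preserving permutation of this finite totally ordered set, hence the identity, so $g$ fixes each $E_i$. Consequently $|X|/G$ is the geometric realisation of the quotient simplicial complex $X/G$, whose $n$-simplices are the $G$-orbits of $n$-simplices of $X$, and the canonical total order on the vertices of each simplex coming from the poset order lets us fix simplex orientations $G$-equivariantly.

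Using this, I would next identify cochain complexes: an $n$-cochain on $X/G$ with values in $\Zz$ is the same as a function on $G$-orbits of $n$-simplices of $X$, i.e.\ a $G$-invariant $n$-cochain of $X$, and the coboundary maps correspond under this identification. Hence $C^*(X/G;\Zz)\cong C^*(X;\Zz)^G$ as cochain complexes, so that $H^1(\cA_{\geq 2}(G)/G;\Zz)\cong H^1\bigl(C^*(X;\Zz)^G\bigr)$. This is the degree-$1$ analogue of the elementary fact $H^0(\cA_{\geq 2}(G)/G;\Ff_2)\cong H^0(\cA_{\geq 2}(G);\Ff_2)^G$ already used in the proof of Proposition \ref{pro:BoucThev}.

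It then remains to compute $H^1\bigl(C^*(X;\Zz)^G\bigr)$ and match it with Bouc's definition. Since the coboundary $\delta$ on $C^*(X;\Zz)$ is $G$-equivariant, the differential on $C^*(X;\Zz)^G$ is its restriction; its kernel in degree $1$ is $Z^1(X;\Zz)\cap C^1(X;\Zz)^G=Z^1(X;\Zz)^G$, the group of $G$-invariant $1$-cocycles, and its image from degree $0$ is $\delta\bigl(C^0(X;\Zz)^G\bigr)$, the subgroup generated by coboundaries of $G$-invariant $0$-cochains. Thus $H^1(\cA_{\geq 2}(G)/G;\Zz)\cong Z^1(X;\Zz)^G/\delta\bigl(C^0(X;\Zz)^G\bigr)$, which is exactly the quotient $Z^1/B^1$ defining $H^1(\cA_{\geq 2}(G);\Zz)^{(G)}$ in \cite[Notation 2.9]{Bouc-Glue}.

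The only delicate point is the cochain-level identification $C^*(X/G;\Zz)\cong C^*(X;\Zz)^G$, which genuinely requires admissibility: without it $X/G$ need not be a simplicial complex and simplex orbits may have stabilisers acting by orientation reversal, breaking the correspondence. I expect this to be the main (albeit standard) obstacle, and it is precisely what the partial order on $\cA_{\geq 2}(G)$ resolves.
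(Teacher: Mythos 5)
Your proof is correct and follows essentially the same route as the paper, which identifies the $G$-invariant cochain complex $\hom_{\Zz G}(C_*(X),\Zz)$ with $\hom_{\Zz}(C_*(X/G),\Zz)$ and reads off $H^1$ of both sides. Your explicit verification of admissibility (stabilisers of chains in the poset fix them pointwise) is a worthwhile detail that the paper leaves implicit.
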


\begin{proof} Let $X=\cA_{\geq 2} (G)$. Note that the group 
$H^1 (\cA _{\geq 2} (G); \Zz)^{(G)}$ defined above is the cohomology group at dimension 1 of the cochain complex
$$0 \to \hom _{\Zz G} ( C_0 (X), \Zz) \maprt{\delta ^0} \hom _{\Zz G} ( C_1 (X), \Zz)  \maprt{\delta^1} \hom _{\Zz G} ( C_2 (X), \Zz)  \to \cdots.$$ 
Since $$ \hom _{\Zz G} (C_* (X) , \Zz) \cong \hom _{\Zz } (C_* (X) _G , \Zz) \cong \hom _{\Zz} (C_* (X/G) , \Zz),$$
the group $H^1 (\cA _{\geq 2} (G); \Zz)^{(G)}$  is isomorphic to the cohomology group $H^1 (X /G ; \Zz )$.
\end{proof}

In general $\obs (D(G))$ is not 
isomorphic to $H^1(\cA_{\geq 2}(G) /G ; \Zz)$ (see \cite[Section 6]{Bouc-Glue}). In Theorem \ref{thm:Main3} 
we give an explicit description of the obstruction group $\obs (D(G))$ as a subgroup of 
$H^1(\cA_{\geq 2}(G) /G ; \Zz)$. Before giving the proof of Theorem \ref{thm:Main3}, we prove a useful 
technical result.

\begin{pro}\label{pro:H1Calc} Let $F$ be a rhetorical $p$-biset functor such that $\partial F(H)$ is equal to 
a fixed abelian group $A$ for every group $H$ isomorphic to a nontrivial cyclic group or a generalized 
quaternion group. Then there is an isomorphism  $$H^1 (\cD_G^*; F)\cong H^1 (\cA_{\geq 2} (G)/G ; A).$$ 
\end{pro}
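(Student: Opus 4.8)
The plan is to compute $H^1(\cD_G^*; F)$ using the Steinberg cochain complex $C^*_{\St}(F)$ from Proposition \ref{pro:Oliver} and then compare it with the Steinberg-type cochain complex computing $H^1(\cA_{\geq 2}(G)/G; A)$. First I would reduce to the case where $R$ is $p$-local: by the spectral sequence argument already used in the proof of Proposition \ref{pro:centralK}, when $G$ is a $p$-group the groups $\widetilde H^i(\cA_p; \overline F)$ for $i \le 1$ can be computed from the horizontal edge $E_1^{*,0} = C^*_{\St}(F)$, since $N_G(C)/C_G(C) = 1$ kills the $i = 0$ column and the $i = -1$ column is concentrated in degree $0$. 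So it suffices to identify $H^1$ of the complex
\[
\cdots \to \bigoplus_{C \in \cE_1} F(C_G(C)/C) \xrightarrow{\delta^0} \bigoplus_{E \in \cE_2} \hom_{\aut_G(E)}(\St_E, F(C_G(E)/E)) \xrightarrow{\delta^1} \bigoplus_{E \in \cE_3} \hom_{\aut_G(E)}(\St_E, F(C_G(E)/E)) \to \cdots.
\]

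The next step is to strip off the contractible part of $\cA_{\geq 2}(G)$ exactly as in the proof of Theorem \ref{thm:Rhetorical}. If $\rk(Z(G)) \ge 2$ both sides vanish (the orbit space is contractible, and Proposition \ref{pro:centralK} gives $H^1 = 0$ as well via the extended spectral sequence discussion), so assume $Z(G)$ is cyclic with unique central subgroup $Z$ of order $p$. Using the Hambleton--Taylor--Williams sequences (Proposition \ref{pro:Morita}) as subcomplexes of $C^*_{\St}(F)$, I would successively delete the summands indexed by cyclic subgroups $C$ with $CZ$ not maximal in $\cA_{\geq 2}(G)$, and the rank $\ge 2$ summands supported on the big component $\cB(G)$, without changing the cohomology; the induction on $|G|$ controlling the "$C \not\le H$" case is the same as in Theorem \ref{thm:Rhetorical}. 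What survives, in degrees $\le 1$, is a complex whose degree-$0$ term is $\bigoplus_{C} F(C_G(C)/C)$ over the cyclic $C$ with $C \times Z \in \cA_{\geq 2}(G) - \cB(G)$, each such $C_G(C)/C$ being cyclic or generalized quaternion by Lemma \ref{lem:pgroups}(c), together with a degree-$1$ term built from $F(C_G(CZ)/CZ)$ and the differential $F(C_G(C)/C) \to F(C_G(CZ)/CZ)$ which is the deflation $\defl$; its kernel is $\partial F(C_G(C)/C) = A$ by hypothesis, which is what produced $\obs(F(G)) = \widetilde H^0 = \bigoplus_S A$ in Theorem \ref{thm:Rhetorical}.

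Now the key point: after this reduction the surviving cochain complex in degrees $\ge 1$ should be identified, term by term and with matching differentials, with $A \otimes_{\Zz}$ (the simplicial cochain complex of $\cA_{\geq 2}(G)/G$ in degrees $\ge 1$). Concretely, for $E = CZ \in \cA_{\geq 2}(G) - \cB(G)$ maximal of rank $2$, condition (d) of Lemma \ref{lem:pgroups} says all order-$p$ subgroups of $E$ other than $Z$ are $G$-conjugate, so $\aut_G(E) = N_G(E)/C_G(E)$ acts on the $p$ non-central lines transitively up to the point that $\hom_{\aut_G(E)}(\St_E, F(C_G(E)/E)) \cong \partial F(C_G(E)/E)^{?}$ collapses to a single copy of $A$ indexed by the orbit, and the differential out of $\bigoplus_C F(C_G(C)/C)$ matches the simplicial coboundary of the quotient poset. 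For the higher components supported on $\cB(G)$ one invokes the equivariant contractibility of $|\cA_{\ge 2}(G)|$-type arguments (or Proposition \ref{pro:Constant}/Proposition \ref{pro:Grodal}) to see they contribute nothing new in degree $1$. The main obstacle — and the part requiring real care rather than routine bookkeeping — is the precise identification of the degree-$1$ differential with the simplicial coboundary map of $\cA_{\geq 2}(G)/G$ after the Steinberg-module summands have been contracted, i.e. checking that the combinatorics of the $R_E$-maps in Oliver's complex, when restricted to the isolated rank-$2$ vertices and their stabilizers, reproduce exactly the incidence data of the orbit complex with coefficients in $A$; once that is pinned down, taking $H^1$ of both complexes gives the claimed isomorphism $H^1(\cD_G^*; F) \cong H^1(\cA_{\geq 2}(G)/G; A)$.
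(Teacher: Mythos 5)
Your setup (Oliver's Steinberg complex, the spectral sequence remark, the split case $\rk(Z(G))\geq 2$, and the first rounds of cancellation via the Hambleton--Taylor--Williams sequences) matches the paper. But there is a genuine gap at the decisive step. You propose to delete the summands $F(C_G(C)/C)$ for the ``Case 2'' cyclic subgroups $C$ (those with $CZ$ in the big component $\cB(G)$ but $C\not\leq H$) ``by the same induction as in Theorem \ref{thm:Rhetorical}.'' In that theorem, however, the deletion of these terms is justified only by the injectivity of the maps $\varphi_C$, which suffices for computing $\widetilde H^0=\ker\bar\delta^0$ but does \emph{not} preserve $H^1$: removing a map that is merely injective (not part of a short exact subcomplex) changes the cohomology in degree one by exactly the cokernel data you are discarding. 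These Case 2 summands are precisely where $H^1(\cD_G^*;F)$ lives. The paper keeps them and observes that, after all the legitimate cancellations, the surviving complex is the direct sum over $C\in\cE_1''$ of the Steinberg complexes $C^*_{\St}(F)$ for the smaller groups $\overline C_G(C)=C_G(C)/C$, shifted by one degree; hence $H^1(\cD_G^*;F)\cong\bigoplus_{C}\widetilde H^0(\cD^*_{\overline C_G(C)};F)\cong\bigoplus_C\widetilde H^0\bigl(\cA_{\geq 2}(\overline C_G(C))/\overline C_G(C);A\bigr)$ by Theorem \ref{thm:Rhetorical} applied to the smaller groups. Your plan, if carried out literally, would compute $H^1$ of a complex from which these contributions have been erased.

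Your proposed endgame is also not what makes the comparison work, and you yourself flag it as the unresolved hard part. The paper does not identify the surviving complex term by term with the simplicial cochain complex of $\cA_{\geq 2}(G)/G$; the summand $\hom_{\aut_G(E)}(\St_E,F(C_G(E)/E))$ for an isolated maximal $E$ is a copy of $F(C_G(E)/E)$ (tensored with $\St_E^*$), not of $A$, so no such direct identification is available. Instead the topological side is handled by an excision argument: with $Y\subseteq X=\cA_{\geq 2}(G)$ the subposet of $E$ with $E\leq H$ or $\rk(EZ)\geq 3$, the complement $X-Y$ is the discrete set of rank-$2$ subgroups $E=CZ\not\leq H$, $Y$ is equivariantly contractible to $K$, and an equivariant version of \cite[Lemma 6.2]{BoucMazza-Extra} gives $H^1(X/G;A)\cong\bigoplus_{E\in\cE_m}\widetilde H^0(X_C/C_G(C);A)$, matching the algebraic decomposition above. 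Without both the recursive identification on the algebraic side and this excision on the topological side, the isomorphism is not established.
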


\begin{proof} First, let us assume that $\rk (Z(G))\geq 2$. Let $K\cong C_p\times C_p$ be a central subgroup 
in $G$. Then the argument in the proof of Proposition \ref{pro:centralK} gives us that the complex 
$C^* _{\St} (F)$ can be reduced to a complex of the form
\[
\xymatrixcolsep{1.1pc}\xymatrix{0 \ar[r]^-{\bar \delta^0} &    \bigoplus \limits _{E \in \cE'' _2}  \hom _{\aut_G (E)} 
(\St _E, F(C_G (E)/E))\ar[r]^-{\bar \delta ^1}& \bigoplus \limits _{E \in \cE'' _3}  \hom _{\aut_G (E)} (\St _E, F(C_G (E)/E)) 
\ar[r] & \cdots } 
\]
where $\cE_2''$ is the $G$-conjugacy classes of rank 2 elementary abelian subgroups $E$ such that $E\cap K=1$, 
and $\cE_3''$ is the $G$-conjugacy classes of rank 3 elementary abelian subgroups $E$ such that $K \not \leq E$. 
For each $E\in \cE_2 ''$, we have a short exact sequence of the form
\begin{equation}\label{eqn:E} \xymatrix{0 \ar[r] & F(C_G(E)/E) \ar[r]^-{\alpha} &  \bigoplus \limits _{i=0}^p F(C_G(E) /C_i E) 
\ar[r]^-{\beta} & \oplus _p F(C_G(E)/ EK) \ar[r] & 0.} 
\end{equation}
Note that for each $E$, the rank 3 elementary abelian subgroup $C_iE$ does not include $K$.  
This gives that the map $\bar \delta ^1$ is injective, hence $H^1(\cD_G ^*; F)=0$. Since $K$ is central, 
$\cA_{\geq 2} (G)$ is canonically contractible by equivariant homotopies $K \leq KE \geq E$. Therefore 
$H^1 (\cA_{\geq 2} (G) /G ; A)=0$, hence the isomorphism holds in this case.

Now we assume that $Z(G)$ is cyclic. We may also assume that $G$ has a normal subgroup 
$K \cong C_p\times C_p$ because otherwise $G$ is a Roquette group and by direct computation 
we can see that for Roquette groups both cohomology groups are trivial. As before let $Z$ denote 
the central subgroup of $K$ and $A_1, \dots , A_p$ denote the non-central subgroups of $K$ of order $p$.
By the argument we used in the proof of Proposition \ref{thm:Rhetorical} we can reduce the complex 
$C^* _{\St} (F)$ to a complex of the form
\[
\xymatrix{0 \ar[r]^-{\bar\delta^{-1}}&  \bigoplus \limits _{C \in \cE_1''} F(C_G(C)/C)  \ar[r]^-{\bar\delta^{0}} &    
\bigoplus \limits _{E \in \cE'' _2}  \hom _{\aut_G (E)} (\St _E, F(C_G (E)/E)) &  \\
& \ar[r]^-{\bar \delta ^1} & \bigoplus \limits _{E \in \cE'' _3}  \hom _{\aut_G (E)} (\St _E, F(C_G (E)/E)) \ar[r] & \cdots}  
\]
by removing the terms appearing in the short exact sequences of the form given in Case 1 considered in the proof of 
Theorem \ref{thm:Rhetorical}. Note that if a short exact sequence of modules appears as a subcomplex of a chain complex, 
removing the modules of this short exact sequence from the chain complex does not change the homology group of the chain complex.
Assume also that the short exact sequences of the form given in 
(\ref{eqn:E}) earlier are removed for every $E \in \cE'_2$ and $E\in \cE'_3$ satisfying 
$E \leq H=C_G(K)$. 

Finally assume that maps of the form
$$ F(C_G(C)/C) \to F(C_G(C) / CZ)$$
are also removed for all $C$ such that $CZ$ is a maximal rank 2 elementary abelian subgroup 
such that $CZ \not \in \cB (G)$. Note that these maps are surjective maps with kernel giving 
the isomorphism $\widetilde H^0 (\cD_G ^*; F)\cong \widetilde H^0 (\cA_{\geq 2} (G); A)$.  
So removing these does not change the cohomology groups of the chain complex at dimension 1.
 
What is left in $\cE_1''$ are the $G$-conjugacy classes of subgroups $C$ such that $E=CZ$ belongs 
to the big component $\cB(G)$, but $C\not \leq H$. These are the subgroups considered in Case 2 
in the proof of Theorem \ref{thm:Rhetorical}. Note that the only subgroups left in $\cE_2''$ are 
the ones that belong to the big component $\cB(G)$ and satisfy $E\not \leq H$. Recall that for 
each $C\in \cE_1''$ there is a chain complex of the form 
\[
\xymatrix{0 \ar[r]  & F(\overline C_G(C)) \ar[r] & \bigoplus \limits _{\overline D \in \mathcal{T}_1} 
F( C_{\overline C_G(C)} (\overline D) / \overline D   ) & \\  & \ar[r] &  \bigoplus \limits 
_{\overline E \in \mathcal{T}_2} \hom _{\aut_{\overline C_G(C)} (\overline E )} (\St_{\overline E} , 
F( C_{\overline C_G(C)} (\overline E) / \overline E   ) )  \ar[r] & \cdots ,}
\]
which is the chain complex $C^*_{\St} (F)$ for the group $\overline C_G(C)=C_G(C)/C$. By taking 
the direct sum of these chain complexes over all $C\in \cE_2'$, we obtain the chain complex given 
above in low dimensions with the same maps as $\bar \delta ^0$ and $\bar \delta ^1$.
From this we can conclude that 
$$H^1 (\cD_G^*; F) \cong \bigoplus \limits _{C \in \cE_1''} \widetilde H^0 (\cD^*_{\overline C_G(C)} ; F) 
\cong \bigoplus \limits _{C \in \cE_1''} \widetilde H^0 (\cA_{\geq 2} \bigl ( \overline C_G(C) \bigr ) /\overline C_G(C); A).$$
Note that the group $\overline C_G(C)$ has $\overline Z=CZ/C$ as a central subgroup, and the poset 
$\cA_{\geq 2} ( \overline C_G(C) ) $ is equivariantly homotopy equivalent to the poset of elementary 
abelian subgroups $\overline E$ of $\overline C_G(C)$ such that $\overline Z < \overline E$. 
Let $X_C$ denote the poset of elementary abelian subgroups $E\in \cA_{\geq 2} (G)$ such that $E>CZ$. 
We have $$\widetilde H^0 (\cA_{\geq 2} \bigl ( \overline C_G(C) \bigr ) /\overline C_G(C); A) \cong \widetilde 
H^0 (X_C/ C_G(C) ; A).$$ Also note that there is a 1-1 correspondence between $G$-conjugacy classes 
of $C \not  \leq H$ and $G$-conjugacy classes of subgroups $E=CZ$ with $E \not \leq H$.  This is because 
if $C,C' \leq CZ$ then $C$ and $C'$ are $G$-conjugate (note that if $K=\langle a, z \rangle$, then  
$[c,a]=cac^{-1}a^{-1}=z$, so $aca^{-1}= z^{-1}c$).  We conclude that 
$$H^1 (\cD_G^*; F)  \cong  \bigoplus \limits _{E\in \cE_m} \widetilde H^0 ( X_C/ C_G(C); A),$$ 
where $\cE_m$ is the set of $G$-conjugacy class representatives of elementary abelian subgroups 
of the form $CZ$ such that $C\not \leq H$ and $CZ$ belongs to the big component of $\cA_{\geq 2} (G)$. 
In fact, if it does not belong to the big component, then $CZ$ is maximal and hence we would have 
$X_C=\emptyset$, so we can actually drop this assumption for the above sum and take $\cE_m$ 
as the set of $G$-conjugacy class representatives of rank 2 elementary abelian subgroups $E$ 
such that $Z\leq E$ and $E \not \leq H$.
 
Let $X=\cA_{\geq 2} (G)$ and $Y\subseteq X$ denote the subposet consisting of all $E\in X$ such 
that either $E\leq H$ or $\rk (EZ)\geq 3$.  It is easy to see that $X-Y$ is a discrete set of rank 2 
elementary abelian subgroups $E$ such that $Z\leq E$ and $E \not \leq H$.  Note that the poset 
$Y$ is equivariantly contractible to the point $\{K\}$, so we can now apply an equivariant version 
of \cite[Lemma 6.2]{BoucMazza-Extra} to obtain 
$$H^1 (X/G ; A) \cong \bigoplus \limits _{E \in \cE_m } \widetilde H^0 (X_C / C_G(C) ; A).$$
Together with the earlier isomorphism, we conclude that $H^1 (\cD_G; F)$ is isomorphic 
to $H^1 (X/G; A).$
\end{proof}

Now we are ready to prove Theorem \ref{thm:Main3} stated in the introduction.

\begin{thm}\label{thm:ObsD} Let $G$ be a $p$-group with $p$ odd. Let $D$ denote the $p$-biset 
functor of Dade group. Then there is an exact sequence  of abelian groups
$$ 0 \to \obs(D(G)) \to H^1 (\cA_{\geq 2} (G) /G ; \Zz) \to H^1 (\cA_{\geq 2} (G)/G; \Zz/2)$$
where the second map is induced by the mod $2$ reduction map $\Zz \to \Zz/2$.
\end{thm}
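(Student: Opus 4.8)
The plan is to deduce the theorem from the identification $\obs(D(G))\cong H^1(\cD_G^*;C_b)$ of Proposition~\ref{pro:ObsCb} (recall $D=D^\Omega$ as $p$ is odd) by comparing the three functors $C_b$, $R_{\Qq}^*$, and $D_t:=D_t^\Omega$ that occur in the short exact sequence $0\to C_b\to R_{\Qq}^*\to D_t\to 0$ of $p$-biset functors from \cite{BoucYalcin}. If $\rk(G)\le 1$ then $G$ is cyclic (since $p$ is odd), $\cA_{\geq 2}(G)$ is empty, and all three terms of the asserted sequence vanish, so I may assume $\rk(G)\ge 2$. First I would invoke the long exact sequence of the reduced cohomology $\widetilde H^*(\cD_G^*;-)$ — equivalently of $\ext^*_{R\cD_G}(T,-)$ — attached to this short exact sequence of coefficients; it is honestly long exact because $\widetilde H^*(\cD_G^*;-)$ is computed by applying $\hom_{R\cD_G}(P_*,-)$ to a fixed projective resolution $P_*$ of $T$, so a short exact sequence of coefficients yields a short exact sequence of cochain complexes. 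Using $\obs(F(G))\cong\widetilde H^0(\cD_G^*;F)$, the relevant portion reads
\[
\obs\bigl(R_{\Qq}^*(G)\bigr)\to\obs\bigl(D_t(G)\bigr)\to H^1(\cD_G^*;C_b)\xrightarrow{\ \alpha\ } H^1(\cD_G^*;R_{\Qq}^*)\xrightarrow{\ \beta\ } H^1(\cD_G^*;D_t).
\]
By Remark~\ref{rem:Surjective} the first arrow is surjective, so the connecting map into $H^1(\cD_G^*;C_b)$ is zero, $\alpha$ is injective, and Proposition~\ref{pro:ObsCb} converts this into an exact sequence $0\to\obs(D(G))\to H^1(\cD_G^*;R_{\Qq}^*)\xrightarrow{\ \beta\ } H^1(\cD_G^*;D_t)$.

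Next I would compute the last two groups. Since $p$ is odd every Roquette $p$-group is cyclic, and for a nontrivial cyclic group $H$ one has $\partial R_{\Qq}^*(H)\cong\Zz$ and $\partial D_t(H)\cong\Zz/2$; moreover $R_{\Qq}^*$ and $D_t$ are rhetorical $p$-biset functors. Hence Proposition~\ref{pro:H1Calc} applies to each of them and gives $H^1(\cD_G^*;R_{\Qq}^*)\cong H^1(\cA_{\geq 2}(G)/G;\Zz)$ and $H^1(\cD_G^*;D_t)\cong H^1(\cA_{\geq 2}(G)/G;\Zz/2)$. Substituting these into the exact sequence above produces an exact sequence $0\to\obs(D(G))\to H^1(\cA_{\geq 2}(G)/G;\Zz)\to H^1(\cA_{\geq 2}(G)/G;\Zz/2)$.

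The remaining, and really the only, obstacle is to show that under these identifications $\beta$ becomes the map induced by the reduction homomorphism $\Zz\to\Zz/2$. The starting point is easy: applying the faithful idempotent $f_1^H$ to the exact sequence $0\to C_b(H)\to R_{\Qq}^*(H)\to D_t(H)\to 0$ shows that $0\to\partial C_b(H)\to\partial R_{\Qq}^*(H)\to\partial D_t(H)\to 0$ is exact, so that for $H$ a nontrivial cyclic group the map $\partial R_{\Qq}^*(H)\to\partial D_t(H)$ is precisely the surjection $\Zz\to\Zz/2$ (and $\partial C_b(H)\cong\Zz$ embeds as $2\Zz$). To propagate this to $\beta$ I would not quote Proposition~\ref{pro:H1Calc} as a black box but instead re-run the reduction of the cochain complex $C^*_{\St}(-)$ used in its proof simultaneously for $C_b$, $R_{\Qq}^*$, and $D_t$: the short exact sequence of functors induces a short exact sequence of these complexes, each reduction step (deleting the subcomplexes coming from Proposition~\ref{pro:Morita}, contracting the contractible posets, peeling off the surjections whose kernels are the groups $\partial F\bigl(C_G(C)/C\bigr)$) is natural in the coefficient functor, and the complexes that finally compute $H^1$ have terms of the shape $\bigoplus_{C}\partial F\bigl(C_G(C)/C\bigr)$, on which $\beta$ is visibly the mod $2$ map described above. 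Once this bookkeeping is carried out, the exact sequence of the previous paragraph is exactly the assertion of the theorem; the last map is induced by $\Zz\to\Zz/2$ as claimed.
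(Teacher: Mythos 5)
Your proposal is correct and follows essentially the same route as the paper: identify $\obs(D(G))$ with $H^1(\cD_G^*;C_b)$ via Proposition \ref{pro:ObsCb}, run the long exact sequence for $0\to C_b\to R_{\Qq}^*\to D_t\to 0$, use the surjectivity from Remark \ref{rem:Surjective}, and apply Proposition \ref{pro:H1Calc} to both $R_{\Qq}^*$ and $D_t$. Your extra paragraph checking that the induced map is really the mod~$2$ reduction (by tracing the naturality of the reductions of $C^*_{\St}(-)$ and the surjection $\partial R_{\Qq}^*(H)\to\partial D_t(H)$) supplies a detail the paper leaves implicit, and is a welcome addition rather than a deviation.
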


\begin{proof} By  Proposition \ref{pro:ObsCb}, $\obs(D(G)) \cong H^1 (\cD_G ^* ; C_b)$. Consider 
the long exact cohomology sequence associated to the short exact sequence $0 \to C_b \to 
R_{\Qq}^* \to D_t ^{\Omega} \to 0$. Note that since $p$ is odd we have $D_t ^{\Omega} =D_t$. 
This gives an exact sequence of the form
$$\cdots \to \obs (R_{\Qq}^* (G))  \maprt{\pi^*} \obs (D_t (G)) \to H^1 (\cD^*_G ; C_b)  \to 
H^1 (\cD^*_G ;  R_{\Qq} ^* ) \to H^1 (\cD_G ^* ; D_t ) \to \cdots $$
We have shown earlier that the map $\pi^*$ is surjective (see Remark \ref{rem:Surjective}). 
Hence we obtain a short exact sequence of abelian groups of the form
$$ 0 \to \obs (D(G))  \to H^1 (\cD^*_G ;  R_{\Qq} ^* ) \to H^1 (\cD_G ^* ; D_t ).$$ 
Note that $\partial R_{\Qq} ^* (H)\cong \Zz$ and $\partial D_t (H) \cong \Zz/2$ for every 
$H$ that is cyclic or isomorphic to a generalized quaternion group.  Hence the proof follows 
from Proposition \ref{pro:H1Calc}.
\end{proof}
 
We conclude with the following.
 
\begin{pro} Let $G$ be a $p$-group with $p$ odd. If $D_f$ denotes the quotient functor $D/D_t$, 
then $$\obs (D_f (G))\cong H^1 (\cA _{\geq 2} (G) /G ; \Zz ).$$ 
\end{pro}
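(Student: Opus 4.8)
The plan is to realize $D_f$ as a cokernel in a short exact sequence of $p$-biset functors and then feed it into the long exact sequence for the reduced cohomology of $\cD_G^{*}$, exactly as in the proof of Theorem~\ref{thm:ObsD} but shifted up by one cohomological degree. First I would note that since $p$ is odd we have $D=D^{\Omega}$ and $D_t=D_t^{\Omega}$, so $D_f=D/D_t=D^{\Omega}/D_t^{\Omega}$. In the commutative diagram of $p$-biset functors displayed at the start of Section~\ref{sect:GluingEndo} both rows and the first and third columns are short exact, so by the $3\times 3$ lemma the middle column is short exact; that is, there is a short exact sequence of $p$-biset functors
$$0 \to R_{\Qq}^{*} \to B^{*} \to D_f \to 0,$$
which restricts, for the fixed $p$-group $G$, to a short exact sequence of $R\cD_G$-modules.

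Next I would pass to reduced cohomology. Since the collection $\cC^{*}$ of all sections with $V\neq 1$ is closed under taking subsections, one has $\widetilde H^{n}(\cD_G^{*};-)\cong\ext^{n+1}_{R\cD_G}(T,-)$ for $n\geq -1$ (as noted after Definition~\ref{defn:Reduced}); combined with Theorem~\ref{thm:Main1} this gives $\widetilde H^{0}(\cD_G^{*};F)\cong\obs(F(G))$, and combined with Lemma~\ref{lem:ExtCoh} it gives $\widetilde H^{1}(\cD_G^{*};F)\cong\ext^{1}_{R\cD_G}(J,F)$. The $\ext_{R\cD_G}(T,-)$-long exact sequence of the short exact sequence above therefore contains the segment
$$\obs(B^{*}(G)) \to \obs(D_f(G)) \to H^{1}(\cD_G^{*};R_{\Qq}^{*}) \to H^{1}(\cD_G^{*};B^{*}).$$
By \cite[Theorem~3.1]{Coskun-GlueBS} we have $\obs(B^{*}(G))=0$, and by Lemma~\ref{lem:Vanish} we have $H^{1}(\cD_G^{*};B^{*})\cong\ext^{1}_{R\cD_G}(J,B^{*})=0$. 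Hence the segment collapses to an isomorphism $\obs(D_f(G))\cong H^{1}(\cD_G^{*};R_{\Qq}^{*})$.

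Finally I would identify the right-hand side using Proposition~\ref{pro:H1Calc}. The functor $R_{\Qq}^{*}$ is a rhetorical $p$-biset functor with $\partial R_{\Qq}^{*}(H)\cong\Zz$ for every nontrivial cyclic group and every generalized quaternion group $H$ (as already used in the proof of Proposition~\ref{pro:ObsR_Q^*}), so Proposition~\ref{pro:H1Calc} applies with $A=\Zz$ and gives $H^{1}(\cD_G^{*};R_{\Qq}^{*})\cong H^{1}(\cA_{\geq 2}(G)/G;\Zz)$. Chaining the two isomorphisms yields the statement; when $\rk(G)\leq 1$, that is, when $G$ is cyclic, both sides vanish and the claim is trivial.

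All the needed ingredients are already available, so the argument is largely bookkeeping. The only points requiring some care are verifying the exactness of the middle column of the diagram (an application of the $3\times 3$ lemma) and tracking the reduced-cohomology degrees $\widetilde H^{-1},\widetilde H^{0},\widetilde H^{1}$ against the $\ext$-groups of $T$ and $J$; I do not foresee a genuine obstacle.
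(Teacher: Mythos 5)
Your proposal is correct and follows essentially the same route as the paper: the short exact sequence $0 \to R_{\Qq}^* \to B^* \to D_f \to 0$ from the diagram in Section~\ref{sect:GluingEndo}, the vanishing of $\obs(B^*(G))$ and of $H^1(\cD_G^*;B^*)$ via Lemma~\ref{lem:Vanish}, and then Proposition~\ref{pro:H1Calc} applied to $R_{\Qq}^*$ with $A=\Zz$. The extra care you take (the $3\times 3$ lemma and the degree bookkeeping) is consistent with, though not spelled out in, the paper's argument.
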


\begin{proof} The exact sequence $0 \to R_{\Qq} ^* \to B^* \to D/ D_t \to 0$ gives a long exact sequence
$$\cdots  \to \obs( B^*(G) ) \to \obs( D_f(G) )  \maprt{\delta} H^1 (\cD^*_G ;  R_{\Qq} ^*) \to H^1 (\cD_G ^* ; B^* ) 
\to \cdots .$$ By \cite[Theorem 3.1]{Coskun-GlueBS}, we have $\obs(B^* (G)) = 0$, and by Lemma \ref{lem:ExtCoh} 
and \ref{lem:Vanish}, we have 
\[
H^1 (\cD_G^*; B^*)\cong \ext^1_{R\cD_G} (J,B^*)=0,
\]
hence we obtain $\obs(D_f(G))\cong H^1 (\cD_G ^* ; R_{\Qq} ^* ) $. Now the result follows from Proposition 
\ref{pro:H1Calc} and from the fact that $\partial R_{\Qq} ^* (H)\cong \Zz$ for every $H$ that is cyclic or 
isomorphic to a generalized quaternion group. 
\end{proof}

\end{document}